\numberwithin{equation}{section}
\newtheorem{theorem}{Theorem}[section]
\newtheorem{proposition}[theorem]{Proposition}
\newtheorem{lemma}[theorem]{Lemma}
\newtheorem{remark}{Remark}[section]
\newtheorem{example}{Example}[section]
\newcommand{\OMIT}[1]{{\bf [OMIT:} #1 \ {\bf --- end OMIT] }}  
   \renewcommand{\OMIT}[1]{}            
\newcommand{\RR}{{\mathbb{R}}}
\newcommand{\ZZ}{{\mathbb{Z}}}
\newcommand{\veczero}{{\bf 0}}
\newcommand{\dom}{{\rm dom\,}}
\newcommand{\unitvec}[1]{e^{#1}}
\newcommand{\unitvecfirst}{e}
\newcommand{\argmin}{{\rm argmin\,}}
\newcommand{\convZ}{\Box\,}
\newcommand{\finbox}{\hspace*{\fill}$\rule{0.2cm}{0.2cm}$}
\newcommand{\todaye}{\the\year/\the\month/\the\day}
\begin{document}

\title{Projection and Convolution Operations for \\
Integrally Convex Functions
\thanks{
This work was supported by The Mitsubishi Foundation, CREST, JST, 
Grant Number JPMJCR14D2, Japan, and
JSPS KAKENHI Grant Numbers 26350430, 26280004, 17K00037.}
}

\author{
Satoko Moriguchi%
\thanks{Department of Economics and Business Administration,
Tokyo Metropolitan University, 
satoko5@tmu.ac.jp}
\ and 
Kazuo Murota%
\thanks{Department of Economics and Business Administration,
Tokyo Metropolitan University, 
murota@tmu.ac.jp}
}

\date{October, 2017; Version \today}

\maketitle

\begin{abstract}
This paper considers projection and convolution operations for 
integrally convex functions,
which constitute a fundamental function class in discrete convex analysis.
It is shown that the class of integrally convex functions
is stable under projection, and this is also the case with
the subclasses of integrally convex functions
satisfying local or global discrete midpoint convexity.
As is known in the literature, 
the convolution of two integrally convex functions
may possibly fail to be integrally convex.
We show that the convolution of an integrally convex function
with a separable convex function remains integrally convex.
We also point out in terms of examples
that the similar statement is false for
integrally convex functions
with local or global discrete midpoint convexity.
\end{abstract}

{\bf Keywords}:
Discrete convex analysis,  Integrally convex function, 
 Minkowski sum,  Infimal convolution, Integer programming

\section{Introduction}
\label{SCintro}

In discrete convex analysis \cite{Mdcasiam,Mbonn09,Mdcaeco16},
a variety of discrete convex functions are considered.
Among others, integrally convex functions,
due to Favati--Tardella \cite{FT90},
constitute a common framework for discrete convex functions.
A function $f: \ZZ^{n} \to \RR \cup \{ +\infty \}$
is called integrally convex if its local convex extension 
$\tilde{f}: \RR^{n} \to \RR \cup \{ +\infty \}$  
is (globally) convex in the ordinary sense, where
$\tilde{f}$ is defined as the collection of convex extensions of $f$ in each 
unit hypercube 
$\{ x =(x_{1}, \ldots,x_{n}) \in \RR\sp{n} 
\mid a_{i} \leq x_{i} \leq a_{i} + 1 \ (i=1,\ldots, n) \}$ 
with $a \in \ZZ^{n}$;
see Section~\ref{SCintcnvfn}.
A proximity theorem for integrally convex functions
has recently been established in \cite{MMTT16proxICissac,MMTT17proxIC},
together with a proximity-scaling algorithm for minimizing integrally convex functions.

A function
$f: \ZZ^{n} \to \RR \cup \{ +\infty \}$
in $x=(x_{1}, \ldots,x_{n}) \in \ZZ^{n}$
is called {\em separable convex} 
if it can be represented as
$f(x) = \varphi_{1}(x_{1}) + \cdots + \varphi_{n}(x_{n})$
with univariate discrete convex functions
$\varphi_{i}: \ZZ \to \RR \cup \{ +\infty \}$
satisfying 
$\varphi_{i}(t-1) + \varphi_{i}(t+1) \geq 2 \varphi_{i}(t)$
for all $t \in \ZZ$.
Separable convex functions are an obvious
 example of integrally convex functions.
Moreover, 
{\rm L}-convex,
${\rm L}^{\natural}$-convex,
{\rm M}-convex,  
${\rm M}^{\natural}$-convex,  
${\rm L}^{\natural}_{2}$-convex, and 
${\rm M}^{\natural}_{2}$-convex functions \cite{Mdcasiam}
and BS-convex and UJ-convex functions \cite{Fuj14bisubmdc}
are integrally convex functions.
An integrally convex function is L$^{\natural}$-convex 
if and only if it is submodular \cite{FM00}.

The concept of integral convexity found applications,
e.g., in economics and game theory.
It is used in formulating discrete fixed point theorems
\cite{Iim10,IMT05,Yan09fixpt}
and designing solution algorithms for discrete systems of nonlinear equations
\cite{LTY11nle,Yan08comp}.
In game theory  integral concavity of payoff functions 
guarantees the existence of a pure strategy equilibrium 
in finite symmetric games \cite{IW14}.

Various operations can be defined for discrete functions  
$f: \mathbb{Z}\sp{n} \to \mathbb{R} \cup \{ +\infty  \}$
through natural adaptations of 
standard operations in convex analysis, such as
\begin{itemize}
\item
origin shift  $f(x) \mapsto f(x+b)$ with an integer vector $b$,

\item
sign inversion of the variable $f(x) \mapsto f(-x)$, 

\item
nonnegative multiplication of function values
$f(x) \mapsto a f(x)$ with $a \geq 0$,

\item
subtraction of a linear function
$f \mapsto  f[-p]$,
where
$f[-p]$ denotes the function defined by
$f[-p](x) = f(x) - \sum_{i=1}\sp{n} p_{i} x_{i}$
for $p \in \RR\sp{n}$.
\end{itemize}
It is known \cite{Mdcasiam,MS01rel}
that these basic operations preserve
integral convexity
as well as
{\rm L}-, 
${\rm L}^{\natural}$-,
{\rm M}-,  
${\rm M}^{\natural}$-,  
${\rm L}^{\natural}_{2}$-, and 
${\rm M}^{\natural}_{2}$-convexity.
For a positive integer $\alpha$,
the $\alpha$-scaling of $f$ means the function  
$f^{\alpha}: \mathbb{Z}\sp{n} \to \mathbb{R} \cup \{ +\infty  \}$
defined by $f^{\alpha}(x) = f(\alpha x) $ for $x \in \mathbb{Z}^{n}$.
{\rm L}- and ${\rm L}^{\natural}$-convexity
are preserved under scaling,
whereas
{\rm M}- and ${\rm M}^{\natural}$-convexity are not 
stable under scaling \cite{Mdcasiam}.
The scaling operation for integrally convex functions is considered 
recently in \cite{MMTT16proxICissac,MMTT17proxIC,MMTT17midpt}.
Integral convexity admits the scaling operation only when $n \leq 2$;
when $n \geq 3$, the scaled function $f^{\alpha}$  is  not necessarily integrally convex.
Within subclasses of integral convex functions
with local or global discrete midpoint convexity,
the scaling operation can be defined for all $n$.

In this paper we are concerned with projection and convolution operations.
For a set $S \subseteq \mathbb{Z}\sp{n+m}$,
the {\em projection} of $S$ (to $\mathbb{Z}\sp{n}$)
is the set $T \subseteq  \mathbb{Z}\sp{n}$ defined by 
\begin{equation}  \label{projsetdef}
 T  = \{ x \in \mathbb{Z}\sp{n} \mid \exists y \in \mathbb{Z}\sp{m}: (x,y) \in S \}. 
\end{equation}
For a function
$f: \mathbb{Z}\sp{n+m} \to \mathbb{R} \cup \{ +\infty  \}$,
the  projection%
\footnote{
In (ordinary) convex analysis, the projection operation 
(\ref{projfndef}) for functions
is also referred to as ``partial minimization''
and the resulting function $g$ as 
``marginal function'' \cite[Def. 2.4.4]{HL01}.
Note that the epigraph of $g$ is the projection of the epigraph
of $f$ \cite[Fig.2.4.1]{HL01}.
} 
 of $f$ to $\mathbb{Z}\sp{n}$ is the function
$g: \mathbb{Z}\sp{n} \to \mathbb{R} \cup \{ +\infty  \}$
defined by 
\begin{equation} \label{projfndef}
 g(x) = \inf \{f(x,y)\mid y\in \mathbb{Z}\sp{m}\}  
\qquad (x \in \mathbb{Z}\sp{n}) , 
\end{equation}
where it is assumed that $g(x) > -\infty$ for all $x$.
For sets $S_{1}$, $S_{2} \subseteq \ZZ\sp{n}$, 
their {\em Minkowski sum} $S_{1}+S_{2}$ is defined by
\begin{equation} \label{minkowsumZdef}
S_{1}+S_{2} = \{ y + z \mid y \in S_{1}, z \in S_{2} \} .
\end{equation}
For functions
$f_{1}, f_{2}: \ZZ\sp{n} \to \RR \cup \{ +\infty \}$,
their (integer infimal) {\em convolution}
is the function
$f_{1} \convZ f_{2}: \ZZ\sp{n} \to \RR \cup \{ +\infty \}$ 
defined by
\begin{equation} \label{f1f2convdef}
(f_{1} \convZ f_{2})(x) =
 \inf\{ f_{1}(y) + f_{2}(z) \mid x= y + z, \,  y, z \in \ZZ\sp{n} \}
\quad (x \in \ZZ\sp{n}),
\end{equation}
where it is assumed that, for every $x \in \ZZ\sp{n}$,
 the infimum on the right-hand side is not equal to $-\infty$.

The following facts are known about projections. 

\begin{itemize}
\item
The projection of a separable convex function
is separable convex.

\item
The projection of an L$^{\natural}$-convex function
is L$^{\natural}$-convex
\cite[Theorem 7.11]{Mdcasiam}.
Similarly, the projection of an L-convex function is L-convex
\cite[Theorem 7.10]{Mdcasiam}.

\item
The projection of an M$^{\natural}$-convex function
is M$^{\natural}$-convex
\cite[Theorem 6.15]{Mdcasiam}.
However, the projection of an M-convex function is 
not necessarily M-convex%
\footnote{
The failure of M-convexity is ascribed to the obvious reason that 
the effective domain of the projected function $g$ does not lie 
on a hyperplane of a constant component sum.
}.
\end{itemize}
The following facts are found in this paper.

\begin{itemize}
\item
The projection of an integrally convex function
is integrally convex
(Theorem~\ref{THicvproj}). 

\item
The projection of an integrally convex function
with global discrete midpoint convexity
is an integrally convex function
with global discrete midpoint convexity
(Theorem~\ref{THsdicvproj}). 

\item
The projection of an integrally convex function
with local discrete midpoint convexity
is an integrally convex function
with local discrete midpoint convexity
(Theorem~\ref{THwdicvproj}). 
\end{itemize}

As for convolutions the following facts are known.

\begin{itemize}
\item
The convolution of separable convex functions is separable convex%
\footnote{
Proof:  If 
$f_{k}(x)  =  \sum_{i=1}\sp{n} \varphi_{ki}(x_{i})$ $(k=1,2)$,
then 
$(f_{1} \convZ f_{2})(x) 
=  \sum_{i=1}\sp{n} (\varphi_{1i} \convZ \varphi_{2i})(x_{i})$.
}.

\item
The convolution of M$^{\natural}$-convex functions 
is M$^{\natural}$-convex
\cite[Theorem 6.15]{Mdcasiam}.
Similarly, the convolution of M-convex functions is M-convex
\cite[Theorem 6.13]{Mdcasiam}.

\item
The convolution of L$^{\natural}$-convex functions 
is not necessarily L$^{\natural}$-convex, 
but is integrally convex \cite[Theorem 8.42]{Mdcasiam}.
Similarly for L-convex functions.

\item
The convolution of an L$^{\natural}$-convex function
and a separable convex function is L$^{\natural}$-convex
\cite[Theorem 7.11]{Mdcasiam}.
Similarly, the convolution of L-convex function
and a separable convex function is L-convex
\cite[Theorem 7.10]{Mdcasiam}.

\item
The convolution of integrally convex functions
is not necessarily integrally convex
\cite[Example 4.12]{MS01rel}, 
\cite[Example 3.15]{Mdcasiam}.
\end{itemize}
The following facts are found in this paper.

\begin{itemize}
\item
The convolution of an integrally convex function
and a separable convex function is integrally convex
(Theorem~\ref{THicsepconvol}).

\item
The convolution of an integrally convex function
with global discrete midpoint convexity
and a separable convex function is 
not necessarily 
an integrally convex function
with global discrete midpoint convexity
(Examples \ref{EXdicdim3set} and \ref{EXdicdim3fn}).

\item
The convolution of an integrally convex function
with local discrete midpoint convexity
and a separable convex function is 
not necessarily 
an integrally convex function
with local discrete midpoint convexity
(Examples \ref{EXdicdim3set} and \ref{EXdicdim3fn}).
\end{itemize}

This paper is organized as follows.
Section~\ref{SCprelim} is a review of relevant results on
integrally convex functions.
Section~\ref{SCproj} deals with projections and
Section~\ref{SCconvol} with convolutions.
Concluding remarks are given in Section~\ref{SCconclrem}.

\section{Preliminaries}
\label{SCprelim}

\subsection{Basic definition and notation}
\label{SCbasicdef}

For integer vectors 
$a \in (\ZZ \cup \{ -\infty \})\sp{n}$ and 
$b \in (\ZZ \cup \{ +\infty \})\sp{n}$ 
with $a \leq b$,
$[a,b]_{\ZZ}$ denotes the integer interval 
(box, discrete rectangle)
between $a$ and $b$,
i.e.,
$[a,b]_{\ZZ} = \{ x \in \ZZ\sp{n} \mid a \leq x \leq b \}$.
For a function $f: \ZZ^{n} \to \RR \cup \{ +\infty \}$,
the effective domain of $f$ means the set
$\{ x \in \ZZ^{n} \mid f(x) <  +\infty \}$ and is denoted by $\dom f$.
The indicator function of a set $S \subseteq \ZZ\sp{n}$
is a function 
$\delta_{S}: \ZZ\sp{n} \to \{ 0, +\infty \}$
defined by
$ \delta_{S}(x)  =
   \left\{  \begin{array}{ll}
    0            &   (x \in S) ,      \\
   + \infty      &   (x \not\in S) . \\
                      \end{array}  \right.$ 
For $x \in \RR^{n}$,
$\lceil x \rceil$ and $\lfloor x \rfloor$ denote
the integer vectors obtained by
componentwise rounding-up and rounding-down to the nearest integers,
respectively.

\subsection{Integrally convex functions}
\label{SCintcnvfn}

For $x \in \RR^{n}$ the integral neighborhood of $x$ is defined as 
\[
N(x) = \{ z \in \mathbb{Z}^{n} \mid | x_{i} - z_{i} | < 1 \ (i=1,\ldots,n)  \}.
\]
In other words, $N(x)= [ \lfloor x \rfloor , \lceil x \rceil ]_{\ZZ}$. 
For a function
$f: \mathbb{Z}^{n} \to \mathbb{R} \cup \{ +\infty  \}$
the local convex extension 
$\tilde{f}: \RR^{n} \to \RR \cup \{ +\infty \}$
of $f$ is defined 
as the union of all convex envelopes of $f$ on $N(x)$.  That is,
\begin{equation} \label{fnconvclosureloc2}
 \tilde f(x) = 
  \min\{ \sum_{y \in N(x)} \lambda_{y} f(y) \mid
      \sum_{y \in N(x)} \lambda_{y} y = x,  
  (\lambda_{y})  \in \Lambda(x) \}
\quad (x \in \RR^{n}) ,
\end{equation} 
where $\Lambda(x)$ denotes the set of coefficients for convex combinations indexed by $N(x)$:
\[ 
  \Lambda(x) = \{ (\lambda_{y} \mid y \in N(x) ) \mid 
      \sum_{y \in N(x)} \lambda_{y} = 1, 
      \lambda_{y} \geq 0 \ (\forall y \in N(x))  \} .
\] 
If $\tilde f$ is convex on $\RR^{n}$,
then $f$ is said to be {\em integrally convex} \cite{FT90,Mdcasiam}.

A set $S \subseteq \ZZ^{n}$ is said to be 
integrally convex if
the convex hull $\overline{S}$ of $S$ coincides with the union of the 
convex hulls of $S \cap N(x)$ over $x \in \RR^{n}$,
i.e., if, for any $x \in \RR^{n}$, 
$x \in \overline{S} $ implies $x \in  \overline{S \cap N(x)}$.
The effective domain of an integrally convex function 
is an integrally convex set.

Integral convexity can be characterized by a local condition
under the assumption that the effective domain is an integrally convex set.

\begin{theorem}[\cite{FT90,MMTT17proxIC}]
\label{THfavtarProp33}
Let $f: \mathbb{Z}^{n} \to \mathbb{R} \cup \{ +\infty  \}$
be a function with an integrally convex effective domain.
Then the following properties are equivalent:

{\rm (a)}
$f$ is integrally convex.

{\rm (b)}
For every $x, y \in \ZZ\sp{n}$ with $\| x - y \|_{\infty} =2$ 
we have \ 
\begin{equation}  \label{intcnvconddist2}
\tilde{f}\, \bigg(\frac{x + y}{2} \bigg) 
\leq \frac{1}{2} (f(x) + f(y)).
\end{equation}
\end{theorem}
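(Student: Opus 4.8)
The plan is to prove the two nontrivial implications: (a) $\Rightarrow$ (b) and (b) $\Rightarrow$ (a), the reverse directions being either trivial or known. The direction (a) $\Rightarrow$ (b) is immediate: if $f$ is integrally convex then $\tilde f$ is convex on $\RR^n$, so for any $x,y\in\ZZ^n$ we have $\tilde f((x+y)/2)\le\frac12(\tilde f(x)+\tilde f(y))=\frac12(f(x)+f(y))$, since $\tilde f$ agrees with $f$ on lattice points. This needs no hypothesis on $\|x-y\|_\infty$. So the whole content is in (b) $\Rightarrow$ (a).

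For (b) $\Rightarrow$ (a), the goal is to show $\tilde f$ is convex on $\RR^n$ given that $\dom f$ is integrally convex and the distance-$2$ midpoint inequality (\ref{intcnvconddist2}) holds. First I would recall the standard reduction: $\tilde f$ is convex on $\RR^n$ if and only if for every pair of points $p,q\in\RR^n$ one has $\tilde f((p+q)/2)\le\frac12(\tilde f(p)+\tilde f(q))$; and because $\tilde f$ is piecewise linear with breakpoints on the integer lattice, it suffices to verify convexity ``across'' each facet of the triangulation — equivalently, to check the midpoint inequality for $p,q$ lying in lattice cells, which by a further standard argument can be reduced to $p,q\in\ZZ^n$ (one perturbs $p,q$ to nearby lattice points, or invokes that $\tilde f$ restricted to each unit cube is the convex envelope of its vertices, so local convexity plus matching across shared faces gives global convexity). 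The crux is therefore: assuming (\ref{intcnvconddist2}) for all $\|x-y\|_\infty=2$, deduce $\tilde f((x+y)/2)\le\frac12(f(x)+f(y))$ for \emph{all} $x,y\in\ZZ^n$.

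This last step I would prove by induction on $\|x-y\|_\infty$ (and, as a secondary measure, on $\|x-y\|_1$). The base cases $\|x-y\|_\infty\in\{0,1\}$ are trivial since then $(x+y)/2\in N(x)=N(y)$ and $\tilde f$ is by definition the convex envelope on that neighborhood, giving the inequality directly; the case $\|x-y\|_\infty=2$ is the hypothesis. For $\|x-y\|_\infty\ge 3$, or $\|x-y\|_\infty=2$ with large $\|x-y\|_1$, I would pick a suitable lattice point $w$ ``between'' $x$ and $y$ — concretely a point of $N((x+y)/2)$ chosen so that the segments $[x,w]$ and $[w,y]$ are each shorter in the relevant measure — and combine the inductive inequalities for $(x,w)$ and $(w,y)$ with the known local convexity of $\tilde f$ near $(x+y)/2$ to interpolate. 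The delicate issue here, and what I expect to be the main obstacle, is the interaction with $\dom f$: the interpolating point $w$ and the auxiliary midpoints must lie in $\dom f$, and one must ensure the convex combination certifying $\tilde f((x+y)/2)$ uses only points of $N((x+y)/2)\cap\dom f$. This is exactly where integral convexity of $\dom f$ is used — it guarantees that $(x+y)/2\in\overline{\dom f}$ implies $(x+y)/2\in\overline{\dom f\cap N((x+y)/2)}$, so the required local convex representation exists. Handling the book-keeping of which neighbor points are finite-valued, and choosing $w$ so the induction strictly decreases, is the technical heart; everything else is the routine piecewise-linear convexity machinery. I would organize the argument so that this neighbor-selection lemma is isolated and proved first, then fed into the induction.
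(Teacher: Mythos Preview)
The paper does not actually prove Theorem~\ref{THfavtarProp33}; it is quoted in the preliminaries as a known result, with Remark~\ref{RMintcnvconcept} attributing the box-domain case to \cite[Proposition~3.3]{FT90} and the general case to \cite{MMTT17proxIC}. So there is no in-paper proof to compare your attempt against. That said, let me comment on the substance of your sketch, since (b) $\Rightarrow$ (a) is where the real work lies.

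Your Step~2 --- the reduction of convexity of $\tilde f$ to the midpoint inequality at \emph{lattice} pairs only --- is not the routine step you make it out to be, and the natural way to carry it out fails. Concretely: write $\tilde f(p)=\sum_i\lambda_i f(x^{(i)})$ with $x^{(i)}\in N(p)$ and $\tilde f(q)=\sum_j\mu_j f(y^{(j)})$ with $y^{(j)}\in N(q)$; then
\[
\tfrac12(\tilde f(p)+\tilde f(q))\ \ge\ \sum_{i,j}\lambda_i\mu_j\,\tilde f\bigl(\tfrac{x^{(i)}+y^{(j)}}{2}\bigr),
\qquad
\tfrac{p+q}{2}=\sum_{i,j}\lambda_i\mu_j\,\tfrac{x^{(i)}+y^{(j)}}{2},
\]
and one would like to conclude by local convexity of $\tilde f$ on $N((p+q)/2)$. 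But the midpoints $(x^{(i)}+y^{(j)})/2$ need \emph{not} lie in $N((p+q)/2)$: already in one dimension, $p=0.9$, $q=1.9$ gives $N((p+q)/2)=\{1,2\}$, yet $x^{(i)}=0$, $y^{(j)}=1$ occur with positive weight and their midpoint $0.5$ falls outside. So the ``standard argument'' you invoke does not exist in this form; the proof in \cite{MMTT17proxIC} works differently, arguing directly that $\tilde f$ is convex across every pair of neighbouring unit cubes (where all relevant lattice vertices are at $\ell_\infty$-distance at most $2$) and then invoking a local-to-global principle for piecewise-linear functions on the cubical complex, with integral convexity of $\dom f$ used to control the boundary.

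Your Step~3 has a related gap: you propose to combine the inductive inequalities for $(x,w)$ and $(w,y)$ ``with the known local convexity of $\tilde f$ near $(x+y)/2$'', but the midpoints $(x+w)/2$ and $(w+y)/2$ will generally not lie in $N((x+y)/2)$ once $\|x-y\|_\infty\ge 3$, so there is no local convex envelope available to stitch them together. Without a concrete mechanism for this step the induction does not close. If you want to pursue a lattice-pair induction, you would need a sharper tool --- for instance, a decomposition of $y-x$ into short steps that keeps all intermediate midpoints inside a common integral neighbourhood --- and that is essentially the content of the cited proof rather than a preliminary to it.
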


Integral convexity of a function
 can also be characterized by integral convexity of the minimizer sets.

\begin{theorem}[{\cite[Theorem 3.29]{Mdcasiam}}]
  \label{THintconvfnsetchar}
Suppose a function $f: \ZZ\sp{n} \to \RR \cup \{ +\infty \}$
has a nonempty bounded effective domain.  Then  $f$ is an integrally convex function 
if and only if 
$\argmin f[-p]$ is an integrally convex set for every $p \in \RR\sp{n}$.
\end{theorem}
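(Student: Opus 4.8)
The plan is to prove the two implications separately. Necessity (the ``only if'' part) is obtained directly from the definition of the local convex extension, while sufficiency (the ``if'' part) is proved by contraposition, the main tool being the convex envelope $\hat f$ of $f$ (the largest convex function on $\RR^{n}$ dominated by $f$ on $\ZZ^{n}$). Since $\dom f$ is finite, $\hat f$ is a polyhedral convex function with $\dom \hat f = \overline{\dom f}$, given at each $x$ by the infimum of $\sum_{y} \lambda_{y} f(y)$ over all ways to write $x = \sum_{y} \lambda_{y} y$ as a convex combination of points of $\dom f$. I will use three of its properties: $\hat f \le \tilde f$ pointwise (so that $f$ is integrally convex precisely when $\hat f = \tilde f$); $\hat f$, being polyhedral convex, is subdifferentiable at every point of $\overline{\dom f}$; and $\widehat{f[-p]} = \hat f[-p]$, $\widetilde{f[-p]} = \tilde f[-p]$, i.e.\ both envelopes commute with subtraction of a linear function.

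For necessity: since subtraction of a linear function preserves integral convexity, it is enough to show that $\argmin g$ is an integrally convex set whenever $g$ is integrally convex with nonempty bounded effective domain. Let $\mu = \min_{\ZZ^{n}} g$, which is finite and attained. Since each value $\tilde g(x)$ is a convex combination of values $g(y) \ge \mu$ with $y \in N(x)$, and $\tilde g(y) = g(y)$ for $y \in \ZZ^{n}$ (as $N(y) = \{y\}$), we get $\min_{\RR^{n}} \tilde g = \mu$; as $\tilde g$ is convex, $\argmin \tilde g$ is convex and contains $\argmin g$, so $\overline{\argmin g} \subseteq \argmin \tilde g$. Now take $x \in \overline{\argmin g}$, so that $\tilde g(x) = \mu$; by (\ref{fnconvclosureloc2}) there are $(\lambda_{y}) \in \Lambda(x)$ with $\sum_{y} \lambda_{y} y = x$ and $\sum_{y} \lambda_{y} g(y) = \mu$, and since $g(y) \ge \mu$ for all $y$, every $y$ with $\lambda_{y} > 0$ belongs to $\argmin g \cap N(x)$. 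Hence $x \in \overline{\argmin g \cap N(x)}$, proving integral convexity of $\argmin g$.

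For sufficiency, argue by contraposition: assume $f$ is not integrally convex, so $\hat f \ne \tilde f$ and there is $\bar x \in \overline{\dom f}$ with $\hat f(\bar x) < \tilde f(\bar x)$ (the right-hand side possibly $+\infty$). Choose a subgradient $p$ of $\hat f$ at $\bar x$ and set $g = f[-p]$, $\hat g = \hat f[-p]$, $\nu = \hat f(\bar x) - \langle p, \bar x\rangle$. Then $\hat g \ge \nu$ on $\RR^{n}$ and $F := \argmin \hat g$ is a polytope with $\bar x \in F$; moreover, since $\mathrm{epi}\,\hat g$ is the convex hull of the finite set $\{(w, g(w)) : w \in \dom f\}$ together with the upward ray, the vertices of $F$ are integer points of $\dom f$ at which $g$ takes the value $\nu$. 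For integer $z$ we have $g(z) = f(z) - \langle p, z\rangle \ge \hat g(z) \ge \nu$, so $\min_{\ZZ^{n}} g = \nu$ and the vertices of $F$ lie in $\argmin g$; as $F$ is the convex hull of its vertices, $\bar x \in \overline{\argmin g}$.

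To conclude, I check that $\bar x \notin \overline{\argmin g \cap N(\bar x)}$, so that $\argmin g$ is not integrally convex. Indeed, were $\bar x = \sum_{w} \gamma_{w} w$ a convex combination of points $w \in \argmin g \cap N(\bar x)$, each such $w$ would satisfy $g(w) = \nu$, i.e.\ $f(w) = \nu + \langle p, w\rangle$, and then (\ref{fnconvclosureloc2}) would yield $\tilde f(\bar x) \le \sum_{w} \gamma_{w} f(w) = \nu + \langle p, \bar x\rangle = \hat f(\bar x)$, contradicting $\hat f(\bar x) < \tilde f(\bar x)$. Hence $\argmin f[-p]$ is not integrally convex, which is the contrapositive of the desired implication. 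I expect the main obstacle to be the sufficiency part, and inside it the (routine but unavoidable) polyhedral geometry of $\hat f$: confirming it is polyhedral with effective domain $\overline{\dom f}$, subdifferentiable throughout that domain, compatible with linear shifts, and that the minimizer set of each $\hat f[-p]$ is a polytope whose vertices are integer points of $\dom f$. One should also keep in mind the harmless point that $\argmin g$ need not equal $F \cap \ZZ^{n}$ but only contains its vertices — which is all that the argument uses.
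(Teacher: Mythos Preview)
The paper does not supply its own proof of this theorem; it is quoted in the preliminaries section as a known result from \cite[Theorem~3.29]{Mdcasiam}. So there is no in-paper argument to compare against, and the relevant question is simply whether your proof is sound. It is.

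Your necessity argument is the natural one: pass to $g=f[-p]$, observe that $\tilde g$ attains the same minimum $\mu$ as $g$, and then read off from the defining formula (\ref{fnconvclosureloc2}) that any $x\in\overline{\argmin g}$ is a convex combination of points of $\argmin g\cap N(x)$. This is exactly how one would expect the direction to go.

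Your sufficiency argument is also correct, and the contrapositive via the polyhedral convex envelope $\hat f$ is the standard route. The key facts you invoke are all valid for a function with finite effective domain: $\mathrm{epi}\,\hat f$ is the Minkowski sum of the polytope $\mathrm{conv}\{(w,f(w)):w\in\dom f\}$ with the ray $\{0\}^{n}\times\RR_{\ge 0}$, hence $\hat f$ is polyhedral convex with $\dom\hat f=\overline{\dom f}$ and is subdifferentiable everywhere on that domain; subtraction of a linear function commutes with both $\hat{\,\cdot\,}$ and $\tilde{\,\cdot\,}$; and the face $F=\argmin\hat g$ is a bounded face of $\mathrm{epi}\,\hat f$ (under the affine isomorphism $(x,t)\mapsto x$), so its vertices lie among the extreme points of $\mathrm{epi}\,\hat f$, all of which have the form $(w,f(w))$ with $w\in\dom f$. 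Your concluding contradiction is clean: a representation of $\bar x$ as a convex combination of points of $\argmin g\cap N(\bar x)$ would force $\tilde f(\bar x)\le \hat f(\bar x)$ via (\ref{fnconvclosureloc2}).

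Your caveat that $\argmin g$ need not coincide with $F\cap\ZZ^{n}$ is well placed; in fact one always has $\argmin g\subseteq F\cap\ZZ^{n}$ (since $g(z)=\nu$ forces $\hat g(z)=\nu$), but the reverse inclusion can fail, and your argument only needs that the vertices of $F$ lie in $\argmin g$.
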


\begin{remark} \rm \label{RMintcnvconcept}
The concept of integrally convex functions
is introduced in \cite{FT90} for functions defined on 
integer intervals (discrete rectangles).
The extension to functions with general 
integrally convex effective domains
is straightforward, which is found in \cite{Mdcasiam}.
Theorem~\ref{THfavtarProp33} is proved in \cite[Proposition 3.3]{FT90} 
when the effective domain is an integer interval 
and in the general case in \cite{MMTT17proxIC}. 
\finbox
\end{remark}

\subsection{Discrete midpoint convexity}
\label{SCdiscmidptconv}

A function
$f: \ZZ^{n} \to \RR \cup \{ +\infty \}$
is called
{\em L$^{\natural}$-convex}
\cite{FM00,Mdcasiam}
if it satisfies discrete midpoint convexity
\begin{equation} \label{lnatfmidconv}
 f(x) + f(y) \geq
    f \left(\left\lceil \frac{x+y}{2} \right\rceil\right) 
  + f \left(\left\lfloor \frac{x+y}{2} \right\rfloor\right) 
\end{equation}
for all $x,y \in \ZZ^{n}$.
A function is ${\rm L}\sp{\natural}$-convex
if and only if it is
submodular and integrally convex.
L$^{\natural}$-convex functions
form a well-behaved subclass of integrally convex functions.

A set $S \subseteq  \ZZ\sp{n}$ is called 
L$^{\natural}$-convex if 
\begin{equation} \label{lnatcnvsetdef}
 x, y \in S
\ \Longrightarrow \ 
\left\lceil \frac{x+y}{2} \right\rceil , 
\left\lfloor \frac{x+y}{2} \right\rfloor  \in S. 
\end{equation}
A set $S$ is L$^{\natural}$-convex if and only if
if its indicator function $\delta_{S}$ is 
an L$^{\natural}$-convex function.
The effective domain of an L$^{\natural}$-convex function 
is an L$^{\natural}$-convex set.

A set $S \subseteq  \ZZ\sp{n}$ is called
{\em discrete midpoint convex}
\cite{MMTT17midpt} if 
\begin{equation} \label{dirintcnvsetdef}
 x, y \in S, \ \| x - y \|_{\infty} \geq 2
\ \Longrightarrow \ 
\left\lceil \frac{x+y}{2} \right\rceil , 
\left\lfloor \frac{x+y}{2} \right\rfloor  \in S. 
\end{equation}
Obviously, an ${\rm L}\sp{\natural}$-convex set  
is discrete midpoint convex.
It is also known that a discrete midpoint convex set is integrally convex.

A function $f: \ZZ\sp{n} \to \RR \cup \{ +\infty \}$ 
is called 
{\em  globally discrete midpoint convex}
if the discrete midpoint convexity (\ref{lnatfmidconv})
is satisfied by every pair $(x, y) \in \ZZ\sp{n} \times \ZZ\sp{n}$
with $\| x - y \|_{\infty} \geq 2$.
A function $f: \ZZ\sp{n} \to \RR \cup \{ +\infty \}$ 
is called 
{\em  locally discrete midpoint convex}
if $\dom f$ is a discrete midpoint convex set
and the discrete midpoint convexity (\ref{lnatfmidconv})
is satisfied by every pair $(x, y) \in \ZZ\sp{n} \times \ZZ\sp{n}$
with $\| x - y \|_{\infty} =2$ (exactly equal to two)%
\footnote{
Local discrete midpoint convex functions
are called ``directed integrally convex functions''
in \cite{MMTT16proxICissac}.
}.  
The effective domain 
of a (locally or globally) discrete midpoint convex function
is a discrete midpoint convex set.
A set $S$ is discrete midpoint convex if and only if
if its indicator function $\delta_{S}$ is 
a discrete midpoint convex function.

The inclusion relations 
among the function classes are summarized as follows:
\begin{align}
& \{  \mbox{separable convex functions} \} 
\notag \\
& \subsetneqq \ 
\{ \mbox{${\rm L}\sp{\natural}$-convex functions} \} 
= \{ \mbox{submodular integrally convex functions} \} 
\notag \\
& \subsetneqq \ 
\{ \mbox{globally discrete midpoint convex functions} \} 
\notag \\
& \subsetneqq \ 
\{ \mbox{locally discrete midpoint convex functions} \} 
\notag \\
&  
\subsetneqq \ 
\{ \mbox{integrally convex functions} \}.
  \label{fnclasses3}
\end{align}
All the inclusions above are proper; see \cite{MMTT17midpt}.

An inequality, called
``parallelogram inequality,''
is known for discrete midpoint convex functions.
For any pair of distinct vectors $x, y \in \ZZ\sp{n}$,
we can decompose $y-x$ into $\{ -1,0,+1 \}$-vectors as
\begin{equation} \label{DICdecAkBksum}
y - x = \sum_{k=1}^{m} (\bm{1}_{A_{k}} - \bm{1}_{B_{k}}),
\end{equation}
where $m = \| y - x \|_{\infty}$,
\begin{equation} \label{DICdecAkBkdef}
 A_{k} = \{ i \mid y_{i}- x_{i} \geq m+1-k \},
\quad
 B_{k} = \{ i \mid y_{i}- x_{i} \leq -k \} 
\qquad (k=1,\ldots,m) .
\end{equation}
Note that 
$A_{1} \subseteq A_{2}  \subseteq \cdots \subseteq A_{m}$, \
$B_{1} \supseteq B_{2} \supseteq \cdots \supseteq B_{m}$, \
$A_{m} \cap B_{1} = \emptyset$, and
$A_{1} \cup B_{m} \not= \emptyset$.
The following theorem is a reformulation of
the parallelogram inequality given in \cite{MMTT16proxICissac,MMTT17midpt}.

\begin{theorem}[\cite{MMTT16proxICissac,MMTT17midpt}]  \label{THparallelineqgen-var}
Let $f : \ZZ\sp{n} \to \RR \cup \{+\infty\}$ be 
a (globally or locally) discrete midpoint convex function, 
and $x, y \in \dom f$.
Let
$\displaystyle 
d = \sum_{k \in J} (\bm{1}_{A_{k}} - \bm{1}_{B_{k}})$
for any $J \subseteq \{ 1,2,\ldots, m \}$
in the decomposition {\rm (\ref{DICdecAkBksum})}.
Then
\begin{align}
 f(x) + f(y)  \geq  f(x + d) +  f(y - d) .
\label{paraineqd1d2gen}
\end{align}
\end{theorem}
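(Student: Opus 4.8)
Since the asserted inequality is, as noted above, a reformulation of the parallelogram inequality of \cite{MMTT16proxICissac,MMTT17midpt}, the most direct route is to recall that result in its original form and spell out the correspondence: the index set $J$ selects which of the $\{-1,0,1\}$-layers $\bm{1}_{A_{k}}-\bm{1}_{B_{k}}$ of the decomposition (\ref{DICdecAkBksum}) are used, $d$ and $y-x-d=\sum_{k\notin J}(\bm{1}_{A_{k}}-\bm{1}_{B_{k}})$ are the two complementary partial moves along those layers, and $x+d$, $y-d$ are the two lattice points thereby produced inside the integer box spanned by $x$ and $y$; under this dictionary the statement coincides with the one proved there. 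If a self-contained argument is wanted instead, I would argue by induction on $m=\|x-y\|_{\infty}$.

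The base cases are easy. If $m\le1$ then $J\subseteq\{1\}$, so $d\in\{\veczero,\,y-x\}$ and $\{x+d,\,y-d\}=\{x,y\}$; the inequality holds with equality. If $m=2$, a short coordinatewise check from (\ref{DICdecAkBkdef}) gives $x+(\bm{1}_{A_{1}}-\bm{1}_{B_{1}})=\lfloor(x+y)/2\rfloor$ and $x+(\bm{1}_{A_{2}}-\bm{1}_{B_{2}})=\lceil(x+y)/2\rceil$, so for every $J\subseteq\{1,2\}$ the pair $\{x+d,\,y-d\}$ equals either $\{x,y\}$ (when $J=\emptyset$ or $J=\{1,2\}$) or $\{\lceil(x+y)/2\rceil,\,\lfloor(x+y)/2\rfloor\}$ (when $J=\{1\}$ or $J=\{2\}$); in the latter case the assertion is precisely the discrete midpoint convexity inequality (\ref{lnatfmidconv}) for $x,y$, which are at $\ell_{\infty}$-distance two, so it is available both in the local and in the global setting (and the two midpoints lie in $\dom f$ because $\dom f$ is a discrete midpoint convex set).

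For the inductive step $m\ge3$, let $h=\bm{1}_{A_{1}}-\bm{1}_{B_{1}}$ be the outermost layer (acting on the maximal-displacement coordinates $A_{1}$ and on all coordinates $B_{1}$ of opposite sign). The plan is to peel off $h$: replace $(x,y)$ by a pair of $\ell_{\infty}$-diameter at most $m-1$ --- say $(x+h,\,y-h)$, after establishing $f(x)+f(y)\ge f(x+h)+f(y-h)$ --- locate $x+d$ and $y-d$ relative to the $\{-1,0,1\}$-decomposition of this reduced pair, and invoke the inductive hypothesis. I expect this step to be the main obstacle, for two reasons. First, the layers of the reduced pair are no longer literally $\bm{1}_{A_{2}}-\bm{1}_{B_{2}},\dots,\bm{1}_{A_{m}}-\bm{1}_{B_{m}}$, so matching the two target points against the reduced decomposition, and proving the peeling inequality itself (which is the case $J=\{1\}$ of the theorem), require care. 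Second, and more seriously, in the locally discrete midpoint convex case only the distance-two instances of (\ref{lnatfmidconv}) are available, so the reduction must be threaded through a chain of distance-two moves, at each of which one has to certify --- using that $\dom f$ is a discrete midpoint convex set --- that the new intermediate point still lies in $\dom f$; in the globally discrete midpoint convex case this is easier, since (\ref{lnatfmidconv}) may be applied at all $\ell_{\infty}$-distances $\ge2$. Carrying out this induction in full is precisely the technical content of \cite{MMTT16proxICissac,MMTT17midpt}, which I would cite rather than reproduce.
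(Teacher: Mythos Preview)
The paper does not give its own proof of this theorem: it is stated as a reformulation of results in \cite{MMTT16proxICissac,MMTT17midpt} and simply cited. Your proposal correctly identifies this and opts to cite those references, which is exactly what the paper does; the additional inductive sketch you supply (with correct base cases $m\le 2$) goes beyond what the paper provides, but is consistent with the argument in the cited works.
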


\section{Projection Operation}
\label{SCproj}

Recall that the projection $T$ of a set 
$S \subseteq \mathbb{Z}\sp{n+m}$
is defined by
\begin{align} 
 T  &= \{ x \in \mathbb{Z}\sp{n} \mid \exists y \in \mathbb{Z}\sp{m}: (x,y) \in S \}, 
 \label{projsetdef-again}
\end{align}
and the projection $g$ of a function 
$f: \mathbb{Z}\sp{n+m} \to \mathbb{R} \cup \{ +\infty  \}$
is defined by
\begin{align} 
 g(x) &= \inf \{f(x,y)\mid y\in \mathbb{Z}\sp{m}\}  
\qquad (x \in \mathbb{Z}\sp{n}) , 
\label{projfndef-again}
\end{align}
where it is assumed that $g(x) > -\infty$ for all $x$.

\subsection{Projection of integrally convex functions}
\label{SCprojIC}

\begin{theorem} \label{THicvsetproj}
The projection of an integrally convex set
is an integrally convex set.
\end{theorem}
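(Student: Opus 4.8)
The plan is to work directly from the definition of integral convexity for sets: a set is integrally convex iff for every $x\in\RR\sp{n}$, $x\in\overline{T}$ implies $x\in\overline{T\cap N(x)}$. So I would fix $x\in\RR\sp{n}$ with $x\in\overline{T}$ and aim to show $x\in\overline{T\cap N(x)}$. Since $T$ is the projection of the integrally convex set $S\subseteq\ZZ\sp{n+m}$, its convex hull $\overline{T}$ is precisely the projection of $\overline{S}$ (projections of a set and of its convex hull commute). Hence there is a point $(x,\eta)\in\overline{S}$ for some $\eta\in\RR\sp{m}$; we may moreover take $\eta$ so that $(x,\eta)$ has a ``small'' fractional part, e.g. $\lfloor x\rfloor\le$ (first $n$ coords) $\le\lceil x\rceil$, and then by applying integral convexity of $S$ at the point $(x,\eta)$ we get $(x,\eta)\in\overline{S\cap N(x,\eta)}$, i.e. $(x,\eta)$ is a convex combination of integer points $(y\sp{(j)},z\sp{(j)})\in S$ all lying in the unit box $N(x,\eta)$.

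The key step is then to project this convex combination back down. Writing $(x,\eta)=\sum_j\lambda_j(y\sp{(j)},z\sp{(j)})$ with $(y\sp{(j)},z\sp{(j)})\in S\cap N(x,\eta)$, we get $x=\sum_j\lambda_j y\sp{(j)}$ with each $y\sp{(j)}\in T$. The point I must verify is that each $y\sp{(j)}$ lies in $N(x)$: this holds because $y\sp{(j)}$ is the first-$n$-coordinate part of a point in $N(x,\eta)$, and the first $n$ coordinates of $N(x,\eta)$ are exactly $N(x)$ (here is where choosing $\eta$ with $\lfloor x_i\rfloor\le$ nothing-extra is not even needed — $N$ of a tuple factors coordinatewise, $N(x,\eta)=N(x)\times N(\eta)$). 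Therefore $x\in\overline{T\cap N(x)}$, as required.

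The main obstacle is the bookkeeping around $\eta$: one must be sure that a witness $\eta\in\RR\sp{m}$ exists with $(x,\eta)\in\overline{S}$ and that applying the set-version of integral convexity of $S$ at $(x,\eta)$ is legitimate (it is, by definition, for any real point, with no restriction). A secondary subtlety is the claim that $\overline{T}$ equals the projection of $\overline{S}$; the inclusion $\overline{T}\supseteq\pi(\overline{S})$-direction and $\subseteq$-direction are both elementary (projection is linear, hence maps convex hulls onto convex hulls), but it should be stated explicitly. Once these two points are in place, the argument is just the coordinate-splitting of a convex combination and the observation $N(x,\eta)=N(x)\times N(\eta)$, which requires no computation.

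Finally, I would note that the boundedness or unboundedness of $\dom$ plays no role here — the statement is purely about sets — and that the same scheme (fix a real point, lift it to a witness in the larger space, apply integral convexity upstairs, project the resulting convex combination) is exactly the template that will be reused for the function version in Theorem~\ref{THicvproj} and for the midpoint-convex refinements.
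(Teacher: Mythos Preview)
Your proposal is correct and follows essentially the same route as the paper: lift $x\in\overline{T}$ to a witness $(x,\eta)\in\overline{S}$ via the commutation of projection with convex hull (the paper isolates this as Lemma~\ref{LMprojconvhull}), apply integral convexity of $S$ at $(x,\eta)$, and project the resulting convex combination using $N(x,\eta)=N(x)\times N(\eta)$. Your aside about choosing $\eta$ with small fractional part is, as you note yourself, unnecessary and can simply be dropped.
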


\begin{proof}
Let $T \subseteq  \mathbb{Z}\sp{n}$
be the projection of an integrally convex set
$S \subseteq  \mathbb{Z}\sp{n+m}$.
We will show that
$x \in \overline{T}$ implies $x \in  \overline{T \cap N(x)}$.
Let $x \in \overline{T}$.
There exists $y \in \mathbb{R}\sp{m}$ 
such that $(x,y) \in \overline{S}$
(see Lemma~\ref{LMprojconvhull} below).
Then, by integral convexity of $S$,  we have
$(x,y) \in \overline{S \cap N((x,y))}$, i.e., 
there exist 
$(u\sp{(k)},v\sp{(k)}) \in S \cap N((x,y))$ 
and $\lambda_{k}$ $(k=1,2,\ldots,l)$
such that 
\begin{equation*} 
 (x,y) = \sum_{k=1}\sp{l} \lambda_{k} (u\sp{(k)},v\sp{(k)}),
\qquad
 \sum_{k=1}\sp{l} \lambda_{k}  = 1,  \quad   \lambda_{k}  \geq 0.
\end{equation*}
We have
$x = \sum_{k=1}\sp{l} \lambda_{k} u\sp{(k)}$
from the first equation,
and 
$u\sp{(k)} \in T \cap N(x)$
from $(u\sp{(k)},v\sp{(k)}) \in S \cap N((x,y))$.
Hence $x \in  \overline{T \cap N(x)}$. 
\end{proof}

The following fact used in the above proof
is stated and proved for completeness,
though it is just a basic fact about convexity.

\begin{lemma} \label{LMprojconvhull}
$\overline{T} = \{ x \in \mathbb{R}\sp{n} 
  \mid \exists y \in \mathbb{R}\sp{m}: (x,y) \in \overline{S} \}$.
That is, the convex hull of the projection of $S$ 
coincides with the projection of the convex hull of $S$.
\end{lemma}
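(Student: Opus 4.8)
The plan is to verify the two set inclusions separately. Both are instances of the elementary fact that the coordinate projection $\pi : \mathbb{R}\sp{n+m} \to \mathbb{R}\sp{n}$, $(x,y) \mapsto x$, is linear, and that a linear map commutes with the operation of forming convex hulls (that is, $\pi(\overline{A}) = \overline{\pi(A)}$ for every $A$). Since $S \subseteq \mathbb{Z}\sp{n+m}$ and $\pi$ carries integer points to integer points, the set $T$ in (\ref{projsetdef-again}) is exactly $\pi(S)$, and the right-hand side of the claimed identity is precisely $\pi(\overline{S})$; so the lemma is the statement $\overline{\pi(S)} = \pi(\overline{S})$.

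For the inclusion ``$\overline{T} \subseteq \{ x \mid \exists\, y \colon (x,y) \in \overline{S} \}$'' --- the direction actually invoked in the proof of Theorem~\ref{THicvsetproj} --- I would take $x \in \overline{T}$ and write $x = \sum_{k=1}\sp{l} \lambda_{k} x\sp{(k)}$ with $x\sp{(k)} \in T$, $\lambda_{k} \geq 0$, $\sum_{k} \lambda_{k} = 1$. By the definition of $T$, for each $k$ there is some $y\sp{(k)} \in \mathbb{Z}\sp{m}$ with $(x\sp{(k)}, y\sp{(k)}) \in S$. Then $y := \sum_{k=1}\sp{l} \lambda_{k} y\sp{(k)}$ satisfies $(x,y) = \sum_{k=1}\sp{l} \lambda_{k} (x\sp{(k)}, y\sp{(k)}) \in \overline{S}$, which places $x$ in the right-hand side.

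For the reverse inclusion I would start from $x$ such that $(x,y) \in \overline{S}$ for some $y \in \mathbb{R}\sp{m}$, express $(x,y) = \sum_{k=1}\sp{l} \lambda_{k} (u\sp{(k)}, v\sp{(k)})$ as a convex combination of points $(u\sp{(k)}, v\sp{(k)}) \in S$, observe that each $u\sp{(k)} \in T$, and read off $x = \sum_{k=1}\sp{l} \lambda_{k} u\sp{(k)} \in \overline{T}$. I expect no real obstacle in this argument; the only point worth a moment's thought is that $\overline{\,\cdot\,}$ here is the convex hull in the sense of all finite convex combinations, for which both computations above are literally term-by-term applications of linearity of $\pi$, so that no compactness or closure issues arise.
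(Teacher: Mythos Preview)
Your proposal is correct and follows essentially the same approach as the paper: both prove the two inclusions by writing out explicit convex combinations, lifting representatives from $T$ to $S$ for one direction and projecting from $S$ to $T$ for the other. Your added conceptual framing via the linearity of $\pi$ is a nice gloss, but the concrete argument you then give matches the paper's proof step for step (modulo notation).
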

\begin{proof}
We denote
$ \{ x \in \mathbb{R}\sp{n} 
  \mid \exists y \in \mathbb{R}\sp{m}: (x,y) \in \overline{S} \}$
by ${\rm proj}(\overline{S})$.
To show $\overline{T} \subseteq {\rm proj}(\overline{S})$,
assume $x \in \overline{T}$.
Then there exist
$u\sp{(k)} \in T$ $(k=1,2,\ldots,l)$
such that
\begin{align*} 
 x &= \sum_{k=1}\sp{l} \lambda_{k} u\sp{(k)},
\qquad
 \sum_{k=1}\sp{l} \lambda_{k}  = 1,  \quad   \lambda_{k}  \geq 0 .
\end{align*}
Since $T$ is the projection of $S$
and $u\sp{(k)} \in T$,
there exist $v\sp{(k)}$ such that
$(u\sp{(k)},v\sp{(k)}) \in S$. 
Defining $y = \sum_{k=1}\sp{l} \lambda_{k} v\sp{(k)}$
with the coefficients $\lambda_{1}, \lambda_{2}, \ldots, \lambda_{l}$ above,
we have
$(x,y) = \sum_{k=1}\sp{l} \lambda_{k} (u\sp{(k)},v\sp{(k)})
\in \overline{S}$.
This shows that $x \in {\rm proj}(\overline{S})$.
Hence $\overline{T} \subseteq {\rm proj}(\overline{S})$.

To show the converse
$\overline{T} \supseteq {\rm proj}(\overline{S})$,
assume $x \in {\rm proj}(\overline{S})$.
Then there exists
$y$ such that $(x,y) \in \overline{S}$,
which in turn implies
$(x,y) = \sum_{k=1}\sp{l} \lambda_{k} (u\sp{(k)},v\sp{(k)})$
for some
$(u\sp{(k)},v\sp{(k)}) \in S$ and $\lambda_{k}$
$(k=1,2,\ldots,l)$.
Therefore,
$x = \sum_{k=1}\sp{l} \lambda_{k} u\sp{(k)}$
and $u\sp{(k)} \in T$
$(k=1,2,\ldots,l)$,
which shows $x \in \overline{T}$.
\end{proof}

\begin{theorem} \label{THicvproj}
The projection of an integrally convex function
is an integrally convex function.
\end{theorem}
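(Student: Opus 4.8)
The plan is to reduce integral convexity of the projection $g$ to the already-proved Theorem~\ref{THicvsetproj} on projections of integrally convex sets, together with Theorem~\ref{THfavtarProp33}, the local characterization of integral convexity via points at $\ell_\infty$-distance $2$. First I would dispose of the effective domain: since $\dom g$ is exactly the projection of $\dom f$ (as $g(x) < +\infty$ iff there is some $y$ with $f(x,y) < +\infty$, given that the infimum is never $-\infty$), Theorem~\ref{THicvsetproj} shows $\dom g$ is an integrally convex set. Thus it suffices to verify condition (b) of Theorem~\ref{THfavtarProp33}: for any $x, x' \in \ZZ^n$ with $\|x - x'\|_\infty = 2$, the inequality $\tilde{g}((x+x')/2) \le \tfrac12(g(x) + g(x'))$.

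Next I would handle one technical nuisance: the infimum defining $g$ need not be attained. The clean route is to first prove the theorem under the assumption that the infima are attained (so one can pick $y, y' \in \ZZ^m$ with $f(x,y) = g(x)$, $f(x',y') = g(x')$), and then remove the assumption by an approximation/limiting argument — e.g.\ pick $y, y'$ with $f(x,y) \le g(x) + \varepsilon$ and $f(x',y') \le g(x') + \varepsilon$ and let $\varepsilon \to 0$, using continuity of the local convex extension on its effective domain. So assume attainment. The key idea is then to express $\tilde g$ at the midpoint in terms of $\tilde f$. I would argue that $\tilde{g}(w) = \inf\{ \tilde{f}(w, z) \mid z \in \RR^m \}$ for every $w \in \RR^n$; this is the continuous analogue of Lemma~\ref{LMprojconvhull} applied to the epigraph (partial minimization of a convex function is convex, and the local convex extension of $g$ agrees with the partial minimization of the local convex extension of $f$ — this needs a short argument that the neighborhood structure is compatible, i.e.\ $N((x,y))$ projects onto $N(x)$). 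Granting this identity,
\begin{equation*}
\tilde{g}\Bigl(\frac{x+x'}{2}\Bigr) \le \tilde{f}\Bigl(\frac{x+x'}{2}, \frac{y+y'}{2}\Bigr) = \tilde{f}\Bigl(\frac{(x,y)+(x',y')}{2}\Bigr).
\end{equation*}

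Now I would apply the hypothesis that $f$ is integrally convex. The pair $(x,y), (x',y') \in \ZZ^{n+m}$ satisfies $\|(x,y) - (x',y')\|_\infty \ge \|x - x'\|_\infty = 2$. If the distance is exactly $2$, Theorem~\ref{THfavtarProp33}(b) for $f$ gives directly $\tilde{f}((x,y)/2 + (x',y')/2) \le \tfrac12(f(x,y) + f(x',y')) = \tfrac12(g(x) + g(x'))$, and combining with the display above finishes the proof. The genuine obstacle is the case $\|(x,y)-(x',y')\|_\infty > 2$, i.e.\ when the $y$-coordinates spread things further apart than the prescribed distance-$2$ in the $x$-part; then Theorem~\ref{THfavtarProp33}(b) does not apply to this pair as stated. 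The remedy I expect to need is to not take the unconstrained minimizers $y, y'$, but to choose them more carefully — replacing $y'$ by something closer to $y$ so that the $\ell_\infty$-distance in $\ZZ^{n+m}$ stays equal to $2$, while not increasing $f$ too much — invoking integral convexity of $f$ (or convexity of $\tilde f$ along the segment) to control the value. Concretely, one can move from $(x', y')$ toward $(x', y)$ in unit steps staying in $\dom f$, using convexity of $\tilde f$ to bound $f$ at the intermediate integer point, until the sup-distance to $(x,y)$ drops to $2$; this is where the real work lies, and I would structure it as a separate lemma. An alternative, possibly cleaner, packaging is to use Theorem~\ref{THintconvfnsetchar} instead: show that $\argmin g[-p]$ is the projection of $\argmin f[-(p,\veczero)]$ (a minimizer-set computation) and then invoke Theorem~\ref{THicvsetproj}; the cost is that one must assume a bounded effective domain, which again can be arranged by restriction and then removed by a limiting argument.
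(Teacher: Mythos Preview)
Your setup coincides with the paper's: reduce to Theorem~\ref{THfavtarProp33}(b) for $g$ after handling $\dom g$ via Theorem~\ref{THicvsetproj}, pick near-minimizers $y,y'$ up to $\varepsilon$, and bound $\tilde g$ at the midpoint through $\tilde f$. The inequality $\tilde g(w)\le \tilde f(w,z)$ that you need (one direction of your claimed identity) is indeed immediate from the definitions: write $\tilde f(w,z)=\sum_k\lambda_k f(u^{(k)},v^{(k)})$ with $(u^{(k)},v^{(k)})\in N((w,z))$, use $f(u^{(k)},v^{(k)})\ge g(u^{(k)})$, and note that $u^{(k)}\in N(w)$ with $\sum_k\lambda_k u^{(k)}=w$. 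This is exactly how the paper argues (its inequalities (\ref{eq2z-v9}) and (\ref{eq7-v2})).

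Where you diverge is in the next step, and the ``genuine obstacle'' you identify is a phantom. You want
\[
\tilde f\Bigl(\tfrac{(x,y)+(x',y')}{2}\Bigr)\ \le\ \tfrac12\bigl(f(x,y)+f(x',y')\bigr),
\]
and you reach for Theorem~\ref{THfavtarProp33}(b) applied to $f$, which forces $\|(x,y)-(x',y')\|_\infty=2$ and creates your difficulty. But for $f$ you are using the hypothesis, not the conclusion: $f$ is integrally convex \emph{by assumption}, which by definition means $\tilde f$ is a convex function on $\RR^{n+m}$. Ordinary midpoint convexity of $\tilde f$ then gives the displayed inequality regardless of $\|(x,y)-(x',y')\|_\infty$. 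Theorem~\ref{THfavtarProp33} is only needed in the direction (b)$\Rightarrow$(a) for $g$; for $f$ you already have (a). With this one observation your argument is complete and identical to the paper's; the proposed remedies (sliding $y'$ toward $y$, or the detour through Theorem~\ref{THintconvfnsetchar} with a boundedness/limiting argument) are unnecessary.
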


\begin{proof}
Let $g$ be the projection of an integrally convex function $f$.
The effective domain $\dom g$ of $g$ coincides with the projection of $\dom f$,
whereas $\dom f$ is an integrally convex set
by the integral convexity of $f$.
By Theorem~\ref{THicvsetproj}, $\dom g$ is an integrally convex set.
Then by Theorem~\ref{THfavtarProp33}
it suffices to show
\begin{equation} \label{eq1-v2}
  \frac{1}{2} \ [g(x\sp{(1)}) + g(x\sp{(2)}) ] 
\geq   \tilde g \left( \frac{x\sp{(1)}+x\sp{(2)}}{2} \right) 
\end{equation}
 for any 
$x\sp{(1)}, x\sp{(2)} \in \dom g$
with
$\| x\sp{(1)} - x\sp{(2)} \|_{\infty} = 2$,
where $\tilde g$ is the local convex extension of $g$.
Take any  $\varepsilon > 0$.
By the definition (\ref{projfndef-again}) of projection,
there exist
$y\sp{(1)}, y\sp{(2)} \in \ZZ\sp{m}$
such that
$g(x\sp{(i)}) \geq  f(x\sp{(i)}, y\sp{(i)}) - \varepsilon$  for $i=1,2$,
which implies
\begin{equation}  \label{icprf8} 
\frac{1}{2} \ [ g(x\sp{(1)}) + g(x\sp{(2)}) ] \geq  
 \frac{1}{2} \ [f(x\sp{(1)}, y\sp{(1)}) + f(x\sp{(2)}, y\sp{(2)})] -  \varepsilon. 
\end{equation}
Consider the local convex extension $\tilde f(z)$
at $z = [(x\sp{(1)},y\sp{(1)})+(x\sp{(2)},y\sp{(2)})]/2 \in \mathbb{R}\sp{n+m}$.
There exist
$(u\sp{(k)},v\sp{(k)}) \in N(z)$ 
and $\lambda_{k}$ $(k=1,2,\ldots,l)$
such that 
\begin{align} 
& z = \sum_{k=1}\sp{l} \lambda_{k} (u\sp{(k)},v\sp{(k)}),
\quad
 \tilde f(z) = \sum_{k=1}\sp{l} \lambda_{k} f(u\sp{(k)},v\sp{(k)}),
\quad
 \sum_{k=1}\sp{l} \lambda_{k}  = 1,  \quad   \lambda_{k}  \geq 0 .
\label{eq2point-v2}
\end{align}
By Theorem~\ref{THfavtarProp33} for $f$ and the definition of projection $g$
we have
\begin{align}
& \frac{1}{2} \ [f(x\sp{(1)}, y\sp{(1)}) + f(x\sp{(2)}, y\sp{(2)})]
\geq \tilde f(z) = \sum_{k=1}\sp{l} \lambda_{k} f(u\sp{(k)},v\sp{(k)})
\geq 
 \sum_{k=1}\sp{l} \lambda_{k} g(u\sp{(k)}).
 \label{eq2z-v9}
\end{align}
Furthermore, we have
\begin{align} 
 \sum_{k=1}\sp{l} \lambda_{k} g(u\sp{(k)}) 
 \geq \tilde g\left( \frac{x\sp{(1)}+x\sp{(2)}}{2} \right),
\label{eq7-v2}
\end{align}
since
$(x\sp{(1)}+x\sp{(2)})/2 = \sum_{k=1}\sp{l} \lambda_{k} u\sp{(k)}$
by (\ref{eq2point-v2}) and
$u\sp{(k)} \in N( (x\sp{(1)}+x\sp{(2)})/2 )$.
It follows from (\ref{icprf8}), (\ref{eq2z-v9}),  and (\ref{eq7-v2}) that
\begin{equation*}  
\frac{1}{2} \ [ g(x\sp{(1)}) + g(x\sp{(2)}) ] 
 \geq \tilde g\left( \frac{x\sp{(1)}+x\sp{(2)}}{2} \right)
 -  \varepsilon.
\end{equation*}
This implies (\ref{eq1-v2}), since $\varepsilon > 0$ is arbitrary.
\end{proof}

\begin{remark} \rm 
In (ordinary) convex analysis,
convexity of functions is characterized by
convexity of epigraphs.
This characterization makes it possible to reduce 
the proof of convexity for projected functions (marginal functions)
to that for projected sets.
In discrete convex analysis, however,
convexity concepts for functions
such as integral convexity,
${\rm L}^{\natural}$-convexity, and
${\rm M}^{\natural}$-convexity,
do not admit simple characterizations
in terms of the corresponding discrete convexity of epigraphs.
Thus we need separate proofs for sets and functions.
\finbox
\end{remark}

\begin{remark} \rm \label{RMuseproj}
Suppose that $f(x,y)$ is integrally convex in $(x,y)$ and
${\rm L}^{\natural}$- or ${\rm M}^{\natural}$-convex in $y$,
and that $\dom f$ is bounded.
We can minimize such a function efficiently 
on the basis of Theorem \ref{THicvproj} 
if the dimension of $x$ is small%
\footnote{
This fact is pointed out by Fabio Tardella.
}.  
We denote by $n_{x}$ and $n_{y}$ the dimensions of $x$ and $y$, respectively,
and by $K_{x}$ and $K_{y}$ the $\ell_{\infty}$-sizes of $\dom f$ 
projected on the spaces of $x$ and $y$, respectively.
The minimization of $f$ can be formulated 
as the minimization of the projected function $g(x)$ 
defined by (\ref{projfndef}).
Since $g$ is integrally convex by Theorem \ref{THicvproj},
the algorithm of \cite{MMTT17proxIC}
can be used to find a minimum of $g$ with $C(n_{x}) \log_{2} K_{x}$ evaluations of $g$,
where $C(n_{x})$ is superexponential in $n_{x}$.
The evaluation of $g(x)$ itself amounts to minimizing 
$f(x,y)$ over $y$, which can be done 
in polynomial time in $n_{y}$ and $\log_{2} K_{y}$
using ${\rm L}^{\natural}$- or ${\rm M}^{\natural}$-convex 
function minimization algorithms \cite{Mdcasiam}.
Concerning the above-mentioned conditions on $f(x,y)$,
the following facts are known for a quadratic function
$f(x,y)$ represented as
$f(x,y) = (x,y) 
{\small\scriptsize
\left[ \begin{array}{cc}
Q_{xx} & Q_{xy}   \\
Q_{yx} & Q_{yy}   
\end{array}\right]  }
{\small\footnotesize
\left(\begin{array}{c}
x \\ y   
\end{array}\right)  }
+ (c_{x}, c_{y}) 
{\small\footnotesize
\left(\begin{array}{c}
x \\ y   
\end{array}\right) }
$
with a symmetric matrix
$ Q  = {\small\scriptsize
\left[ \begin{array}{cc}
Q_{xx} & Q_{xy}   \\
Q_{yx} & Q_{yy}   
\end{array}\right] }
$.
Such $f$ is integrally convex in $(x,y)$ if 
$Q$ is diagonally dominant with nonnegative diagonals 
(i.e., $q_{ii} \geq \sum_{j \not= i} |q_{ij}|$ for all $i$)
\cite{FT90};
$f$ is ${\rm L}^{\natural}$-convex in $y$
if and only if 
$Q_{yy} = (q\sp{yy}_{ij} )$  is diagonally dominant with nonnegative diagonals
(i.e., $q\sp{yy}_{ii} \geq \sum_{j \not= i} |q\sp{yy}_{ij}|$ for all $i$)
and 
$q\sp{yy}_{ij}  \leq 0 $ for all $i \not= j$ 
\cite{Mdcasiam};
and 
$f$ is ${\rm M}^{\natural}$-convex in $y$
if and only if 
$Q_{yy}$ satisfies 
$q\sp{yy}_{ij}  \geq 0 $ for all $(i,j)$ and
$q\sp{yy}_{ij}  \geq \min ( q\sp{yy}_{ik} ,  q\sp{yy}_{jk} )$
if 
$\{ i,j \} \cap \{ k \} = \emptyset $ 
\cite{Mbonn09}.
\finbox
\end{remark}

\subsection{Projection of discrete midpoint convex functions}
\label{SCprojdirIC}

We begin with sets.

\begin{theorem} \label{THdicvsetproj}
The projection of a discrete midpoint convex set
is a discrete midpoint convex set.
\end{theorem}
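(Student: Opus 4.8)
The plan is to verify the defining implication (\ref{dirintcnvsetdef}) directly for the projected set $T$, exploiting the fact that both the componentwise roundings $\lceil\cdot\rceil$, $\lfloor\cdot\rfloor$ and the $\ell_{\infty}$-norm are well behaved under appending and deleting coordinates. In contrast to the proof of Theorem~\ref{THicvsetproj}, no passage through convex hulls (and hence no analogue of Lemma~\ref{LMprojconvhull}) is needed, because discrete midpoint convexity of a set is a purely combinatorial, pairwise condition.

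First I would let $T \subseteq \ZZ^{n}$ be the projection of a discrete midpoint convex set $S \subseteq \ZZ^{n+m}$ and take any $x^{(1)}, x^{(2)} \in T$ with $\| x^{(1)} - x^{(2)} \|_{\infty} \geq 2$. By the definition of the projection there exist $y^{(1)}, y^{(2)} \in \ZZ^{m}$ with $(x^{(1)}, y^{(1)}), (x^{(2)}, y^{(2)}) \in S$. The key observation is the inequality
\[
 \| (x^{(1)}, y^{(1)}) - (x^{(2)}, y^{(2)}) \|_{\infty}
 = \max \{ \| x^{(1)} - x^{(2)} \|_{\infty}, \ \| y^{(1)} - y^{(2)} \|_{\infty} \}
 \geq \| x^{(1)} - x^{(2)} \|_{\infty} \geq 2 ,
\]
so the lifted pair $(x^{(1)}, y^{(1)})$, $(x^{(2)}, y^{(2)})$ still satisfies the hypothesis of discrete midpoint convexity of $S$. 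Applying (\ref{dirintcnvsetdef}) to $S$ gives that $\lceil ((x^{(1)},y^{(1)}) + (x^{(2)},y^{(2)}))/2 \rceil$ and $\lfloor ((x^{(1)},y^{(1)}) + (x^{(2)},y^{(2)}))/2 \rfloor$ both lie in $S$. Since rounding is performed componentwise, these two points are precisely $(\lceil (x^{(1)}+x^{(2)})/2 \rceil, \lceil (y^{(1)}+y^{(2)})/2 \rceil)$ and $(\lfloor (x^{(1)}+x^{(2)})/2 \rfloor, \lfloor (y^{(1)}+y^{(2)})/2 \rfloor)$; projecting onto the first $n$ coordinates and using the definition of $T$ then yields $\lceil (x^{(1)}+x^{(2)})/2 \rceil \in T$ and $\lfloor (x^{(1)}+x^{(2)})/2 \rfloor \in T$, which is exactly (\ref{dirintcnvsetdef}) for $T$.

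I do not expect a genuine obstacle here: the only points requiring attention are that the distance hypothesis is inherited by the lifted pair (which it is, since adjoining coordinates cannot decrease the $\ell_{\infty}$-distance) and that rounding commutes with coordinate projection (immediate from its componentwise definition). One should merely note the trivial case $S = \emptyset$, where $T = \emptyset$ is vacuously discrete midpoint convex.
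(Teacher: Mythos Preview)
Your proposal is correct and follows essentially the same argument as the paper's proof: lift $x^{(1)},x^{(2)}$ to points of $S$, note that the $\ell_{\infty}$-distance of the lifted pair is still at least $2$, apply discrete midpoint convexity of $S$, split the componentwise rounding as in (\ref{projup})--(\ref{projdown}), and project back to $T$. The only differences are cosmetic---you spell out the max formula for the $\ell_{\infty}$-norm and mention the empty case explicitly---so the two proofs are the same in substance.
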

\begin{proof}
Let $T \subseteq  \mathbb{Z}\sp{n}$
be the projection (\ref{projsetdef-again}) 
of a discrete midpoint convex set
$S \subseteq  \mathbb{Z}\sp{n+m}$.
To show discrete midpoint convexity (\ref{dirintcnvsetdef}) for $T$,
take $x\sp{(1)}, x\sp{(2)} \in T$ with 
$\| x\sp{(1)} - x\sp{(2)} \|_{\infty} \geq 2$.
By the definition of projection,
we have
$(x\sp{(1)}, y\sp{(1)}) \in S$ and
$(x\sp{(2)}, y\sp{(2)}) \in S$
for some
$y\sp{(1)}, y\sp{(2)} \in \ZZ\sp{m}$.
Since
$\| (x\sp{(1)},y\sp{(1)}) - (x\sp{(2)},y\sp{(2)}) \|_{\infty} \geq 2$,
discrete midpoint convexity (\ref{dirintcnvsetdef}) for $S$
shows
\begin{equation*}  
\left\lceil 
          \frac{(x\sp{(1)},y\sp{(1)})+(x\sp{(2)},y\sp{(2)})}{2} 
    \right\rceil
\in S, 
\qquad
 \left\lfloor 
          \frac{(x\sp{(1)},y\sp{(1)})+(x\sp{(2)},y\sp{(2)})}{2} 
     \right\rfloor
\in S,
\end{equation*}
in which
\begin{align}
 \left\lceil     \frac{(x\sp{(1)},y\sp{(1)})+(x\sp{(2)},y\sp{(2)})}{2}    \right\rceil 
&=
\left( \ 
 \left\lceil    \frac{x\sp{(1)}+x\sp{(2)}}{2}  \right\rceil ,
 \left\lceil    \frac{y\sp{(1)}+y\sp{(2)}}{2}  \right\rceil 
\ \right) ,
\label{projup}
\\
 \left\lfloor     \frac{(x\sp{(1)},y\sp{(1)})+(x\sp{(2)},y\sp{(2)})}{2}    \right\rfloor 
&=
\left( \ 
 \left\lfloor    \frac{x\sp{(1)}+x\sp{(2)}}{2}  \right\rfloor ,
 \left\lfloor    \frac{y\sp{(1)}+y\sp{(2)}}{2}  \right\rfloor 
\ \right). 
\label{projdown}
\end{align}
Therefore,
$\left\lceil 
          \frac{x\sp{(1)}+x\sp{(2)}}{2} 
    \right\rceil \in T$
and
$\left\lfloor 
          \frac{x\sp{(1)}+x\sp{(2)}}{2} 
     \right\rfloor \in T$.
Hence (\ref{dirintcnvsetdef}) holds for $T$.
\end{proof}

For functions we have the following theorems,
the first for the global version 
of discrete midpoint convex functions,
and the second for the local version. 
The proof for the local version relies on the 
parallelogram inequality in Theorem~\ref{THparallelineqgen-var}.

\begin{theorem} \label{THsdicvproj}
The projection of a globally discrete midpoint convex function
is a globally discrete midpoint convex function.
\end{theorem}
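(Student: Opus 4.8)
The plan is to mimic the structure of the proof of Theorem~\ref{THicvproj}, but working directly with the discrete midpoint convexity inequality (\ref{lnatfmidconv}) rather than the distance-2 local condition of Theorem~\ref{THfavtarProp33}. Let $f: \ZZ\sp{n+m} \to \RR \cup \{+\infty\}$ be globally discrete midpoint convex and let $g$ be its projection to $\ZZ\sp{n}$. First I would record that $\dom g$ is the projection of $\dom f$, which is a discrete midpoint convex set, so by Theorem~\ref{THdicvsetproj} the set $\dom g$ is discrete midpoint convex; this handles the domain part of the definition of a globally discrete midpoint convex function. It then remains to verify (\ref{lnatfmidconv}) for $g$ at every pair $x\sp{(1)}, x\sp{(2)} \in \ZZ\sp{n}$ with $\|x\sp{(1)} - x\sp{(2)}\|_{\infty} \geq 2$.

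The core of the argument is an $\varepsilon$-approximation just as in Theorem~\ref{THicvproj}. Given such $x\sp{(1)}, x\sp{(2)}$ and any $\varepsilon > 0$, choose $y\sp{(1)}, y\sp{(2)} \in \ZZ\sp{m}$ with $g(x\sp{(i)}) \geq f(x\sp{(i)}, y\sp{(i)}) - \varepsilon$ for $i = 1,2$. The key observation is the identity already used in the proof of Theorem~\ref{THdicvsetproj}, namely that the integer rounding of the midpoint in $\ZZ\sp{n+m}$ splits coordinatewise as in (\ref{projup})--(\ref{projdown}):
\begin{align*}
\left\lceil \frac{(x\sp{(1)},y\sp{(1)}) + (x\sp{(2)},y\sp{(2)})}{2} \right\rceil
&= \left( \left\lceil \frac{x\sp{(1)}+x\sp{(2)}}{2} \right\rceil, \left\lceil \frac{y\sp{(1)}+y\sp{(2)}}{2} \right\rceil \right), \\
\left\lfloor \frac{(x\sp{(1)},y\sp{(1)}) + (x\sp{(2)},y\sp{(2)})}{2} \right\rfloor
&= \left( \left\lfloor \frac{x\sp{(1)}+x\sp{(2)}}{2} \right\rfloor, \left\lfloor \frac{y\sp{(1)}+y\sp{(2)}}{2} \right\rfloor \right).
\end{align*}
Since $\|(x\sp{(1)},y\sp{(1)}) - (x\sp{(2)},y\sp{(2)})\|_{\infty} \geq \|x\sp{(1)} - x\sp{(2)}\|_{\infty} \geq 2$, global discrete midpoint convexity of $f$ applies to this pair, giving $f(x\sp{(1)},y\sp{(1)}) + f(x\sp{(2)},y\sp{(2)}) \geq f(p,q) + f(p',q')$, where $(p,q)$ and $(p',q')$ are the rounded-up and rounded-down midpoints above. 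Now bound each term on the right using $f(p,q) \geq g(p)$ and $f(p',q') \geq g(p')$, where $p = \lceil (x\sp{(1)}+x\sp{(2)})/2 \rceil$ and $p' = \lfloor (x\sp{(1)}+x\sp{(2)})/2 \rfloor$. Combining with the choice of $y\sp{(i)}$ yields
\[
g(x\sp{(1)}) + g(x\sp{(2)}) \geq f(x\sp{(1)},y\sp{(1)}) + f(x\sp{(2)},y\sp{(2)}) - 2\varepsilon \geq g\left(\left\lceil \tfrac{x\sp{(1)}+x\sp{(2)}}{2} \right\rceil\right) + g\left(\left\lfloor \tfrac{x\sp{(1)}+x\sp{(2)}}{2} \right\rfloor\right) - 2\varepsilon,
\]
and letting $\varepsilon \downarrow 0$ gives (\ref{lnatfmidconv}) for $g$.

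I do not expect a serious obstacle here: unlike the local version (Theorem~\ref{THwdicvproj}), which needs the parallelogram inequality of Theorem~\ref{THparallelineqgen-var} because the lifted pair $(x\sp{(i)},y\sp{(i)})$ may have $\ell_{\infty}$-distance strictly greater than $2$ even when $\|x\sp{(1)}-x\sp{(2)}\|_{\infty} = 2$, the global hypothesis covers all pairs at distance $\geq 2$ uniformly, so the lifted inequality is immediately available. The only point requiring a little care is the coordinatewise splitting of the rounding operators, which is valid because rounding acts componentwise and the $x$- and $y$-blocks of $(x\sp{(i)},y\sp{(i)})$ occupy disjoint coordinate sets; this is exactly the computation (\ref{projup})--(\ref{projdown}) already carried out in the proof of Theorem~\ref{THdicvsetproj}, so it can simply be cited. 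One should also note at the outset that $g$ is in particular integrally convex (by Theorem~\ref{THicvproj}), so $g > -\infty$ everywhere is consistent and the local convex extension machinery is not needed in this proof at all.
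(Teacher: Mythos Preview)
Your proposal is correct and follows essentially the same route as the paper's proof: lift to $\ZZ\sp{n+m}$ via an $\varepsilon$-approximation, apply global discrete midpoint convexity of $f$ to the pair $(x\sp{(i)},y\sp{(i)})$ (using $\|(x\sp{(1)},y\sp{(1)})-(x\sp{(2)},y\sp{(2)})\|_{\infty}\geq 2$), invoke the coordinatewise splitting (\ref{projup})--(\ref{projdown}), and push back down to $g$. One small remark: the definition of \emph{global} discrete midpoint convexity in this paper consists only of the inequality (\ref{lnatfmidconv}) for $\|x-y\|_{\infty}\geq 2$ and does not include a separate domain condition, so your appeal to Theorem~\ref{THdicvsetproj} for $\dom g$ is harmless but unnecessary (the paper's proof omits it).
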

\begin{proof}
Let $g$ be the projection  (\ref{projfndef-again})
of a globally discrete midpoint convex function $f$.
To show discrete midpoint convexity (\ref{lnatfmidconv}) for $g$, 
take $x\sp{(1)}, x\sp{(2)} \in \dom g$
with
$\| x\sp{(1)} - x\sp{(2)} \|_{\infty} \geq 2$
and any  $\varepsilon > 0$.
By the definition of projection,
there exist
$y\sp{(1)}, y\sp{(2)} \in \ZZ\sp{m}$
such that
$g(x\sp{(i)}) \geq  f(x\sp{(i)}, y\sp{(i)}) - \varepsilon$  for $i=1,2$,
which implies
\begin{equation}  \label{strprf8} 
 g(x\sp{(1)}) + g(x\sp{(2)}) \geq  
f(x\sp{(1)}, y\sp{(1)}) + f(x\sp{(2)}, y\sp{(2)}) - 2 \varepsilon. 
\end{equation}
Noting
$\| (x\sp{(1)},y\sp{(1)}) - (x\sp{(2)},y\sp{(2)}) \|_{\infty} \geq 2$,
we use discrete midpoint convexity (\ref{lnatfmidconv}) of $f$,
as well as (\ref{projup}) and (\ref{projdown}), to obtain
\begin{align}
& f(x\sp{(1)},y\sp{(1)}) + f(x\sp{(2)},y\sp{(2)}) 
\notag \\
& \geq
    f \left(\left\lceil 
          \frac{(x\sp{(1)},y\sp{(1)})+(x\sp{(2)},y\sp{(2)})}{2} 
    \right\rceil\right) 
  + f \left(\left\lfloor 
          \frac{(x\sp{(1)},y\sp{(1)})+(x\sp{(2)},y\sp{(2)})}{2} 
     \right\rfloor\right) 
\notag \\
 & \geq 
    g \left(\left\lceil 
          \frac{x\sp{(1)}+x\sp{(2)}}{2} 
    \right\rceil\right) 
  + g \left(\left\lfloor 
          \frac{x\sp{(1)}+x\sp{(2)}}{2} 
     \right\rfloor\right) . 
  \label{strprf31112} 
\end{align}
The combination of (\ref{strprf8}) and (\ref{strprf31112}) yields
\begin{equation*}  
 g(x\sp{(1)}) + g(x\sp{(2)})
 \geq 
    g \left(\left\lceil 
          \frac{x\sp{(1)}+x\sp{(2)}}{2} 
    \right\rceil\right) 
  + g \left(\left\lfloor 
          \frac{x\sp{(1)}+x\sp{(2)}}{2} 
     \right\rfloor\right) 
 - 2 \varepsilon .  
\end{equation*}
This implies (\ref{lnatfmidconv}) for $g$, 
since $\varepsilon > 0$ is arbitrary.
\end{proof}

\begin{theorem} \label{THwdicvproj}
The projection of a locally discrete midpoint convex function
is a locally discrete midpoint convex function.
\end{theorem}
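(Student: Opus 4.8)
The plan is to follow the scheme of the proofs of Theorems~\ref{THicvproj} and~\ref{THsdicvproj}. Since $\dom g$ is the projection of $\dom f$, which is a discrete midpoint convex set (because $f$ is locally discrete midpoint convex), Theorem~\ref{THdicvsetproj} shows that $\dom g$ is discrete midpoint convex; this settles the first of the two requirements for local discrete midpoint convexity. For the second, I must verify the midpoint inequality (\ref{lnatfmidconv}) for $g$ for every pair $x^{(1)}, x^{(2)} \in \dom g$ with $\| x^{(1)} - x^{(2)} \|_{\infty} = 2$. Fixing such a pair, putting $\bar x = (x^{(1)}+x^{(2)})/2$, and choosing for given $\varepsilon > 0$ integer vectors $y^{(1)}, y^{(2)}$ with $f(x^{(i)},y^{(i)}) \le g(x^{(i)}) + \varepsilon$, the argument that produced (\ref{strprf8}) reduces the problem to the key inequality $f(x^{(1)},y^{(1)}) + f(x^{(2)},y^{(2)}) \ge g(\lceil \bar x \rceil) + g(\lfloor \bar x \rfloor)$, after which $\varepsilon \downarrow 0$ finishes. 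Writing $p = (x^{(1)},y^{(1)})$ and $q = (x^{(2)},y^{(2)})$, the new feature compared with Theorem~\ref{THsdicvproj} is that $\|p-q\|_{\infty} = \max\{2,\|y^{(1)}-y^{(2)}\|_{\infty}\}$ may exceed $2$, so the midpoint convexity of $f$ cannot be applied directly at $p$ and $q$.

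To deal with this I would repeatedly apply the parallelogram inequality (Theorem~\ref{THparallelineqgen-var}) to bring $\|p-q\|_{\infty}$ down to $2$ without increasing $f(p)+f(q)$. Each step replaces $(p,q)$ by $(p+d,\,q-d)$, where $d = \sum_{k \in J}(\bm{1}_{A_{k}} - \bm{1}_{B_{k}})$ is built from the decomposition (\ref{DICdecAkBksum})--(\ref{DICdecAkBkdef}) of $q-p$. Two bookkeeping facts drive the choice of $J$: the $x$-parts of the two points always keep summing to $x^{(1)}+x^{(2)}$, and coordinatewise $|(q-p)_{i} - 2d_{i}| \le |(q-p)_{i}|$, so the $x$-span never exceeds $2$ and no coordinate is enlarged. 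Concretely, while $\|p-q\|_{\infty} \ge 5$ I would take $J = \{3\}$: there the $x$-coordinates satisfy $|(q-p)_{i}| \le 2$, which is both below the threshold $\|p-q\|_{\infty}-2$ defining membership in $A_{3}$ and above $-3$, so those coordinates are left untouched, while $\|p-q\|_{\infty}$ drops by exactly $2$. After finitely many such steps $\|p-q\|_{\infty} \in \{2,3,4\}$, still with $x$-parts equal to $x^{(1)}$ and $x^{(2)}$.

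When $\|p-q\|_{\infty}$ is even (equal to $2$ or $4$) I would apply the parallelogram inequality with $J = \{2,4,\dots,\|p-q\|_{\infty}\}$; this choice makes $d = \lceil (q-p)/2 \rceil$, hence sends $(p,q)$ to $\bigl(\lceil (p+q)/2 \rceil,\ \lfloor (p+q)/2 \rfloor\bigr)$, whose $x$-parts are $\lceil \bar x \rceil$ and $\lfloor \bar x \rfloor$ by (\ref{projup}), (\ref{projdown}), so the definition of $g$ yields $f(p)+f(q) \ge f(\lceil (p+q)/2 \rceil)+f(\lfloor (p+q)/2 \rfloor) \ge g(\lceil \bar x \rceil)+g(\lfloor \bar x \rfloor)$ (for $\|p-q\|_{\infty}=2$ this is just local discrete midpoint convexity of $f$ combined with the definition of $g$, exactly as in (\ref{strprf31112})). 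The remaining case $\|p-q\|_{\infty}=3$ is the delicate one: since $\|x^{(1)}-x^{(2)}\|_{\infty}=2$, some coordinate $i_{0}$ of $x^{(2)}-x^{(1)}$ equals $+2$ or $-2$, and taking $J=\{1\}$ in the former case and $J=\{3\}$ in the latter, a short coordinatewise check shows that the displacement at $i_{0}$ stays $\pm 2$ while every other coordinate gets displacement of absolute value at most $2$. Thus the new pair is at $\ell_{\infty}$-distance exactly $2$ with $x$-parts still summing to $x^{(1)}+x^{(2)}$, and the even case above applies to it.

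The main obstacle — and the reason locality forces us to stop the reduction at distance $2$ rather than letting the displacement collapse further — is precisely this $\|p-q\|_{\infty}=3$ step: a reduction that overshot to $\|p-q\|_{\infty}\le 1$ would leave the two points inside a single unit cube, where neither the midpoint property of $f$ nor the even case is available, and the key inequality can genuinely fail there (a small example of a locally discrete midpoint convex $f$ on a subset of $\{0,1\}^{3}$ for which $g(\lceil\bar x\rceil)+g(\lfloor\bar x\rfloor)>f(p)+f(q)$ at a distance-$1$ pair is easy to write down). Verifying, coordinate by coordinate, that the choices $J=\{1\}$, $J=\{3\}$, and $J=\{2,4,\dots\}$ behave as claimed — with the boundary values $\|p-q\|_{\infty}=3$ and $\|p-q\|_{\infty}=4$ demanding the most attention — is the technical core of the argument; everything else is routine bookkeeping along the templates of Theorems~\ref{THicvproj} and~\ref{THsdicvproj}.
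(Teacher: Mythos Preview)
Your proof is correct and follows essentially the same approach as the paper: both rely on the parallelogram inequality (Theorem~\ref{THparallelineqgen-var}) to bring $\|p-q\|_{\infty}$ down to $2$ while preserving $p+q$, with the odd case $\|p-q\|_{\infty}=3$ handled by choosing $J=\{1\}$ or $J=\{3\}$ according to the sign of a coordinate of $x^{(2)}-x^{(1)}$ with absolute value $2$. The paper organizes things slightly differently---it first reduces to a one-dimensional projection ($m=1$), replaces your explicit iteration by a minimality argument on $|z^{(1)}-z^{(2)}|$, and in the case $\|p-q\|_{\infty}=4$ uses a single parallelogram step with $J=\{1\}$ followed by the midpoint inequality rather than your one-shot choice $J=\{2,4\}$---but these are organizational variations on the same idea.
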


\begin{proof}
Let $g(x)$ be the projection (\ref{projfndef-again})
of a locally discrete midpoint convex function $f(x,y)$,
where $y$ is $m$-dimensional.
We may assume $m=1$, since a one-dimensional projection 
repeated $m$ times amounts to an $m$-dimensional projection.
First, $\dom g$ is a discrete midpoint convex set
by Theorem~\ref{THdicvsetproj}.

To show discrete midpoint convexity (\ref{lnatfmidconv}) for $g$, 
take $x\sp{(1)}, x\sp{(2)} \in \dom g$
with
$\| x\sp{(1)} - x\sp{(2)} \|_{\infty} = 2$
and any  $\varepsilon > 0$.
By the definition of projection,
there exist
$y\sp{(1)}, y\sp{(2)} \in \ZZ$
such that
$g(x\sp{(i)}) \geq  f(x\sp{(i)}, y\sp{(i)}) - \varepsilon$  for $i=1,2$,
which implies
\begin{equation}  \label{strprf8w} 
 g(x\sp{(1)}) + g(x\sp{(2)}) \geq  
f(x\sp{(1)}, y\sp{(1)}) + f(x\sp{(2)}, y\sp{(2)}) - 2 \varepsilon .
\end{equation}
Take $z\sp{(1)}, z\sp{(2)} \in \ZZ$
that minimize
$|z\sp{(1)} - z\sp{(2)}|$
subject to
\begin{equation}  \label{strprf9w} 
 g(x\sp{(1)}) + g(x\sp{(2)}) \geq  
f(x\sp{(1)}, z\sp{(1)}) + f(x\sp{(2)}, z\sp{(2)}) - 2 \varepsilon .
\end{equation}
Such $(z\sp{(1)}, z\sp{(2)} )$ exists by (\ref{strprf8w}).
We may assume $z\sp{(2)} - z\sp{(1)} \geq 0$
by interchanging $(x\sp{(1)},z\sp{(1)})$ and 
$(x\sp{(2)},z\sp{(2)})$ if necessary.

Consider the decomposition of 
$(x\sp{(2)},z\sp{(2)}) - (x\sp{(1)},z\sp{(1)})$
into vectors of $\{ -1,0,+1 \}\sp{n+1}$ 
as in (\ref{DICdecAkBksum}):
\begin{equation}  \label{strprf1A1B} 
(x\sp{(2)},z\sp{(2)}) - (x\sp{(1)},z\sp{(1)})
=\sum_{k=1}^{m} (\bm{1}_{A_{k}} - \bm{1}_{B_{k}}),
\end{equation}
where
$m =\| (x\sp{(1)},z\sp{(1)}) - (x\sp{(2)},z\sp{(2)}) \|_{\infty} 
= \max(2 , z\sp{(2)} - z\sp{(1)})$.
It should be clear that the components of the vector
$(x\sp{(i)},z\sp{(i)})$
are numbered by $1,2,\ldots,n$ and $n+1$,
and accordingly, $A_{k}, B_{k} \subseteq \{ 1,2,\ldots,n, n+1 \}$.

\medskip

Claim: \  $z\sp{(2)} - z\sp{(1)} \leq 4$.
\\
(Proof)
To prove the claim by contradiction, suppose that
 $z\sp{(2)} - z\sp{(1)} \geq 5$.
Then we have $m \geq 5$,
$A_{k}=\{ n+1 \}$ for $1 \leq k \leq m-2$, and
$B_{k}=\emptyset$ for $3 \leq k \leq m$.
Hence
$A_{3}=\{ n+1 \}$ and $B_{3}=\emptyset$
since $m \geq 5$.
By parallelogram inequality (\ref{paraineqd1d2gen})
for $f$ with
$x = (x\sp{(1)},z\sp{(1)})$,
$y = (x\sp{(2)},z\sp{(2)})$, and
$d = \bm{1}_{A_{3}} - \bm{1}_{B_{3}} = ( \bm{0}, 1)$,
we obtain
\begin{equation}  \label{strprf10w} 
f(x\sp{(1)}, z\sp{(1)}) + f(x\sp{(2)}, z\sp{(2)}) \geq 
f(x\sp{(1)}, z\sp{(1)}+1) + f(x\sp{(2)}, z\sp{(2)}-1) .
\end{equation}
This is a contradiction to the choice of
$(z\sp{(1)},z\sp{(2)})$, since
$(z\sp{(1)}+1, z\sp{(2)}-1)$ satisfies (\ref{strprf9w}) by (\ref{strprf10w})
and
$|(z\sp{(1)}+1) - (z\sp{(2)}-1)| = |z\sp{(1)} - z\sp{(2)}|-2$. 
Thus the claim is proved.

\medskip

We consider three cases, according to the value of 
$z\sp{(2)} - z\sp{(1)} \in \{ 0,1,2,3, 4 \}$.

{\bf Case 1} ($0 \leq z\sp{(2)} - z\sp{(1)} \leq 2$):
In this case we have 
$\| (x\sp{(1)},z\sp{(1)}) - (x\sp{(2)},z\sp{(2)}) \|_{\infty} = 2$,
which allows us to use
discrete midpoint convexity (\ref{lnatfmidconv})
to obtain
\begin{align} 
& f(x\sp{(1)},z\sp{(1)}) + f(x\sp{(2)},z\sp{(2)}) 
\notag \\
& \geq
    f \left(\left\lceil 
          \frac{(x\sp{(1)},z\sp{(1)})+(x\sp{(2)},z\sp{(2)})}{2} 
    \right\rceil\right) 
  + f \left(\left\lfloor 
          \frac{(x\sp{(1)},z\sp{(1)})+(x\sp{(2)},z\sp{(2)})}{2} 
     \right\rfloor\right) 
\notag \\
 & \geq 
    g \left(\left\lceil 
          \frac{x\sp{(1)}+x\sp{(2)}}{2} 
    \right\rceil\right) 
  + g \left(\left\lfloor 
          \frac{x\sp{(1)}+x\sp{(2)}}{2} 
     \right\rfloor\right) ,
 \label{strprf31112w} 
\end{align}
where the first inequality is discrete midpoint convexity,
and the second inequality is by the definition of
projection as well as 
the identities (\ref{projup}) and (\ref{projdown}) 
with $y\sp{(i)}$ replaced by $z\sp{(i)}$.
Then it follows from 
(\ref{strprf9w}) and (\ref{strprf31112w}) that
\begin{equation}  \label{strprf5w} 
 g(x\sp{(1)}) + g(x\sp{(2)})
 \geq 
    g \left(\left\lceil 
          \frac{x\sp{(1)}+x\sp{(2)}}{2} 
    \right\rceil\right) 
  + g \left(\left\lfloor 
          \frac{x\sp{(1)}+x\sp{(2)}}{2} 
     \right\rfloor\right) 
 - 2 \varepsilon  .
\end{equation}
This implies discrete midpoint convexity (\ref{lnatfmidconv}) for $g$, 
since $\varepsilon > 0$ is arbitrary.

{\bf Case 2} ($z\sp{(2)} - z\sp{(1)} = 3$):
In this case we have $m=3$.
With the notation
$X(p) = 
\{ i \mid 1 \leq i \leq n, \  x\sp{(2)}_{i} - x\sp{(1)}_{i} = p \}$
for $p \in \{ -2,-1, 1, 2 \}$, 
we have
\[
\begin{array}{lll}
A_{1}= \{ n+1 \}, & \quad &B_{1}= X(-1) \cup X(-2),
\\
A_{2}= \{ n+1 \} \cup X(1),& &B_{2}= X(-2),
\\
A_{3}= \{ n+1 \} \cup X(1) \cup X(2), & & B_{3}= \emptyset ,
\end{array}
\]
where
$X(2) \cup X(-2) \not= \emptyset $
by $\| x\sp{(1)} - x\sp{(2)} \|_{\infty} = 2$.
Define
$d = \bm{1}_{A_{1}} - \bm{1}_{B_{1}}$
if $X(2) \not= \emptyset $; 
and
$d = \bm{1}_{A_{3}} - \bm{1}_{B_{3}}$
otherwise.
We denote $d=(b,1)$ with $b \in \ZZ\sp{n}$.
By parallelogram inequality (\ref{paraineqd1d2gen})
for $f$ with
$x = (x\sp{(1)},z\sp{(1)})$,
$y = (x\sp{(2)},z\sp{(2)})$, and $d$ above,
we obtain
\begin{equation}  \label{strprf10w3} 
f(x\sp{(1)}, z\sp{(1)}) + f(x\sp{(2)}, z\sp{(2)}) \geq 
f(x\sp{(1)}+b, z\sp{(1)}+1) + f(x\sp{(2)}-b, z\sp{(2)}-1) .
\end{equation}
Here we have
$\| (x\sp{(1)}+b, z\sp{(1)}+1) 
- (x\sp{(2)}-b, z\sp{(2)}-1) \|_{\infty} =2$,
since
$| (z\sp{(1)}+1) - (z\sp{(2)}-1) | =1$
and
$| (x\sp{(1)}+b)_{i} - (x\sp{(2)}-b)_{i} | \leq 2$
with equality for some $i \in X(2) \cup X(-2)$
by the choice of $d$.
This allows us to use
discrete midpoint convexity (\ref{lnatfmidconv})
to obtain
\begin{equation}  \label{strprf11w3} 
\mbox{RHS of (\ref{strprf10w3})} 
 \geq
    f \left(\left\lceil 
          \frac{(x\sp{(1)},z\sp{(1)})+(x\sp{(2)},z\sp{(2)})}{2} 
    \right\rceil\right) 
  + f \left(\left\lfloor 
          \frac{(x\sp{(1)},z\sp{(1)})+(x\sp{(2)},z\sp{(2)})}{2} 
     \right\rfloor\right) .
\end{equation}
The combination of (\ref{strprf10w3}) and 
(\ref{strprf11w3})
yields
(\ref{strprf31112w}).
The rest of the proof is the same as in Case 1.

{\bf Case 3} ($z\sp{(2)} - z\sp{(1)} = 4$):
In this case we have $m=4$.
With the notation $X(p)$ introduced in Case 2
we have
\[
\begin{array}{lll}
A_{1}= \{ n+1 \}, & \quad &B_{1}= X(-1) \cup X(-2),
\\
A_{2}= \{ n+1 \},& &B_{2}= X(-2),
\\
A_{3}= \{ n+1 \} \cup X(1) , & & B_{3}= \emptyset ,
\\
A_{4}= \{ n+1 \} \cup X(1) \cup X(2), & & B_{4}= \emptyset .
\end{array}
\]
Define
$d = \bm{1}_{A_{1}} - \bm{1}_{B_{1}}$
and denote $d=(b,1)$ with $b \in \ZZ\sp{n}$.
By parallelogram inequality (\ref{paraineqd1d2gen})
for $f$ with
$x = (x\sp{(1)},z\sp{(1)})$,
$y = (x\sp{(2)},z\sp{(2)})$, and $d$ above,
we obtain
(\ref{strprf10w3}),
in which
$\| (x\sp{(1)}+b, z\sp{(1)}+1) 
- (x\sp{(2)}-b, z\sp{(2)}-1) \|_{\infty} =2$
since $| (z\sp{(1)}+1) - (z\sp{(2)}-1) | =2$.
The rest of the proof is the same as in Case 2.
\end{proof}

\begin{remark} \rm 
Theorem~\ref{THdicvsetproj} for discrete midpoint convex sets
can be proved as a special case of Theorem~\ref{THsdicvproj} 
for globally discrete midpoint convex functions,
since a set $S$ is discrete midpoint convex
if and only if its indicator function $\delta_{S}$ 
is globally discrete midpoint convex.
In this paper, however, we have given a direct proof
to Theorem~\ref{THdicvsetproj},
which is shorter and more transparent.
It is emphasized that Theorem~\ref{THicvsetproj} 
for integrally convex sets cannot be proved as a special case of 
Theorem~\ref{THicvproj} for integrally convex functions,
since the proof of the latter depends on the former.
\finbox
\end{remark}

\section{Convolution Operation}
\label{SCconvol}

Recall that the Minkowski sum 
of $S_{1}$, $S_{2} \subseteq \ZZ\sp{n}$ 
is defined by
\begin{equation} \label{minkowsumZdef-again}
S_{1}+S_{2} = \{ y + z \mid y \in S_{1}, z \in S_{2} \} ,
\end{equation}
and the convolution of
$f_{1}, f_{2}: \ZZ\sp{n} \to \RR \cup \{ +\infty \}$
is defined by
\begin{equation} \label{f1f2convdef-again}
(f_{1} \convZ f_{2})(x) =
 \inf\{ f_{1}(y) + f_{2}(z) \mid x= y + z, \  y, z \in \ZZ\sp{n}  \}
\quad (x \in \ZZ\sp{n}) ,
\end{equation}
where it is assumed that the infimum on the right-hand side
is bounded from below (i.e., $\not= -\infty$) for every $x \in \ZZ\sp{n}$.
We have
\begin{equation}  \label{f1f2convdom}
 \dom(f_{1} \convZ f_{2}) = \dom f_{1} + \dom f_{2}  .
\end{equation}
Let
$\delta_{S_{1}}, \delta_{S_{2}} : \ZZ\sp{n} \to \{0, + \infty\}$ 
be the indicator functions of $S_{1}, S_{2} \subseteq \ZZ\sp{n}$, respectively.
Then their convolution $\delta_{S_{1}} \convZ \delta_{S_{2}}$ 
coincides with the indicator function $\delta_{S_{1}+S_{2}}$ of 
the Minkowski sum $S_{1}+S_{2}$, i.e., 
\begin{equation}  \label{setconvolMinkow}
 \delta_{S_{1}} \convZ \delta_{S_{1}} = \delta_{S_{1}+S_{2}}.
\end{equation}

\subsection{Results for integrally convex functions}
\label{SCicconvolresult}

It is known \cite{Mdcasiam,MS01rel}
that the convolution of two integrally convex functions
is not necessarily integrally convex.
This is demonstrated
 by the following example \cite[Example 3.15]{Mdcasiam}
showing that the Minkowski sum of integrally convex sets
is not necessarily integrally convex.

\begin{example} \rm \label{EXicdim2sumhole}
The Minkowski sum of
$S_{1} = \{ (0,0), (1,1) \}$
and
$S_{2} = \{ (1,0), (0,1) \}$
is equal to 
$S_{1}+S_{2} = \{ (1,0), (0,1), (2,1), (1,2) \}$,
which has a ``hole'' at $(1,1)$, i.e.,
$(1,1) \in \overline{S_{1}+S_{2}}$ and
$(1,1) \not\in S_{1}+S_{2}$.
Both $S_{1}$ and $S_{2}$
are integrally convex,
but $S_{1}+S_{2}$ is not integrally convex.
\finbox
\end{example}

Thus we are motivated to consider
the convolution of an integrally convex function
and a separable convex function.
We denote a separable convex function by $\varphi$, i.e.,
\begin{equation} \label{sepconvdef1}
\varphi(x) = \sum_{i=1}\sp{n} \varphi_{i}(x_{i}) ,
\end{equation}
where
$\varphi_{i}: \ZZ \to \RR \cup \{ +\infty \}$
is a univariate discrete convex function for $i=1,2,\ldots,n$.
Also we always use $B$ to denote an integer interval (or box),
i.e.,
$B = [a,b]_{\ZZ}$
for some integer vectors 
$a \in (\ZZ \cup \{ -\infty \})\sp{n}$ and 
$b \in (\ZZ \cup \{ +\infty \})\sp{n}$ 
with $a \leq b$.

The following theorems are the main results of this section,
dealing with sets and functions, respectively.

\begin{theorem} \label{THicsetbox}
The Minkowski sum $S+B$ of an integrally convex set $S$ and an integer interval $B$
is an integrally convex set.
\end{theorem}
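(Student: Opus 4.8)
The plan is to verify integral convexity of $S+B$ directly from the definition: for an arbitrary $x \in \RR^n$ with $x \in \overline{S+B}$, we must exhibit $x$ as a convex combination of points of $(S+B)\cap N(x)$. First I would reduce to the box being a single coordinate segment. Write $B = [a,b]_\ZZ$; since the Minkowski sum with a box decomposes coordinatewise as a composition of sums with ``slab'' boxes $[a_i\unitvec{i}, b_i\unitvec{i}]_\ZZ$ (possibly with $\pm\infty$ endpoints), and a finite composition of integral-convexity-preserving operations is again such, it suffices to prove the statement when $B$ is one-dimensional, i.e.\ $B = [0, \beta] \unitvec{1}$ for some $\beta \in \ZZ_{\geq 0} \cup \{+\infty\}$ (an origin shift, which preserves integral convexity, handles a nonzero lower endpoint). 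Actually, it further suffices to take $\beta = 1$ and then iterate, since $[0,\beta]\unitvec{1} = [0,1]\unitvec{1} + [0,1]\unitvec{1} + \cdots$; so the crux is: \emph{if $S$ is integrally convex, then $S + [0,1]\unitvec{1}$ is integrally convex.}

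\textbf{The one-step case.}
So fix $x \in \overline{S + [0,1]\unitvec{1}}$ with the goal $x \in \overline{(S + [0,1]\unitvec{1}) \cap N(x)}$. By Lemma~\ref{LMprojconvhull}-type reasoning (the convex hull of a Minkowski sum is the Minkowski sum of convex hulls), we have $\overline{S + [0,1]\unitvec{1}} = \overline{S} + [0,1]\unitvec{1}$, so $x = s + t\unitvec{1}$ for some $s \in \overline{S}$ and $t \in [0,1]$. By integral convexity of $S$, $s \in \overline{S \cap N(s)}$, so $s = \sum_k \lambda_k u^{(k)}$ with $u^{(k)} \in S \cap N(s)$, $\lambda_k \geq 0$, $\sum_k \lambda_k = 1$. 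The difficulty is that the points $u^{(k)} + t\unitvec{1}$ need not lie in $N(x)$: rounding in coordinate $1$ is delicate because $x_1 = s_1 + t$ may straddle an integer that $s_1$ does not. The key observation is that $N(x)$ differs from $N(s) + \{0,1\}\unitvec{1}$ only in a controlled way. I would split on whether $\lfloor x_1 \rfloor = \lfloor s_1 \rfloor$ (equivalently $\{s_1\} + t < 1$) or $\lfloor x_1\rfloor = \lfloor s_1\rfloor + 1$. In each case, for every $k$ with $u^{(k)}_1 = \lfloor s_1 \rfloor$ one of $u^{(k)}$ or $u^{(k)} + \unitvec{1}$ lies in $(S+[0,1]\unitvec{1}) \cap N(x)$, and similarly for $u^{(k)}_1 = \lceil s_1 \rceil$; one then re-expresses $x = s + t\unitvec{1}$ by splitting each coefficient $\lambda_k$ appropriately between the two candidate lattice points so that the coordinate-$1$ sum comes out to $x_1$ while keeping all other coordinates equal to those of $u^{(k)}$. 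Concretely, replace $\lambda_k u^{(k)}$ by $(1-\mu_k)\lambda_k\, w^{(k)} + \mu_k \lambda_k\, (w^{(k)} + \unitvec{1})$ with $w^{(k)} \in \{u^{(k)}, u^{(k)} - \unitvec{1}\}$ chosen so that $\{w^{(k)}, w^{(k)}+\unitvec{1}\} \subseteq N(x)$ in coordinate $1$, and choose a single $\mu \in [0,1]$ (independent of $k$ after the casework) with $\sum_k \lambda_k(\text{base}_k + \mu) = x_1$; the other coordinates are automatically correct since $w^{(k)}_j = u^{(k)}_j \in \{\lfloor x_j\rfloor, \lceil x_j \rceil\}$ for $j \neq 1$.

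\textbf{Main obstacle.}
The genuinely delicate point is the bookkeeping in coordinate $1$: verifying that the chosen $w^{(k)}$'s together with their shifts all land in $N(x)$ \emph{and} that a common mixing parameter reproducing $x_1$ exists. This amounts to checking that $x_1$ lies in the interval $[\min_k \text{base}_k, \max_k \text{base}_k + 1]$ where $\text{base}_k \in \{\lfloor x_1\rfloor - 1, \lfloor x_1 \rfloor\}$ after the casework — which follows because $s_1 = \sum_k \lambda_k u^{(k)}_1 \in [\lfloor s_1\rfloor, \lceil s_1 \rceil]$ and $t \in [0,1]$ pin $x_1$ inside the span of the available lattice points. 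I expect this to be a short but careful argument, essentially a one-dimensional interpolation lemma; everything else (the reduction to $\beta = 1$, the convex-hull identity for Minkowski sums, and the invocation of integral convexity of $S$) is routine given the tools already in the paper.
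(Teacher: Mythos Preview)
Your overall strategy matches the paper's proof (its Lemma~\ref{LMicsetline}) almost exactly: reduce to a one-dimensional segment, write $x=y+z$ with $y\in\overline{S}$ and $z$ in the segment, expand $y$ via the integral convexity of $S$, and then redistribute weights in coordinate~$1$ according to a case split on how $\lfloor x_{1}\rfloor$ sits relative to $\lfloor s_{1}\rfloor$. Your additional reduction to $B=[0,1]\unitvec{1}$ is legitimate and slightly streamlines the bookkeeping (the paper instead keeps the full segment and carries an integer shift $\zeta$ throughout); just note that when an endpoint is $\pm\infty$ you need one extra line saying that an increasing union of integrally convex sets is again integrally convex, since the ``iterate'' then becomes a limit.

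The one place where your sketch does not go through as written is the ``single $\mu$'' construction. In your Case~1 ($\lfloor x_{1}\rfloor=\lfloor s_{1}\rfloor$, say with $s_{1}\notin\ZZ$), for indices $k$ with $u^{(k)}_{1}=\lceil s_{1}\rceil=\lceil x_{1}\rceil$ the shift $u^{(k)}+\unitvec{1}$ has first coordinate $\lceil x_{1}\rceil+1\notin N(x)$, while your alternative $w^{(k)}=u^{(k)}-\unitvec{1}$ is not known to lie in $S+[0,1]\unitvec{1}$ (only $u^{(k)}\in S$ is given). Hence for those $k$ you are forced to put all weight on $u^{(k)}$ alone, i.e.\ $\mu_{k}=0$, whereas for the other group ($u^{(k)}_{1}=\lfloor s_{1}\rfloor$) you need $\mu_{k}>0$ in order to push coordinate~$1$ up from $s_{1}$ to $x_{1}$. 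A parameter common to all $k$ therefore cannot work; you need per-group mixing, or equivalently the paper's device of ordering the $u^{(k)}$ and splitting one coefficient $\lambda_{k_{1}}$ at a threshold. Once you allow that, the required inequality $t\le\sum_{k\le k_{0}}\lambda_{k}$ drops out of $\{s_{1}\}+t<1$, and your argument becomes exactly the paper's.
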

\begin{proof}
The proof is given in Section~\ref{SCprficsetbox}.
\end{proof}

\begin{theorem} \label{THicsepconvol}
The convolution $f \convZ \varphi$ of an integrally convex function $f$
and a separable convex function $\varphi$ is an integrally convex function.
\end{theorem}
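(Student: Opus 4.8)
The plan is to deduce Theorem~\ref{THicsepconvol} from Theorem~\ref{THicsetbox} together with Theorem~\ref{THfavtarProp33} (the local characterization of integral convexity via the distance-two inequality \eqref{intcnvconddist2}), rather than attempting a direct computation with the local convex extension. First I would observe that $\dom(f \convZ \varphi) = \dom f + \dom \varphi$ by \eqref{f1f2convdom}, and that $\dom \varphi$ is an integer interval $B$ (since each $\varphi_i$ has an interval as effective domain); hence by Theorem~\ref{THicsetbox} the effective domain of $g := f \convZ \varphi$ is an integrally convex set. So it remains to verify condition (b) of Theorem~\ref{THfavtarProp33} for $g$: for $x\sp{(1)}, x\sp{(2)} \in \dom g$ with $\|x\sp{(1)} - x\sp{(2)}\|_\infty = 2$, one needs
\[
 \frac{1}{2}\,[\,g(x\sp{(1)}) + g(x\sp{(2)})\,] \geq \tilde g\!\left(\frac{x\sp{(1)}+x\sp{(2)}}{2}\right).
\]

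The key steps after that: Fix $\varepsilon > 0$ and choose near-optimal decompositions $x\sp{(i)} = y\sp{(i)} + z\sp{(i)}$ with $g(x\sp{(i)}) \geq f(y\sp{(i)}) + \varphi(z\sp{(i)}) - \varepsilon$. The natural move is to apply integral convexity of $f$ at the midpoint $w := (y\sp{(1)}+y\sp{(2)})/2$ and separable convexity of $\varphi$ at $(z\sp{(1)}+z\sp{(2)})/2$, then recombine. Separable convex functions are easy: $\tilde\varphi$ coincides with $\varphi$ extended coordinatewise by linear interpolation, so $\frac12[\varphi(z\sp{(1)}) + \varphi(z\sp{(2)})] \geq \tilde\varphi((z\sp{(1)}+z\sp{(2)})/2)$ always holds, and moreover $\tilde\varphi$ of a point is an explicit convex combination over $N((z\sp{(1)}+z\sp{(2)})/2)$. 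For $f$, since $\|y\sp{(1)} - y\sp{(2)}\|_\infty$ need not be $2$, I would instead use global integral convexity of $f$ directly: $\frac12[f(y\sp{(1)})+f(y\sp{(2)})] \geq \tilde f(w)$, and write $\tilde f(w) = \sum_k \lambda_k f(u\sp{(k)})$ with $u\sp{(k)} \in N(w)$, $\sum_k \lambda_k u\sp{(k)} = w$. Combining the two convex representations, one gets a convex combination of pairs $(u\sp{(k)}, p\sp{(j)})$ with $u\sp{(k)} \in N(w)$ and $p\sp{(j)} \in N((z\sp{(1)}+z\sp{(2)})/2)$, whose weighted sum of $f(u\sp{(k)}) + \varphi(p\sp{(j)})$ lower-bounds $\frac12[g(x\sp{(1)}) + g(x\sp{(2)})] + \varepsilon$; and each $u\sp{(k)} + p\sp{(j)}$ lies in $N((x\sp{(1)}+x\sp{(2)})/2)$ because the neighborhoods add: $N(w) + N((z\sp{(1)}+z\sp{(2)})/2) \subseteq N((x\sp{(1)}+x\sp{(2)})/2)$ (here one uses $\|x\sp{(1)}-x\sp{(2)}\|_\infty = 2$ together with $x\sp{(i)} = y\sp{(i)}+z\sp{(i)}$). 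Since $f(u\sp{(k)}) + \varphi(p\sp{(j)}) \geq g(u\sp{(k)} + p\sp{(j)})$, the weighted sum bounds $\tilde g((x\sp{(1)}+x\sp{(2)})/2)$ from above, and letting $\varepsilon \to 0$ finishes the argument.

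The main obstacle I anticipate is the claim that the integral neighborhoods add up correctly, i.e. that $u\sp{(k)} + p\sp{(j)} \in N((x\sp{(1)}+x\sp{(2)})/2)$. The subtlety is that $w$ need not be of the form (half-integer point) that keeps $N(w)$ small; a priori $\|y\sp{(1)} - y\sp{(2)}\|_\infty$ could be large, so $N(w)$ could be large, and then the Minkowski sum of neighborhoods need not sit inside the small neighborhood of $(x\sp{(1)}+x\sp{(2)})/2$. To handle this I would first reduce, coordinatewise, to optimal decompositions in which $\|y\sp{(1)}-y\sp{(2)}\|_\infty \leq 2$: for each coordinate $i$, if $|y\sp{(1)}_i - y\sp{(2)}_i| \geq 3$ one can shift a unit of that coordinate between the $y$- and $z$-parts (using convexity of $\varphi_i$ to not increase the objective, exactly as the minimality argument in the proof of Theorem~\ref{THwdicvproj}), bringing the discrepancy down without destroying $x\sp{(i)} = y\sp{(i)} + z\sp{(i)}$. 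Once $\|y\sp{(1)}-y\sp{(2)}\|_\infty \leq 2$ and $\|x\sp{(1)}-x\sp{(2)}\|_\infty = 2$, the neighborhood addition $N(w) + N((z\sp{(1)}+z\sp{(2)})/2) \subseteq N((x\sp{(1)}+x\sp{(2)})/2)$ holds coordinatewise by a short case check, and the rest goes through. This reduction step, and the careful bookkeeping of the combined convex combination, is where the real work lies; everything else is routine.
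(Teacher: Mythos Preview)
Your approach via the distance-two characterization (Theorem~\ref{THfavtarProp33}) has a genuine gap at the neighborhood-inclusion step, and the proposed reduction does not repair it.

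First, the inclusion $N(w) + N(v) \subseteq N(u)$ (with $w=(y^{(1)}+y^{(2)})/2$, $v=(z^{(1)}+z^{(2)})/2$, $u=(x^{(1)}+x^{(2)})/2$) can fail even when $\|y^{(1)}-y^{(2)}\|_\infty \leq 2$. Coordinatewise: if $x^{(1)}_i = x^{(2)}_i$ but $y^{(1)}_i - y^{(2)}_i$ is odd (e.g.\ equal to~$1$, which your bound allows), then $u_i$ is an integer while $w_i$ and $v_i$ are both half-integers; hence $N(w)_i + N(v)_i = \{u_i - 1,\, u_i,\, u_i + 1\}$, which is not contained in $N(u)_i = \{u_i\}$. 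Consequently some of the points $u^{(k)} + p^{(j)}$ in your product combination lie outside $N(u)$, and you cannot conclude $\sum_{k,j} \lambda_k \mu_j\, g(u^{(k)}+p^{(j)}) \geq \tilde g(u)$ from the definition of $\tilde g$.

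Second, the reduction to $\|y^{(1)}-y^{(2)}\|_\infty \leq 2$ does not go through as described. Shifting a unit in coordinate $i$ between the $y$-part and the $z$-part of $x^{(1)}$ (say) changes $f(y^{(1)})$, and convexity of $\varphi_i$ says nothing about that change; near-optimality of the decomposition may be destroyed. The analogous step in the proof of Theorem~\ref{THwdicvproj} works because it invokes the parallelogram inequality (Theorem~\ref{THparallelineqgen-var}) for $f$, which requires $f$ to be discrete midpoint convex; mere integral convexity of $f$ does not provide that inequality. The alternative of moving $z^{(1)}_i, z^{(2)}_i$ toward each other (which convexity of $\varphi_i$ does control) forces $y^{(1)}_i, y^{(2)}_i$ apart, so it does not help either.

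The paper sidesteps these difficulties entirely: instead of the distance-two inequality, it uses the characterization via minimizer sets (Theorem~\ref{THintconvfnsetchar}). For bounded domains one has $\argmin g[-p] = \argmin f[-p] + \argmin \varphi[-p]$, an integrally convex set plus an integer interval, which is integrally convex by Theorem~\ref{THicsetbox}; the unbounded case is then handled by a routine pointwise limit.
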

\begin{proof}
While the details are given in Section~\ref{SCprficsepconvol},
we mention here that the proof consists of two steps.
\begin{enumerate}
\item
If the effective domains of $f$ and $\varphi$ are bounded, 
we can use Theorem~\ref{THintconvfnsetchar}
to reduce the proof of Theorem~\ref{THicsepconvol} to 
Theorem~\ref{THicsetbox} for integrally convex sets.

\item
In the general case with possibly unbounded 
effective domains, 
we consider sequences $\{ f_{k} \}$ and $\{ \varphi_{k} \}$
with bounded effective domains,
which are constructed from $f$ and $\varphi$ 
as their restrictions to finite intervals.
Step~1 shows that $f_{k} \convZ \varphi_{k}$ is integrally convex for each $k$.
Then 
the integral convexity of $f \convZ \varphi$
is established by a limiting argument.
\end{enumerate}
\end{proof}

\begin{remark} \rm \label{RMuseconvoldist}
It follows from Theorem~\ref{THicsepconvol} that 
the $\ell_{1}$-distance $d\sp{(1)}$ and the squared $\ell_{2}$-distance $d\sp{(2)}$
to an integrally convex set $S$
are both integrally convex, where
$d\sp{(1)}$ and $d\sp{(2)}$ are defined respectively as
\begin{align}
d\sp{(1)}(x) &= 
\inf \{   \| x-y \|_{1}  \mid y \in S \}
\qquad (x \in \mathbb{Z}\sp{n}),
\label{distfn1Z}
\\
d\sp{(2)}(x) &= 
\inf \{ {\| x-y \|_{2}}\sp{2}  \mid y \in S \}
\qquad (x \in \mathbb{Z}\sp{n}) .
\label{distfn2Z}
\end{align}
Indeed, the indicator function $\delta_{S}$ of $S$
is integrally convex,  
both
$\varphi\sp{(1)}(x) = \| x \|_{1}$  
and 
$\varphi\sp{(2)}(x) = {\| x \|_{2}}\sp{2}$  
are separable convex, and
$d\sp{(k)} = \delta_{S} \convZ \varphi\sp{(k)}$ for $k=1,2$
by  (\ref{distfn1Z}) and (\ref{distfn2Z}).
Furthermore, with a parameter $a > 0$, we can define
penalty functions 
$g_{a}\sp{(k)}(x) = a \ d\sp{(k)}(x)$  $(k=1,2)$
associated with $S$.
Indeed, $g_{a} = g_{a}\sp{(k)}$ with $k \in \{ 1,2 \}$ satisfies
(i) 
$\dom g_{a} = \mathbb{Z}\sp{n}$,
(ii)
$g_{a}(x) \geq 0$ for all $x \in \mathbb{Z}\sp{n}$,
(iii)
$g_{a}(x)=0 \Leftrightarrow x \in S$,
for each $a>0$,
and (iv)
$\displaystyle
\lim_{a \to +\infty} g_{a}(x) = +\infty$
for all $x \not\in S$.
\finbox
\end{remark}

\begin{remark} \rm \label{RMuseconvolextend}
It also follows from Theorem~\ref{THicsepconvol} 
that any integrally convex function 
$f: \ZZ\sp{n} \to \RR \cup\{ +\infty  \}$,
defined effectively on a subset $S$ of $\ZZ\sp{n}$,
can be extended to another integrally convex function 
that takes finite values on the entire integer lattice $\ZZ\sp{n}$.
To be specific, with a parameter $a > 0$, we define
\begin{align}
g_{a}\sp{(1)}(x) &= \inf \{ f(y) + a \ \| x-y \|_{1}  \mid y \in \dom f \}
\qquad (x \in \mathbb{Z}\sp{n}),
\label{domext1Z}
\\
g_{a}\sp{(2)}(x) &= \inf \{ f(y) + a \  {\| x-y \|_{2}}\sp{2}  \mid y \in \dom f \}
\qquad (x \in \mathbb{Z}\sp{n}) .
\label{domext2Z}
\end{align}
Since 
$\varphi\sp{(1)}(x) = \| x \|_{1}$  
and 
$\varphi\sp{(2)}(x) = {\| x \|_{2}}\sp{2}$  
are separable convex,
both 
$g_{a}\sp{(1)}$ and
$g_{a}\sp{(2)} $ are integrally convex by Theorem~\ref{THicsepconvol}.
Moreover, 
$g_{a} = g_{a}\sp{(k)}$ with $k \in \{ 1,2 \}$ satisfies
(i) 
$\dom g_{a} = \ZZ\sp{n}$,
(ii) 
for each $x \in \dom f$, 
there exists $\alpha(x) >0$ such that
$g_{a}(x) = f(x)$
for all $a \geq \alpha(x)$,
and
(iii)
if $\dom f$ is bounded, 
there exists $\hat \alpha >0$ such that
$g_{a}(x)  = f(x)$ for all $x \in \dom f$ and $a \geq \hat \alpha$.
\finbox
\end{remark}

\begin{remark} \rm \label{RMconvolMinkow}
Consider the indicator functions $\delta_S, \delta_B: \ZZ^n \to \{0, + \infty\}$ 
of $S, B \subseteq  \ZZ^n$.
Since their convolution $\delta_S \convZ \delta_B$ 
coincides with the indicator function $\delta_{S+B}$ of 
the Minkowski sum $S+B$ by (\ref{setconvolMinkow}),
Theorem~\ref{THicsetbox} is a special case of Theorem~\ref{THicsepconvol}.
It is emphasized, however, that our proof of
Theorem~\ref{THicsepconvol} uses Theorem~\ref{THicsetbox}.
\finbox
\end{remark}

\begin{remark} \rm \label{RMprojFromConvol}
The projection operation can be regarded as a special case of
the convolution operation.
Let
$g(x) = \inf \{f(x,y)\mid y\in \mathbb{Z}\sp{m}\}$
be the projection of 
$f: \mathbb{Z}\sp{n+m} \to \mathbb{R} \cup \{ +\infty  \}$.
Consider $B = \{ (x,y) \in \ZZ\sp{n+m} \mid x = \veczero \}$,
which is an integer interval 
$[ (\veczero,-\infty),(\veczero,+\infty) ]_{\ZZ}$ in $\ZZ\sp{n+m}$.
Then the projection $g$ coincides with the convolution $f \convZ \delta_{B}$
in the sense that
$g(x) = (f \convZ \delta_{B})(x, \veczero)$
for $x \in \ZZ\sp{n}$,  
which we denote as
$g = (f \convZ  \delta_{B}) |_{\ZZ\sp{n}}$.
If $f$ is integrally convex,
$f \convZ  \delta_{B}: \mathbb{Z}\sp{n+m} \to \mathbb{R} \cup \{ +\infty  \}$
is also integrally convex by Theorem~\ref{THicsepconvol}, 
and hence 
$(f \convZ  \delta_{B}) |_{\ZZ\sp{n}}=g$
is also integrally convex.
This argument serves as an alternative proof of Theorem~\ref{THicvproj}.
\finbox
\end{remark}

\subsection{Results for discrete midpoint convex functions}
\label{SCmidptconvolresult}

The convolution operation is not amenable to 
discrete midpoint convexity.
In this section we demonstrate this in terms of examples.

We first note the following example
(\cite[Example 4.11]{MS01rel}, \cite[Note 5.11]{Mdcasiam})
about the Minkowski sum of L$^{\natural}$-convex sets.

\begin{example} \rm \label{EXlnatsetsum}
The Minkowski sum of
$S_{1} =  \{(0, 0, 0), (1, 1, 0)\}$ and $S_{2} =  \{(0, 0, 0),$ $(0, 1, 1)\}$
is equal to 
$S_{1} + S_{2} = \{(0, 0, 0), (0, 1, 1), (1, 1, 0), (1, 2, 1)\}$.
For $x=(0, 1, 1)$ and $y=(1, 1, 0)$ in $S_{1} + S_{2}$,
we have
$\left\lceil (x+y)/2 \right\rceil = (1, 1, 1) \not\in S_{1} + S_{2}$
and
$\left\lfloor (x+y)/2 \right\rfloor = (0, 1, 0) \not\in S_{1} + S_{2}$.
Both $S_{1}$ and $S_{2}$
are L$^{\natural}$-convex,
but $S_{1}+S_{2}$ is not L$^{\natural}$-convex.
\finbox
\end{example}

For the Minkowski sum we observe the following.

\begin{itemize}
\item
The Minkowski sum of two L$^{\natural}$-convex sets
is not necessarily L$^{\natural}$-convex,
though it is integrally convex.
This is shown by Example~\ref{EXlnatsetsum} above
and \cite[Theorem 8.42]{Mdcasiam}.

\item
The Minkowski sum of two discrete midpoint convex sets
is not necessarily discrete midpoint convex (nor integrally convex).
This is shown by Example~\ref{EXicdim2sumhole}.

\item
The Minkowski sum $S+B$ of a discrete midpoint convex set $S$
and an integer interval $B$ is not necessarily discrete midpoint convex,
though it is integrally convex.
This is shown by Example~\ref{EXdicdim3set} below
and Theorem~\ref{THicsetbox}.
\end{itemize}

\begin{example}  \rm \label{EXdicdim3set}
The Minkowski sum of
$S = \{ (0,0,1), (1,1,0) \}$
and
$B = \{ (0,0,0), (1,0,0) \}$
is equal to 
$S+B = \{ (0,0,1), (1,1,0),  (1,0,1), (2,1,0) \}$.
$S$ is discrete midpoint convex
and $B$ is an integer interval.
For
$x = (0,0,1) \in S+B$ and $y = (2,1,0) \in S+B$ we have
\[
\| x - y \|_{\infty} = 2,
\quad
\left\lceil \frac{x+y}{2} \right\rceil 
= (1,1,1) \not\in S+B, 
\quad
\left\lfloor \frac{x+y}{2} \right\rfloor
= (1,0,0) \not\in S+B.
\]
Therefore, $S+B$ is not discrete midpoint convex.
\finbox
\end{example}

Next we turn to discrete midpoint convex functions.
Recall the relation (\ref{setconvolMinkow})
between the Minkowski sum of sets and 
the convolution of their indicator functions.
Our observations above about the Minkowski sums imply the following.

\begin{itemize}
\item
The convolution $f_{1} \convZ f_{2}$ of globally (resp., locally) discrete midpoint convex functions 
$f_{1}, f_{2}$ is not necessarily globally (resp., locally) discrete midpoint convex
 (nor integrally convex).

\item
The convolution $f \convZ \varphi$ of a globally discrete midpoint convex function $f$ 
and a separable convex function $\varphi$ is 
not necessarily globally discrete midpoint convex,
though it is integrally convex by (\ref{fnclasses3}) and Theorem~\ref{THicsepconvol}.

\item
The convolution $f \convZ \varphi$ of a locally discrete midpoint convex function $f$ 
and a separable convex function $\varphi$ is 
not necessarily locally discrete midpoint convex,
though it is integrally convex by (\ref{fnclasses3}) and Theorem~\ref{THicsepconvol}.
\end{itemize}

We show another example of $f$ for the above statements
such that the effective domain of $f$ is an integer interval.

\begin{example}  \rm \label{EXdicdim3fn}
Let $S = \{ (0,0,1), (1,1,0) \}$,
$B = \{ (0,0,0), (1,0,0) \}$, and
$\varphi= \delta_B$, and define
$f: \ZZ^3 \to \mathbb{R} \cup \{ +\infty\}$
with  
$\dom f = [ (0,0,0), (1,1,1) ]_{\ZZ}$
by
\[
f(x)  =
   \left\{  \begin{array}{ll}
    0            &   (x \in S) ,     \\
    1       &   (x \in [ (0,0,0), (1,1,1) ]_{\ZZ} \setminus S) . \\
                      \end{array}  \right.
\]
The convolution $f \convZ \varphi$,
with
$\dom (f \convZ \varphi) = [ (0,0,0), (2,1,1) ]_{\ZZ}$, is given by 
\[
(f \convZ \varphi)(x)  =
   \left\{  \begin{array}{ll}
    0            &   (x \in S+B) ,     \\
    1       &   (x \in [  (0,0,0), (2,1,1) ]_{\ZZ} \setminus (S+B)) . \\
                      \end{array}  \right.
\]
For $x = (0,0,1)$, $y = (2,1,0)$ we have 
$\| x - y \|_{\infty} = 2$,
\[
(f \convZ \varphi)(x) = (f \convZ \varphi)(y) =0,
\quad
(f \convZ \varphi) \left(
\left\lceil \frac{x+y}{2} \right\rceil 
\right)
= 
(f \convZ \varphi) \left(
\left\lfloor \frac{x+y}{2} \right\rfloor
\right) = 1 .
\]
Hence 
 $f \convZ \varphi$ is not (globally or locally) discrete midpoint convex.
\finbox
\end{example}

By featuring discrete midpoint convexity (\ref{lnatfmidconv})
we can recast our knowledge as follows.

\begin{itemize}
\item
Discrete midpoint convexity 
for all $(x, y)$ with
$\| x - y \|_{\infty} \geq 2$
is not preserved in the transformation $f \mapsto f \convZ \varphi$.

\item
Discrete midpoint convexity 
for all $(x, y)$ with
 $\| x - y \|_{\infty} = 2$
is not preserved in the transformation $f \mapsto f \convZ \varphi$.

\item
Discrete midpoint convexity 
for all $(x, y)$ with
$\| x - y \|_{\infty} \geq 1$
is preserved in the transformation $f \mapsto f \convZ \varphi$.
\end{itemize}
The last statement is a reformulation of
the following (known) fact \cite[Theorem 7.11]{Mdcasiam}.

\begin{theorem} \label{THlsepconvol}
The convolution $f \convZ \varphi$ of an L$^{\natural}$-convex function $f$
and a separable convex function $\varphi$ is an L$^{\natural}$-convex function.
\finbox
\end{theorem}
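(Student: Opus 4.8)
The plan is to deduce L$^{\natural}$-convexity of $h := f \convZ \varphi$ from the characterization, recalled in Section~\ref{SCdiscmidptconv}, that a function is L$^{\natural}$-convex if and only if it is submodular and integrally convex. Integral convexity of $h$ is already supplied by Theorem~\ref{THicsepconvol}, so the entire task reduces to checking that $h$ is submodular, i.e.\ $h(x) + h(x') \ge h(x \vee x') + h(x \wedge x')$ for all $x, x' \in \ZZ^{n}$.

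To prove submodularity I would assume $h(x), h(x') < +\infty$ (otherwise the inequality is trivial), fix $\varepsilon > 0$, and pick near-optimal decompositions $x = y + z$ and $x' = y' + z'$ with $f(y) + \varphi(z) \le h(x) + \varepsilon$ and $f(y') + \varphi(z') \le h(x') + \varepsilon$; in particular $y, y' \in \dom f$ and $z, z' \in \dom\varphi$. Then set
\[
 v = (x \vee x') - (y \vee y'), \qquad v' = (x \wedge x') - (y \wedge y') ,
\]
so that $(y \vee y') + v = x \vee x'$, $(y \wedge y') + v' = x \wedge x'$, and $v + v' = (x+x') - (y+y') = z + z'$. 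Submodularity of $f$ (valid since $f$ is L$^{\natural}$-convex) gives $f(y) + f(y') \ge f(y \vee y') + f(y \wedge y')$, with $y \vee y', y \wedge y' \in \dom f$.

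The crux is the companion inequality $\varphi(z) + \varphi(z') \ge \varphi(v) + \varphi(v')$. Since $\varphi$ is separable it suffices to show $\varphi_{i}(z_{i}) + \varphi_{i}(z'_{i}) \ge \varphi_{i}(v_{i}) + \varphi_{i}(v'_{i})$ coordinatewise; as $z_{i} + z'_{i} = v_{i} + v'_{i}$, one-dimensional discrete convexity of $\varphi_{i}$ reduces this to verifying that both $v_{i}$ and $v'_{i}$ lie in the integer interval $[\min(z_{i},z'_{i}), \max(z_{i},z'_{i})]$. This is a short case distinction on the signs of $y_{i} - y'_{i}$ and $x_{i} - x'_{i}$: when the two maxima and the two minima are attained consistently one gets $\{v_{i}, v'_{i}\} = \{z_{i}, z'_{i}\}$, and in the crossed case a direct computation gives $z_{i} \le v_{i}, v'_{i} \le z'_{i}$ (after swapping if needed). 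Since $\dom\varphi$ is a box this also yields $v, v' \in \dom\varphi$. Combining everything, $h(x \vee x') + h(x \wedge x') \le f(y \vee y') + f(y \wedge y') + \varphi(v) + \varphi(v') \le f(y) + f(y') + \varphi(z) + \varphi(z') \le h(x) + h(x') + 2\varepsilon$, and letting $\varepsilon \downarrow 0$ proves submodularity; together with Theorem~\ref{THicsepconvol} this establishes the theorem.

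I expect the only genuine obstacle to be the bookkeeping in the coordinatewise interval-containment step; the rest is routine. Two alternative routes are equally viable and could be mentioned: one may verify the discrete midpoint convexity inequality \eqref{lnatfmidconv} for $h$ directly, decomposing the rounded midpoints as $\lceil (x+x')/2 \rceil = \lceil (y+y')/2 \rceil + v$ and $\lfloor (x+x')/2 \rfloor = \lfloor (y+y')/2 \rfloor + v'$ and reducing the separable part to the same one-dimensional fact; or one may lift $f$ to the L-convex function $\hat{f}(x_{0}, x) = f(x - x_{0}\bm{1})$ on $\ZZ^{n+1}$, note that $(x_{0},x) \mapsto \varphi(x) + \delta_{\{0\}}(x_{0})$ is separable convex there, check that $\hat{f} \convZ (\varphi + \delta_{\{0\}}) = h(x - x_{0}\bm{1})$, and invoke the L-convex analogue of the statement.
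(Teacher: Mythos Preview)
Your argument is correct. The coordinatewise containment $v_{i},v'_{i}\in[\min(z_{i},z'_{i}),\max(z_{i},z'_{i})]$ does go through exactly as you sketch, and together with Theorem~\ref{THicsepconvol} this yields L$^{\natural}$-convexity via the ``submodular $+$ integrally convex'' characterization.

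Your main route, however, is not the one the paper takes. The paper (Section~\ref{SClnatsepconvol}, Proposition~\ref{PRmidptsepconvol}) proves discrete midpoint convexity~\eqref{lnatfmidconv} for $h$ directly: starting from near-optimal decompositions $x\sp{(i)}=y\sp{(i)}+z\sp{(i)}$, it sets $z'=\lceil(x\sp{(1)}+x\sp{(2)})/2\rceil-\lceil(y\sp{(1)}+y\sp{(2)})/2\rceil$ and $z''=\lfloor(x\sp{(1)}+x\sp{(2)})/2\rfloor-\lfloor(y\sp{(1)}+y\sp{(2)})/2\rfloor$, applies~\eqref{lnatfmidconv} for $f$, and then verifies the same one-dimensional interval containment you use to handle $\varphi$. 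This is precisely the first of your ``alternative routes.'' The virtue of the paper's choice is that it is self-contained: it does not invoke Theorem~\ref{THicsepconvol}, whose proof passes through the nontrivial Minkowski-sum result Theorem~\ref{THicsetbox}. Your route, by contrast, offloads half the work onto Theorem~\ref{THicsepconvol} and then only has to check submodularity, which is arguably the cleaner of the two inequalities (lattice operations instead of rounded midpoints). Your third alternative, lifting to an L-convex function on $\ZZ\sp{n+1}$, is exactly the original proof from \cite{Mdcasiam} that the paper wishes to bypass.
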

In Section~\ref{SClnatsepconvol} we give a direct proof of this theorem;
the proof in \cite{Mdcasiam} uses a reduction to an L-convex function,
which is defined 
in terms of submodularity and linearity in the direction of $(1,1,\ldots,1)$.

\subsection{Proof of Theorem~\ref{THicsetbox}}
\label{SCprficsetbox}

We prove Theorem~\ref{THicsetbox}
that the Minkowski sum $S+B$ of an integrally convex set $S$ 
and an integer interval $B$ is integrally convex.

Let $S \subseteq  \mathbb{Z}\sp{n}$ be an integrally convex set
and $B = [a,b]_{\ZZ} \subseteq  \mathbb{Z}\sp{n}$
with
$a \in (\ZZ \cup \{ -\infty \})\sp{n}$
and $b \in (\ZZ \cup \{ +\infty \})\sp{n}$.
For $i=1,2,\ldots,n$
we denote the $i$th unit vector by $\unitvec{i} \in \mathbb{Z}\sp{n}$ and 
put $B_{i}= \{ t \unitvec{i} \mid a_{i} \leq t \leq b_{i}, t \in \ZZ \}$.
For $i=1$, for example, 
$B_{1}= \{ (t,0,\ldots,0)  \mid a_{1} \leq t \leq b_{1}, t \in \ZZ \}$.
Then $B = B_{1} + B_{2} + \cdots + B_{n}$,
and hence
$S+B = (\cdots ((S+ B_{1}) + B_{2}) + \cdots ) + B_{n}$.
Thus the proof of Theorem~\ref{THicsetbox} is reduced to showing the following lemma.

\begin{lemma} \label{LMicsetline}
The Minkowski sum $S+B$ of an integrally convex set $S$ and 
$B= \{ (t,0,\ldots,0)  \mid \hat a \leq t \leq \hat b, t \in \ZZ \}$
is integrally convex, 
where $\hat a \in \ZZ \cup \{ -\infty \}$ and $\hat b \in \ZZ \cup \{ +\infty \}$.
\end{lemma}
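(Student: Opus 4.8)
The plan is to prove Lemma~\ref{LMicsetline} by verifying the definition of integral convexity for $T := S + B$ directly, namely that for every $x \in \RR^{n}$ with $x \in \overline{T}$ we have $x \in \overline{T \cap N(x)}$. We may assume $B$ is a genuine (possibly infinite) segment in the first coordinate direction. First I would take a point $x \in \overline{T}$. By Lemma~\ref{LMprojconvhull}-type reasoning (or directly, since $\overline{T} = \overline{S} + \overline{B}$ where $\overline{B}$ is the real segment $[\hat a,\hat b]$ in direction $\unitvecfirst^{1}$), we can write $x = s + t\,\unitvecfirst^{1}$ with $s \in \overline{S}$ and $\hat a \le t \le \hat b$. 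Using integral convexity of $S$, express $s \in \overline{S \cap N(s)}$ as a convex combination $s = \sum_{k} \lambda_{k} u^{(k)}$ with $u^{(k)} \in S \cap N(s)$. The goal is to produce a convex representation of $x$ using only points of $T$ that lie in $N(x)$.

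The key idea is to handle the first coordinate separately from the others. Split $t = \lfloor t \rfloor + \theta$ with $\theta \in [0,1)$, and note that each $u^{(k)}$, being in $N(s)$, already has its non-first coordinates in $N(x)$ (the non-first coordinates of $x$ and $s$ agree). For the first coordinate, $x_{1} = s_{1} + t$, and $N(x)$ in the first coordinate is $\{\lfloor x_{1}\rfloor, \lceil x_{1}\rceil\}$. The plan is to shift each $u^{(k)}$ by an appropriate integer multiple of $\unitvecfirst^{1}$ lying in $[\hat a,\hat b]_{\ZZ}$ so that the result lands in $T \cap N(x)$, and then recombine. Concretely, for each $k$ one chooses the shift to be either $\lfloor t \rfloor$ or $\lceil t \rceil$ (both of which lie in $[\hat a,\hat b]_{\ZZ}$ when $\hat a \le t \le \hat b$, as long as $t$ is not itself an endpoint forcing only one choice — a case easily dealt with since then $\theta = 0$), landing $u^{(k)} + \lfloor t\rfloor\,\unitvecfirst^{1}$ and $u^{(k)} + \lceil t\rceil\,\unitvecfirst^{1}$ in $S + B = T$. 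One then checks these shifted points lie in $N(x)$: their non-first coordinates are unchanged and lie in $N(x)$; their first coordinates are $u^{(k)}_{1} + \lfloor t\rfloor$ and $u^{(k)}_{1} + \lceil t\rceil$, and since $u^{(k)}_{1} \in \{\lfloor s_{1}\rfloor, \lceil s_{1}\rceil\}$ one verifies these are among $\{\lfloor x_{1}\rfloor, \lceil x_{1}\rceil\}$, possibly after noting $x_{1} = s_{1}+t$ and doing a short case analysis on whether $s_{1}$ and $t$ are integral.

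The recombination step is then: write $x = \sum_{k}\lambda_{k}\bigl( u^{(k)} + t\,\unitvecfirst^{1}\bigr)$, and for each $k$ further decompose $u^{(k)} + t\,\unitvecfirst^{1} = (1-\theta)\bigl(u^{(k)} + \lfloor t\rfloor\unitvecfirst^{1}\bigr) + \theta\bigl(u^{(k)} + \lceil t\rceil\unitvecfirst^{1}\bigr)$ when $\theta > 0$ (and trivially when $\theta = 0$). Substituting gives $x$ as a convex combination of the points $u^{(k)} + \lfloor t\rfloor\unitvecfirst^{1}$ and $u^{(k)} + \lceil t\rceil\unitvecfirst^{1}$, all of which have been shown to lie in $T \cap N(x)$. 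Hence $x \in \overline{T \cap N(x)}$, which is exactly integral convexity of $T = S+B$.

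The main obstacle I anticipate is the bookkeeping in the case analysis showing the shifted points $u^{(k)} + \lfloor t\rfloor\unitvecfirst^{1}$ and $u^{(k)} + \lceil t\rceil\unitvecfirst^{1}$ genuinely lie in $N(x)$ in the first coordinate: one must track the four sub-cases coming from whether $s_{1} \in \ZZ$ and whether $t \in \ZZ$ (equivalently $\theta = 0$), and confirm that $u^{(k)}_{1} + \lfloor t\rfloor$ and $u^{(k)}_{1} + \lceil t\rceil$ never escape $\{\lfloor x_{1}\rfloor, \lceil x_{1}\rceil\}$, while also ensuring the chosen integer shifts remain inside $[\hat a,\hat b]_{\ZZ}$ (the latter is automatic from $\hat a \le t \le \hat b$ and integrality of $\hat a, \hat b$, with the only delicate point being when $t$ equals a finite endpoint, forcing $\theta = 0$ and collapsing the two choices into one). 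Once this coordinate-wise check is in hand, the convex-combination manipulation is routine.
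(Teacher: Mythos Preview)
Your proposal has a genuine gap at the step where you claim that both shifted points $u^{(k)} + \lfloor t\rfloor\,\unitvec{1}$ and $u^{(k)} + \lceil t\rceil\,\unitvec{1}$ lie in $N(x)$ for every $k$. This is false in general. Here is a small counterexample: let $n=2$, $S=\{(0,0),(1,1)\}$, $B=\{(0,0),(1,0)\}$, and $x=(1,\tfrac12)\in\overline{S+B}$. The only way to write $x=s+t\,\unitvec{1}$ with $s\in\overline{S}$ is $s=(\tfrac12,\tfrac12)$, $t=\tfrac12$; then $u^{(1)}=(0,0)$, $u^{(2)}=(1,1)$, $\lfloor t\rfloor=0$, $\lceil t\rceil=1$. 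Your shifted points are $(0,0),(1,0),(1,1),(2,1)$, but $N(x)=\{1\}\times\{0,1\}$, so $(0,0)$ and $(2,1)$ lie outside $N(x)$. The trouble is that when neither $s_{1}$ nor $t$ is an integer, the set $\{u^{(k)}_{1}+\lfloor t\rfloor,\,u^{(k)}_{1}+\lceil t\rceil:k\}$ can span three consecutive integers, while $N(x)$ in the first coordinate contains at most two.

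The paper's proof avoids this by \emph{not} using the uniform split $(1-\theta)\cdot\lfloor t\rfloor+\theta\cdot\lceil t\rceil$ on every $u^{(k)}$. Instead it first sorts the $u^{(k)}$ by their first coordinate (those with $u^{(k)}_{1}=\lfloor s_{1}\rfloor$ versus $u^{(k)}_{1}=\lceil s_{1}\rceil$) and then assigns the larger shift $\lceil t\rceil$ only to those with the lower first coordinate (and vice versa in a symmetric case), so that every shifted point stays in $N(x)$. One then has to check that the total weight on the points receiving the larger shift can be made exactly $\theta$, which is where the index $k_{1}$ and the splitting coefficient $\alpha$ in the paper's argument come from. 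Your uniform recombination cannot be repaired by a mere case analysis on whether $s_{1}$ or $t$ is integral; the assignment of shifts must genuinely depend on $k$.
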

\begin{proof}
Let $x \in \overline{S+B}$.
Our goal is to show $x \in \overline{(S+B) \cap N(x)}$.
The proof goes as follows.

Since 
$\overline{S+B}= \overline{S} + \overline{B}$
(see, e.g.,  \cite[Proposition 3.17]{Mdcasiam})
we can represent $x$ as $x = y+z$
with $y \in \overline{S}$ and $z \in \overline{B}$.
Since $S$ is integrally convex, 
the vector $y$ can be represented as
$ y = \sum_{k=1}\sp{l} \lambda_{k} y\sp{(k)}$
with some $y\sp{(k)} \in S \cap N(y)$ 
and $\lambda_{k} > 0$
$(k=1,2,\ldots,l)$,
where
$\sum_{k=1}\sp{l} \lambda_{k}  = 1$.
The other vector $z$ can be represented as
$z = (\zeta + \beta,0,\ldots,0)$
with some $\zeta \in \ZZ$ and $\beta \in \RR$, where $0 \leq \beta < 1$.
We show $x \in \overline{(S+B) \cap N(x)}$ by finding vectors
$v_i\in(S+B)\cap N(x)$ and coefficients $\mu_i$ of convex combination
$(\sum \mu_i =1,\mu_i\geq 0)$
such that 
\[
x  =  
   y + z  
  =  
  \sum_{k=1}\sp{l} \lambda_{k} y\sp{(k)} + (\zeta + \beta)e
  =  \sum \mu_i v_i.
\]
By $x = y+z$ we have
$x_{1} - y_{1} =z_{1} = \zeta + \beta$,
which implies
$0 \leq x_{1} - y_{1} -  \zeta < 1$. 
We divide into two cases:
Case 1: 
$\lfloor x_{1} \rfloor -  \zeta  \leq y_{1}$,
and
Case 2: 
$ \lfloor x_{1} \rfloor -  \zeta > y_{1}$.

\medskip

{\bf Case 1 ($\lfloor x_{1} \rfloor -  \zeta  \leq y_{1}$)}. 
We have
$\lfloor x_{1} \rfloor -  \zeta  \leq y_{1} \leq x_{1}-  \zeta  \leq  \lceil x_{1} \rceil -  \zeta$,
and hence
$N(x) - \zeta \unitvecfirst \supseteq N(y)$,
where $\unitvecfirst = \unitvec{1} = (1,0,\ldots,0)$.
Since
$y\sp{(k)} \in  N(y) \subseteq N(x) - \zeta \unitvecfirst$,
we have
$y\sp{(k)}_{1} = \lfloor x_{1} \rfloor -  \zeta$ or $\lceil x_{1} \rceil -  \zeta$
$(k=1,2,\ldots,l)$.
After renumbering, if necessary, we may assume
\begin{align}
y\sp{(k)}_{1} &= \lfloor x_{1} \rfloor -  \zeta
\quad 
(k=1,2,\ldots,k_{0}),
\label{Case1ykd}
\\
y\sp{(k)}_{1} &= \lceil x_{1} \rceil -  \zeta
\quad 
(k=k_{0}+1,k_{0}+2,\ldots,l ),
\label{Case1yku}
\end{align}
where $k_{0}=0$ if $x_{1} \in \ZZ$.
Then
\begin{align*}
 y_{1} &=  
 \sum_{k=1}\sp{k_{0}} \lambda_{k} y\sp{(k)}_{1}
  +  \sum_{k=k_{0}+1}\sp{l} \lambda_{k} y\sp{(k)}_{1}
= 
 \sum_{k=1}\sp{k_{0}} \lambda_{k} \lfloor x_{1} \rfloor 
 +  \sum_{k=k_{0}+1}\sp{l} \lambda_{k} \lceil x_{1} \rceil -  \zeta .
\end{align*}
With the use of this expression we obtain
\begin{align}
 \beta &= x_{1} - \zeta - y_{1}
\notag \\ &= 
x_{1} - \sum_{k=1}\sp{k_{0}} \lambda_{k} \lfloor x_{1} \rfloor 
 -  \sum_{k=k_{0}+1}\sp{l} \lambda_{k} \lceil x_{1} \rceil 
\notag \\ &= 
x_{1} - \sum_{k=1}\sp{k_{0}} \lambda_{k} (\lceil x_{1} \rceil -1 ) 
 -  \sum_{k=k_{0}+1}\sp{l} \lambda_{k} \lceil x_{1} \rceil 
\quad \mbox{(since $k_{0}=0$ if $x_{1} \in \ZZ$)}
\notag \\ &= 
x_{1} - \lceil x_{1} \rceil + \sum_{k=1}\sp{k_{0}} \lambda_{k}
\notag \\ &\leq
 \sum_{k=1}\sp{k_{0}} \lambda_{k}.
\label{Case1beta}
\end{align}
Let $k_{1}$ be the minimum $k'$ satisfying
$\beta \leq  \sum_{k=1}\sp{k'} \lambda_{k}$.
We have $k_{1} \leq k_{0}$ by (\ref{Case1beta}).
It should be clear that $k_{1}=0$ if $\beta =0$.
Define
$\alpha = \beta -  \sum_{k=1}\sp{k_{1}-1} \lambda_{k}$,
where  $\alpha = 0$ if $\beta =0$.
We have $0 \leq \alpha \leq \lambda_{k_{1}}$.
Using $\beta = \alpha + \sum_{k=1}\sp{k_{1}-1} \lambda_{k}$
and 
$\sum_{k=1}\sp{l} \lambda_{k}  = 1$
we obtain
\begin{align}
 x &= y + z
\notag \\ &= 
 \sum_{k=1}\sp{l} \lambda_{k} y\sp{(k)}
 + (\zeta + \beta) \unitvecfirst
\notag \\ &= 
 \sum_{k=1}\sp{k_{1}-1}  \lambda_{k} y\sp{(k)} 
+ \lambda_{k_1}y\sp{(k_{1})} + \sum_{k=k_{1}+1}\sp{l} \lambda_{k} y\sp{(k)} 
+ (\zeta +\alpha + \sum_{k=1}\sp{k_{1}-1} \lambda_{k}) \unitvecfirst
\notag \\ &= 
 \sum_{k=1}\sp{k_{1}-1}  \lambda_{k} (y\sp{(k)} + (\zeta +1) \unitvecfirst) 
 + \alpha  (y\sp{(k_{1})} + (\zeta +1) \unitvecfirst) 
\notag \\ & \phantom{=} \ {}
 + (\lambda_{k_{1}}-\alpha ) (y\sp{(k_{1})} + \zeta  \unitvecfirst) 
 + \sum_{k=k_{1}+1}\sp{l} \lambda_{k} (y\sp{(k)} + \zeta  \unitvecfirst).
\label{Case1xyz}
\end{align}
Since $0 \leq \alpha \leq \lambda_{k_{1}}$
and 
$\displaystyle
 \sum_{k=1}\sp{k_{1}-1}  \lambda_{k}  + \alpha + (\lambda_{k_{1}}-\alpha ) 
 + \sum_{k=k_{1}+1}\sp{l} \lambda_{k} 
 =1$,
(\ref{Case1xyz}) represents $x$ as a convex combination of 
\begin{align}
& y\sp{(k)} + (\zeta +1) \unitvecfirst \ \ (k=1,\ldots, k_{1}-1),
\quad
y\sp{(k_{1})} + (\zeta +1) \unitvecfirst,
\label{Case1MinkowVector1}
\\ &
y\sp{(k_{1})} + \zeta  \unitvecfirst,
\label{Case1MinkowVector2}
 \\ &
y\sp{(k)} + \zeta  \unitvecfirst \ \ (k=k_{1}+1,\ldots, l) ,
\label{Case1MinkowVector3}
\end{align}
where the vectors in (\ref{Case1MinkowVector1}) and (\ref{Case1MinkowVector2}) 
are missing when $\beta=0$ (which implies $k_{1}=0$).
Then $x \in \overline{(S+B) \cap N(x)}$ follows from Claim 1 below.

Claim 1:
All vectors in (\ref{Case1MinkowVector1})--(\ref{Case1MinkowVector3}) 
belong to $(S+B) \cap N(x)$.
\\
(Proof)
Since $y\sp{(k)} \in S$ and
$\zeta \unitvecfirst \in B$,
the vectors in (\ref{Case1MinkowVector2}) and (\ref{Case1MinkowVector3}) 
belong to $S+B$.
The vectors in (\ref{Case1MinkowVector1}) exist only when $\beta  > 0$,
in which case $(\zeta +1) \unitvecfirst \in B$
and hence
the vectors in (\ref{Case1MinkowVector1}) belong to $S+B$.
By $N(x) \supseteq  N(y) + \zeta \unitvecfirst$,
the vectors in
 (\ref{Case1MinkowVector2}) 
and  (\ref{Case1MinkowVector3}) obviously belong to $N(x)$.
The vectors in (\ref{Case1MinkowVector1}) also belong to $N(x)$
since $x=y+(\zeta + \beta, 0,\ldots,0)$ and 
\[
y_1\sp{(k)} +\zeta +1 = \lfloor x_1 \rfloor +1,\quad
y_i\sp{(k)} =x_i \quad(i\neq 1)
\]
for $k=1,\ldots, k_0$
by (\ref{Case1ykd}) and $k_{1} \leq k_{0}$.
Thus Claim 1 is proved.

\medskip

{\bf Case 2 ($y_{1} < \lfloor x_{1} \rfloor -  \zeta$)}.
We have
$\lfloor x_{1} \rfloor -  \zeta -1 \leq x_{1} -\zeta - 1 < y_{1} < \lfloor x_{1} \rfloor -  \zeta$
and hence
$N(x) - (\zeta +1) \unitvecfirst \supseteq   N(y)$.
Since
$y\sp{(k)} \in  N(y) \subseteq   N(x) - (\zeta +1) \unitvecfirst$,
we have 
$y\sp{(k)}_{1} = \lfloor x_{1} \rfloor -  (\zeta +1)$ 
or $\lceil x_{1} \rceil - (\zeta +1)$
$(k=1,2,\ldots,l)$.
This implies $x_{1} \not\in \ZZ$,
since otherwise (i.e, if $x_{1} \in \ZZ$), 
$y\sp{(k)}_{1} = x_{1}- (\zeta +1)$
for all $k$, and 
$y_{1} =  \sum_{k=1}\sp{l} \lambda_{k} y\sp{(k)}_{1}  
= x_{1}- (\zeta +1)$,
contradicting $\beta = x_{1} - y_{1} - \zeta  <1 $.
After renumbering, if necessary, we may assume
\begin{align}
y\sp{(k)}_{1} &= \lfloor x_{1} \rfloor -  (\zeta +1)
\quad
(k=1,2,\ldots,k_{0}),
\label{Case2ykd}
\\
y\sp{(k)}_{1} &= \lceil x_{1} \rceil -  (\zeta +1)
\quad 
(k=k_{0}+1,k_{0}+2,\ldots,l),
\label{Case2yku}
\end{align}
where $1 \leq k_{0} \leq l-1$.
We have $\beta > 0$, since
 $\beta = x_{1} - y_{1} - \zeta  >  x_{1} - \lfloor x_{1} \rfloor \geq 0$.

It follows from (\ref{Case2ykd}) and (\ref{Case2yku}) that
\begin{align*}
 y_{1} &=  
\sum_{k=1}\sp{k_{0}} \lambda_{k} y\sp{(k)}_{1}
 +  \sum_{k=k_{0}+1}\sp{l} \lambda_{k} y\sp{(k)}_{1}
= 
 \sum_{k=1}\sp{k_{0}} \lambda_{k} \lfloor x_{1} \rfloor 
 +  \sum_{k=k_{0}+1}\sp{l} \lambda_{k} \lceil x_{1} \rceil -  (\zeta +1) .
\end{align*}
With the use of this expression we obtain
\begin{align}
 \beta &= x_{1} - \zeta  - y_{1}
\notag \\ &= 
x_{1} +1 
 - \sum_{k=1}\sp{k_{0}} \lambda_{k} \lfloor x_{1} \rfloor 
 -  \sum_{k=k_{0}+1}\sp{l} \lambda_{k} \lceil x_{1} \rceil   
\notag \\ &= 
x_{1} + 1 - \sum_{k=1}\sp{k_{0}} \lambda_{k} (\lceil x_{1} \rceil -1 ) 
 -  \sum_{k=k_{0}+1}\sp{l} \lambda_{k} \lceil x_{1} \rceil 
\qquad \mbox{(since $x_{1} \not\in \ZZ$)}
\notag \\ &= 
x_{1} +1 - \lceil x_{1} \rceil + \sum_{k=1}\sp{k_{0}} \lambda_{k}
\notag \\ &\geq
 \sum_{k=1}\sp{k_{0}} \lambda_{k}.
\label{Case2beta}
\end{align}
Let $k_{1}$ be the maximum $k'$ satisfying
$\sum_{k=1}\sp{k'} \lambda_{k} \leq \beta$.
We have $k_{1} \leq l -1$,
since 
$\sum_{k=1}\sp{l} \lambda_{k} =1$
and $\beta < 1$,
and $k_{1} \geq k_{0}$ by (\ref{Case2beta}).
Therefore,
$1 \leq k_{1} \leq l -1$.
Define
$\alpha = \beta - \sum_{k=1}\sp{k_{1}} \lambda_{k} $,
which satisfies
$0 \leq \alpha \leq \lambda_{k_{1}+1}$. 
Using 
$\beta = \alpha +  \sum_{k=1}\sp{k_{1}} \lambda_{k} $
we obtain
\begin{align}
 x &= y + z
\notag \\ &= 
 \sum_{k=1}\sp{l} \lambda_{k} y\sp{(k)}
 + (\zeta + \beta) \unitvecfirst
\notag \\ &= 
 \sum_{k=1}\sp{k_{1}}  \lambda_{k} (y\sp{(k)} + (\zeta +1) \unitvecfirst) 
 + \alpha  (y\sp{(k_{1}+1)} + (\zeta +1) \unitvecfirst) 
\notag \\ & \phantom{=} \ {}
 + (\lambda_{k_{1}+1}-\alpha ) (y\sp{(k_{1}+1)} + \zeta  \unitvecfirst) 
 + \sum_{k=k_{1}+2}\sp{l} \lambda_{k} (y\sp{(k)} + \zeta  \unitvecfirst) .
\label{Case2xyz}
\end{align}
Since 
$0 \leq \alpha \leq \lambda_{k_{1}+1}$
and 
$\displaystyle 
 \sum_{k=1}\sp{k_{1}}  \lambda_{k}  + \alpha + (\lambda_{k_{1}+1}-\alpha ) 
 + \sum_{k=k_{1}+2}\sp{l} \lambda_{k} =1$,
(\ref{Case2xyz}) represents $x$ as a convex combination of 
\begin{align}
&
y\sp{(k)} + (\zeta +1) \unitvecfirst \ \ (k=1,\ldots, k_{1}),
\quad
y\sp{(k_{1}+1)} + (\zeta +1) \unitvecfirst,
\label{Case2MinkowVector1}
 \\ &
y\sp{(k_{1}+1)} + \zeta  \unitvecfirst,
\label{Case2MinkowVector2}
 \\ &
y\sp{(k)} + \zeta  \unitvecfirst \ \ (k=k_{1}+2,\ldots, l),
\label{Case2MinkowVector3}
\end{align}
where the vectors in (\ref{Case2MinkowVector3}) 
are missing when $k_{1}=l-1$.
Then $x \in \overline{(S+B) \cap N(x)}$ follows from Claim 2 below.

Claim 2:
All vectors in (\ref{Case2MinkowVector1})--(\ref{Case2MinkowVector3}) 
belong to $(S+B) \cap N(x)$.
\\
(Proof)
Since $y\sp{(k)} \in S$ and 
$\{ \zeta \unitvecfirst, (\zeta +1) \unitvecfirst \} \subseteq B$
as a consequence of $\beta > 0$,
all the vectors in (\ref{Case2MinkowVector1})--(\ref{Case2MinkowVector3}) 
belong to $S+B$.
By $N(x) \supseteq   N(y) + (\zeta +1)  \unitvecfirst $,
the vectors in  (\ref{Case2MinkowVector1}) obviously belong to $N(x)$.
The vectors in 
(\ref{Case2MinkowVector2}) and
(\ref{Case2MinkowVector3}) 
 also belong to $N(x)$
since $x=y+(\zeta + \beta, 0,\ldots,0)$ and 
\[
y_1\sp{(k)} +\zeta = \lceil x_1 \rceil -1,\quad
y_i\sp{(k)} =x_i \quad(i\neq 1)
\]
for $k=k_0+1,k_0+2,\ldots, l$
by (\ref{Case2yku}) and $k_{1} \geq k_{0}$.
Thus Claim 2 is proved.

In both cases, Case 1 and Case 2, 
we have arrived at
$x \in \overline{(S+B) \cap N(x)}$.
This completes the proof of Lemma~\ref{LMicsetline}.
\end{proof}

\subsection{Proof of Theorem~\ref{THicsepconvol}}
\label{SCprficsepconvol}

In this section we prove 
Theorem~\ref{THicsepconvol}
that the convolution $f \convZ \varphi$ 
of an integrally convex function $f$
and a separable convex function $\varphi$
is integrally convex.

The proof consists of two steps.
In Step~1, we prove integral convexity of 
$g = f \convZ \varphi$ 
when $\dom f$ and $\dom \varphi$ are bounded.
In Step~2, we cope with the general case
by considering sequences $\{ f_{k} \}$ and $\{ \varphi_{k} \}$,
with $\dom f_{k}$ and $\dom \varphi_{k}$ bounded, that converge
to $f$ and $\varphi$,  respectively.
We first note that $\dom g$ is an integrally convex set
by Theorem~\ref{THicsetbox},
since
$\dom g = \dom f + \dom \varphi$,
in which
$\dom f$ is an integrally convex set
and $\dom \varphi$ is an integer interval.

Step~1:
We assume that 
$\dom f$ and $\dom \varphi$ are bounded.
Then $\dom g = \dom f + \dom \varphi$ is also bounded.
Let $p \in \RR\sp{n}$ and note
\begin{align*}
g[-p]  &=  f[-p] \convZ \varphi[-p],  \\
\argmin g[-p] &= \argmin f[-p] + \argmin \varphi[-p] .
\end{align*}
Since
 $\argmin f[-p]$ is an integrally convex set
by Theorem~\ref{THintconvfnsetchar} (``only if'')
and 
$\argmin \varphi[-p]$ is an integer interval,
$\argmin g[-p]$ is an integrally convex set by 
Theorem~\ref{THicsetbox}.
Then Theorem~\ref{THintconvfnsetchar} (``if'')
shows that $g$ is an integrally convex function.

Step~2:
For $k=1,2,\ldots$, let $f_{k}$ denote the function
obtained from $f$ by restricting the effective domain to
the integer interval 
$\{ x  \in \ZZ\sp{n} \mid \| x \|_{\infty} \leq k \}$;
define $\varphi_{k}$ from $\varphi$ in a similar manner.
For each $k$, 
$f_{k}$ is an integrally convex function by Theorem~\ref{THfavtarProp33}
and $\varphi_{k}$ is a separable convex function,
both with bounded effective domains%
\footnote{
We only need to consider sufficiently large $k$
for which $\dom f_{k}$ and $\dom \varphi_{k}$ are nonempty. 
}, 
and therefore
$g_{k}  =  f_{k} \convZ \varphi_{k}$ is 
integrally convex by Step~1.
By Lemma \ref{LMconvolimit} below,
$\{ g_{k}(x) \}_{k}$ converges to $g(x)$
for each $x \in \dom g$.
Therefore $g$ is integrally convex since the limit of 
integrally convex functions is integrally convex by Lemma \ref{LMicflimit} below.

\begin{lemma} \label{LMconvolimit}
For each $x \in \dom g$,
the sequence $\{ g_{k}(x) \}_{k}$
is nonincreasing and converges to $g(x)$. 
\end{lemma}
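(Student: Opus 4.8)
The plan is to show that the restriction operation and the convolution operation interact well enough that shrinking the effective domains to bounded boxes can only decrease the convolution value, and that the values stabilize in the limit. Recall that $g = f \convZ \varphi$ with $g(x) = \inf\{ f(y) + \varphi(z) \mid x = y+z \}$, while $g_{k} = f_{k} \convZ \varphi_{k}$ where $f_{k}$ and $\varphi_{k}$ are the restrictions of $f$ and $\varphi$ to the box $\{ w \in \ZZ\sp{n} \mid \| w \|_{\infty} \leq k \}$.

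First I would establish monotonicity. Since $\dom f_{k} \subseteq \dom f_{k+1}$ and $\dom \varphi_{k} \subseteq \dom \varphi_{k+1}$ (both inclusions hold because the defining boxes are nested), any pair $(y,z)$ with $x = y+z$ that is feasible in the infimum defining $g_{k}(x)$ --- i.e.\ with $y \in \dom f_{k}$, $z \in \dom \varphi_{k}$ --- is also feasible for $g_{k+1}(x)$, and the objective value $f_{k}(y) + \varphi_{k}(z) = f(y) + \varphi(z)$ is unchanged. Hence the infimum over the larger feasible set is no larger: $g_{k+1}(x) \leq g_{k}(x)$. The same comparison against the full problem gives $g_{k}(x) \geq g(x)$ for every $k$, so $\{ g_{k}(x) \}_{k}$ is a nonincreasing sequence bounded below by $g(x)$, and therefore it converges to some limit $L(x) \geq g(x)$.

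It remains to prove $L(x) = g(x)$ for $x \in \dom g$. Fix such an $x$ and take any $\varepsilon > 0$. By definition of $g(x)$ as an infimum (which is finite, since $x \in \dom g = \dom f + \dom \varphi$ and the convolution is assumed bounded below), there exist $y \in \dom f$ and $z \in \dom \varphi$ with $x = y + z$ and $f(y) + \varphi(z) \leq g(x) + \varepsilon$. Now choose $k$ large enough that $\| y \|_{\infty} \leq k$ and $\| z \|_{\infty} \leq k$; this is possible because $y$ and $z$ are fixed integer vectors. Then $y \in \dom f_{k}$ and $z \in \dom \varphi_{k}$, so this pair is feasible in the infimum defining $g_{k}(x)$, giving $g_{k}(x) \leq f(y) + \varphi(z) \leq g(x) + \varepsilon$. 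Passing to the limit, $L(x) \leq g(x) + \varepsilon$, and since $\varepsilon > 0$ is arbitrary, $L(x) \leq g(x)$. Combined with $L(x) \geq g(x)$ from the monotonicity step, this yields $L(x) = g(x)$, completing the proof.

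The argument is essentially routine; the only point requiring a moment's care is the interplay of the two infimum problems --- one must verify that the feasible region for $g_{k}(x)$ genuinely sits inside that for $g_{k+1}(x)$ and for $g(x)$, and that the objective is literally unchanged on the overlap (not merely bounded), which is what makes the $\varepsilon$-optimal pair transfer cleanly once $k$ is large. There is no subtle obstacle here; the lemma is a standard ``exhaustion by bounded restrictions'' fact, and the main work of the theorem was already done in Step~1 via Theorem~\ref{THintconvfnsetchar} and Theorem~\ref{THicsetbox}.
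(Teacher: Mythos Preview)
Your proof is correct and follows essentially the same approach as the paper's: both establish that $\{g_{k}(x)\}_{k}$ is nonincreasing and bounded below, then use an $\varepsilon$-optimal decomposition $x=y+z$ together with a sufficiently large $k$ to squeeze the limit down to $g(x)$. Your version is slightly more explicit about the monotonicity step and bounds the sequence below by $g(x)$ directly rather than invoking the standing assumption on the convolution, but the argument is otherwise identical.
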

\begin{proof}
Since
\begin{align*} \label{gphiconvoldef2}
g_{k}(x)  &=  
  \inf\{ f_{k}(y) + \varphi_{k}(z) \mid    x= y + z  \}
\\
  &=  
  \inf\{ f(y) + \varphi(z) \mid
      x= y + z, \,  \| y \|_{\infty} \leq k, \, \| z \|_{\infty} \leq k
 \} ,
\end{align*}
the sequence $\{ g_{k}(x) \}_{k}$
is obviously nonincreasing, while it is bounded from below by our 
standing assumption stated at the beginning of Section~\ref{SCconvol}.
Therefore, the limit $\lim_{k \to \infty} g_{k}(x)$ exists.
The limit is obviously equal to $g(x)$, but we give a formal proof for completeness.
Let $\varepsilon > 0$ be any positive number.
By the definition of $g = f \convZ \varphi$,
there exist $y_{\varepsilon}$ and $z_{\varepsilon}$ in $\ZZ\sp{n}$ for which
$f(y_{\varepsilon}) + \varphi(z_{\varepsilon}) \leq g(x) + \varepsilon$.
Let $k_{0}=\max ( \| y_{\varepsilon} \|_{\infty}, \| z_{\varepsilon} \|_{\infty} )$.
For any $k \geq k_{0}$, we have
$g_{k}(x) \leq f(y_{\varepsilon}) + \varphi(z_{\varepsilon})$.
Therefore,
$g_{k}(x) \leq g(x) + \varepsilon$.
This shows that $\lim_{k \to \infty} g_{k}(x) = g(x)$.
\end{proof}

\begin{lemma} \label{LMicflimit}
If a sequence of integrally convex functions
$g_{k}: \mathbb{Z}^{n} \to \mathbb{R} \cup \{ +\infty  \}$
converges pointwise to a function 
$g: \mathbb{Z}^{n} \to \mathbb{R} \cup \{ +\infty  \}$
with an integrally convex effective domain,
then $g$ is also integrally convex.
\end{lemma}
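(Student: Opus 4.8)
The plan is to reduce the statement, via the local characterization in Theorem~\ref{THfavtarProp33}, to a lower-semicontinuity property of the local convex extension under pointwise limits. Since $g$ is assumed to have an integrally convex effective domain, Theorem~\ref{THfavtarProp33} says it suffices to verify~(\ref{intcnvconddist2}): for all $x,y \in \ZZ\sp{n}$ with $\|x-y\|_{\infty}=2$ one has $\tilde{g}((x+y)/2) \le \frac12(g(x)+g(y))$. If $g(x)=+\infty$ or $g(y)=+\infty$ this is trivial, so assume $x,y \in \dom g$ and set $z=(x+y)/2$. Applying Theorem~\ref{THfavtarProp33}(b) to each integrally convex $g_{k}$ gives $\tilde{g}_{k}(z) \le \frac12(g_{k}(x)+g_{k}(y))$ for every $k$, where $\tilde{g}_{k}$ denotes the local convex extension of $g_{k}$ (here $\dom g_{k}$ is integrally convex because $g_{k}$ is). Since $g_{k}(x)\to g(x)$ and $g_{k}(y)\to g(y)$, taking $\liminf$ over $k$ shows that the whole statement follows once we prove
\[
\tilde{g}(z) \;\le\; \liminf_{k\to\infty}\tilde{g}_{k}(z).
\]

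To prove this I would pass to a subsequence (not relabeled) along which $\tilde{g}_{k}(z)$ converges to $L:=\liminf_{k}\tilde{g}_{k}(z)$; the inequality above already shows $L\le\frac12(g(x)+g(y))<+\infty$. For each such $k$ pick optimal coefficients $(\lambda\sp{(k)}_{w})_{w\in N(z)}\in\Lambda(z)$ realizing $\tilde{g}_{k}(z)$, so $\sum_{w}\lambda\sp{(k)}_{w}w=z$ and $\tilde{g}_{k}(z)=\sum_{w}\lambda\sp{(k)}_{w}g_{k}(w)$; by compactness of the simplex $\Lambda(z)$ we may assume in addition that $\lambda\sp{(k)}_{w}\to\lambda\sp{*}_{w}$ for each $w\in N(z)$, so that $(\lambda\sp{*}_{w})\in\Lambda(z)$ and $\sum_{w}\lambda\sp{*}_{w}w=z$. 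Partition $N(z)=P\cup Q$ with $P=\{w\in N(z):g(w)<+\infty\}$ and $Q=\{w\in N(z):g(w)=+\infty\}$.

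The crux is to show $\lambda\sp{*}_{w}=0$ for every $w\in Q$. On the finite set $P$ each sequence $\{g_{k}(w)\}_{k}$ converges to a finite limit, hence is bounded below, so there is a constant $L_{0}\ge 0$ with $g_{k}(w)\ge -L_{0}$ for all $w\in P$ and all $k$; meanwhile $m_{k}:=\min_{w\in Q}g_{k}(w)\to+\infty$, the case $Q=\emptyset$ being trivial. Then for large $k$, using $\sum_{w\in P}\lambda\sp{(k)}_{w}\le 1$ and $m_{k}\ge 0$,
\[
\tilde{g}_{k}(z) \;\ge\; -L_{0}\;+\;m_{k}\sum_{w\in Q}\lambda\sp{(k)}_{w},
\]
and since the left-hand side tends to the finite value $L$, we get $\sum_{w\in Q}\lambda\sp{(k)}_{w}\to 0$, whence $\lambda\sp{*}_{w}=0$ on $Q$. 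Thus $(\lambda\sp{*}_{w})$ expresses $z$ as a convex combination of points of $P\subseteq\dom g$, giving $\tilde{g}(z)\le\sum_{w\in P}\lambda\sp{*}_{w}g(w)$. Finally, $\sum_{w\in P}\lambda\sp{(k)}_{w}g_{k}(w)=\tilde{g}_{k}(z)-\sum_{w\in Q}\lambda\sp{(k)}_{w}g_{k}(w)\le\tilde{g}_{k}(z)$ for large $k$ (the subtracted terms are nonnegative once $m_{k}\ge 0$), and letting $k\to\infty$ in this finite sum over $P$ yields $\sum_{w\in P}\lambda\sp{*}_{w}g(w)\le L$. Combining, $\tilde{g}(z)\le L\le\frac12(g(x)+g(y))$, which is~(\ref{intcnvconddist2}).

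I expect the real obstacle to be exactly this control of the set $Q$ of neighbours at which $g$ jumps to $+\infty$: pointwise convergence by itself does not give $\tilde{g}_{k}(z)\to\tilde{g}(z)$, and one must rule out that positive mass of the optimal convex combinations persists, in the limit, on such a vertex. The combination of a uniform lower bound for $g_{k}$ on the finite set $P$ and $\min_{w\in Q}g_{k}(w)\to+\infty$ is what forces that mass to vanish; the remaining ingredients---the reduction through Theorem~\ref{THfavtarProp33}, the compactness extraction on $\Lambda(z)$, and passing to the limit in a finite sum---are routine.
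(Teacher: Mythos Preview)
Your proof is correct. Both you and the paper reduce, via Theorem~\ref{THfavtarProp33}, to the inequality $\tilde g(z)\le\frac12(g(x)+g(y))$ at $z=(x+y)/2$ and use $\tilde g_{k}(z)\le\frac12(g_{k}(x)+g_{k}(y))$; the difference is in how the passage to the limit is justified. The paper observes that $\tilde g_{k}(z)$ equals $\min_{\Delta}\tilde g_{k}^{\Delta}(z)$ over the finitely many simplices $\Delta$ with vertices in $N(z)$, where $\tilde g_{k}^{\Delta}(z)$ is the linear interpolation of $g_{k}$ on $\Delta$. Because the barycentric coefficients of $z$ in each $\Delta$ are fixed (independent of $k$), pointwise convergence of $g_{k}$ gives $\tilde g_{k}^{\Delta}(z)\to\tilde g^{\Delta}(z)$ for every $\Delta$, and the minimum of finitely many convergent extended-real sequences converges to the minimum; hence in fact $\tilde g_{k}(z)\to\tilde g(z)$, not merely a $\liminf$ bound. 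Your argument instead extracts a convergent subsequence of optimal coefficient vectors in the compact simplex $\Lambda(z)$ and explicitly forces the limiting mass on $Q=\{w:g(w)=+\infty\}$ to vanish, yielding only the lower-semicontinuity inequality $\tilde g(z)\le\liminf_{k}\tilde g_{k}(z)$, which is all that is needed. The paper's route is shorter because the simplicial decomposition makes the coefficients rigid and sidesteps the mass-escape issue entirely; your route is more robust in spirit (it would adapt to settings without a convenient simplicial structure) and makes the role of the $+\infty$ vertices completely transparent.
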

\begin{proof}
By Theorem~\ref{THfavtarProp33} it suffices to show
\begin{equation} \label{floccv}
\tilde{g}(u) 
\leq \frac{1}{2} (g(x\sp{(1)}) + g(x\sp{(2)}))
\end{equation} 
for any $x\sp{(1)}, x\sp{(2)} \in \dom g$ with $\| x\sp{(1)} - x\sp{(2)} \|_{\infty} =2$,
where $u = (x\sp{(1)} + x\sp{(2)})/2$.
For each $k$ we have
\begin{equation} \label{fkloccv}
\tilde{g}_{k} ( u ) 
\leq \frac{1}{2} (g_{k}(x\sp{(1)}) + g_{k}(x\sp{(2)}))
\end{equation} 
by Theorem~\ref{THfavtarProp33}.
Recall that the local convex extension $\tilde{g}_{k}(u)$ is defined as
\begin{equation*}
 \tilde g_{k}(u) = 
  \min\{ \sum_{y \in N( u )} \lambda_{y} g_{k}(y) \mid
      \sum_{y \in N( u )} \lambda_{y} y = u ,  \ 
  (\lambda_{y})  \in \Lambda( u ) \} ,
\end{equation*} 
where the integral neighborhood $N(u)$ is independent of $k$.
For each simplex $\Delta$ with vertices taken from $N(u)$,
consider the linear interpolation of $g_{k}$ on $\Delta$ 
and denote its value at $u$ 
by
$\tilde g_{k}\sp{\Delta}(u)$,
where 
$\tilde g_{k}\sp{\Delta}(u) = +\infty$
if $u \not\in \Delta$.
Then 
$\tilde g_{k}(u)$ is equal to the minimum of 
$\tilde g_{k}\sp{\Delta}(u)$
over all $\Delta$, which are finite in number.
By defining $\tilde g\sp{\Delta}(u)$ from $g$ in a similar manner 
we have
$\displaystyle 
\tilde g(u) = \min_{\Delta} \tilde g\sp{\Delta}(u)$.
The pointwise convergence of 
$g_{k}$ to $g$ implies
$\displaystyle 
\lim_{k \to \infty} \tilde g_{k}\sp{\Delta}( u ) 
=  \tilde g\sp{\Delta}(u)$
for each $\Delta$, and hence
\[
 \lim_{k \to \infty} \tilde g_{k}( u ) 
 = \lim_{k \to \infty} \min_{\Delta} \tilde g_{k}\sp{\Delta}(u)
 = \min_{\Delta} \lim_{k \to \infty}  \tilde g_{k}\sp{\Delta}(u)
 = \min_{\Delta} \tilde g\sp{\Delta}(u)
 = \tilde g( u ).
\]
Therefore, (\ref{floccv}) follows from (\ref{fkloccv})
by letting $k \to \infty$.
\end{proof}

\subsection{Proof of Theorem~\ref{THlsepconvol} by discrete midpoint convexity}
\label{SClnatsepconvol}

In this section we prove the following proposition.
This serves as an alternative proof for Theorem~\ref{THlsepconvol}.

\begin{proposition} \label{PRmidptsepconvol}
If $f$ satisfies 
discrete midpoint convexity {\rm (\ref{lnatfmidconv})} for all $x,y \in \ZZ^{n}$,
so does its convolution $f \convZ \varphi$
with a separable convex function $\varphi$.
\end{proposition}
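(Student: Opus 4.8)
The plan is to reduce the statement to a one-dimensional convolution and then carry out a direct midpoint-splitting argument, loosely analogous to the proof of Theorem~\ref{THsdicvproj} but keeping the convolution variable separate.

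\emph{Reduction.} Write $\varphi(z)=\sum_{i=1}^{n}\varphi_{i}(z_{i})$, and for $i=1,\dots,n$ let $\hat\varphi_{i}\colon\ZZ^{n}\to\RR\cup\{+\infty\}$ be the function supported on the $i$-th coordinate axis with $\hat\varphi_{i}(t\,\unitvec{i})=\varphi_{i}(t)$ for $t\in\ZZ$. Since the only way to write $z=\sum_{i}w^{(i)}$ with each $w^{(i)}$ on the $i$-th axis is $w^{(i)}=z_{i}\unitvec{i}$, we get $\hat\varphi_{1}\convZ\cdots\convZ\hat\varphi_{n}=\varphi$, hence, by associativity of $\convZ$,
\[
 f\convZ\varphi=(\cdots((f\convZ\hat\varphi_{1})\convZ\hat\varphi_{2})\cdots)\convZ\hat\varphi_{n},
\]
in which $f\convZ\hat\varphi_{i}$ is the one-dimensional convolution $x\mapsto\inf_{t\in\ZZ}\{f(x-t\,\unitvec{i})+\varphi_{i}(t)\}$ (the intermediate convolutions are again bounded below, as is easily checked). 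Thus, after a relabeling of coordinates, it suffices to prove: if $f$ satisfies (\ref{lnatfmidconv}) for all $x,y\in\ZZ^{n}$ and $\psi\colon\ZZ\to\RR\cup\{+\infty\}$ is univariate convex, then
\[
 g(x):=\inf_{t\in\ZZ}\{f(x-t\,\unitvecfirst)+\psi(t)\}\qquad(\unitvecfirst=\unitvec{1})
\]
again satisfies (\ref{lnatfmidconv}) for all $x,y$; applying this $n$ times finishes the proof. No separate treatment of $\dom g$ is needed, since (\ref{lnatfmidconv}) for all pairs automatically forces the effective domain to be L$^{\natural}$-convex.

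\emph{One-dimensional case.} Fix $p,q\in\ZZ^{n}$; we may assume $p,q\in\dom g$. Given $\varepsilon>0$, choose $u,v\in\ZZ^{n}$ and $s,t\in\ZZ$ with $u+s\,\unitvecfirst=p$, $v+t\,\unitvecfirst=q$, $f(u)+\psi(s)\le g(p)+\varepsilon$ and $f(v)+\psi(t)\le g(q)+\varepsilon$. Put $u'=\lceil(u+v)/2\rceil$, $v'=\lfloor(u+v)/2\rfloor$ and
\[
 s'=\Bigl\lceil\frac{u_{1}+v_{1}+s+t}{2}\Bigr\rceil-\Bigl\lceil\frac{u_{1}+v_{1}}{2}\Bigr\rceil,
 \qquad
 t'=\Bigl\lfloor\frac{u_{1}+v_{1}+s+t}{2}\Bigr\rfloor-\Bigl\lfloor\frac{u_{1}+v_{1}}{2}\Bigr\rfloor .
\]
A coordinatewise check (coordinate $1$ uses the definitions of $s',t'$; the others use $\lceil\cdot\rceil,\lfloor\cdot\rfloor$ directly) shows $u'+s'\,\unitvecfirst=\lceil(p+q)/2\rceil$ and $v'+t'\,\unitvecfirst=\lfloor(p+q)/2\rfloor$, so $g(\lceil(p+q)/2\rceil)\le f(u')+\psi(s')$ and $g(\lfloor(p+q)/2\rfloor)\le f(v')+\psi(t')$. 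Discrete midpoint convexity of $f$ applied to the pair $(u,v)$ gives $f(u')+f(v')\le f(u)+f(v)$. Moreover $s'+t'=s+t$ and $|s'-t'|\le|s-t|$ (in fact $|s'-t'|\le1$), so $\psi(s')+\psi(t')\le\psi(s)+\psi(t)$ by convexity of $\psi$. Combining,
\[
 g\Bigl(\bigl\lceil\tfrac{p+q}{2}\bigr\rceil\Bigr)+g\Bigl(\bigl\lfloor\tfrac{p+q}{2}\bigr\rfloor\Bigr)
 \le f(u')+\psi(s')+f(v')+\psi(t')
 \le f(u)+\psi(s)+f(v)+\psi(t)
 \le g(p)+g(q)+2\varepsilon,
\]
and letting $\varepsilon\downarrow0$ yields (\ref{lnatfmidconv}) for $g$.

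I expect the main obstacle to be choosing the split correctly. The naive choice---pairing a rounding-up of $u+v$ on the $f$-side with a rounding-up of $s+t$ on the $\psi$-side---fails in coordinate $1$ by a parity-induced $\pm1$ precisely when $u_{1}+v_{1}$ and $s+t$ are both odd; then the two resulting points are \emph{not} $\lceil(p+q)/2\rceil$ and $\lfloor(p+q)/2\rfloor$. The definitions of $s'$ and $t'$ above are exactly what absorbs this discrepancy, and one must verify that after this correction $s'$ and $t'$ are still close enough to $(s+t)/2$ for the one-dimensional convexity of $\psi$ to apply; granting that, the remainder is routine bookkeeping.
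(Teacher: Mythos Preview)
Your proof is correct and rests on the same key device as the paper's: define the $\varphi$-side split as the difference of matching roundings (your $s'=\lceil(p_{1}+q_{1})/2\rceil-\lceil(u_{1}+v_{1})/2\rceil$, $t'=\lfloor(p_{1}+q_{1})/2\rfloor-\lfloor(u_{1}+v_{1})/2\rfloor$ is exactly the paper's $z'_{i}=\lceil(x^{(1)}_{i}+x^{(2)}_{i})/2\rceil-\lceil(y^{(1)}_{i}+y^{(2)}_{i})/2\rceil$, $z''_{i}$ analogously), so that discrete midpoint convexity of $f$ can be combined with univariate convexity of each $\varphi_{i}$. The only difference is organizational: the paper handles all $n$ coordinates simultaneously and then verifies $\varphi_{i}(z^{(1)}_{i})+\varphi_{i}(z^{(2)}_{i})\ge\varphi_{i}(z'_{i})+\varphi_{i}(z''_{i})$ componentwise via the bounds $\min(z^{(1)}_{i},z^{(2)}_{i})\le z'_{i},z''_{i}\le\max(z^{(1)}_{i},z^{(2)}_{i})$, whereas your preliminary reduction to iterated one-dimensional convolutions $f\convZ\hat\varphi_{i}$ carries out that same componentwise step one coordinate at a time and lets you use the cleaner bound $|s'-t'|\le 1$ instead.
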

\begin{proof}
Let
$x\sp{(1)},x\sp{(2)} \in \dom g$,
and take any 
$\varepsilon > 0$.
There exist
$y\sp{(i)},z\sp{(i)}$ $(i=1,2)$ such that
\begin{align}  
g(x\sp{(i)}) & \geq f(y\sp{(i)}) + \varphi(z\sp{(i)}) - \varepsilon  ,
\quad
x\sp{(i)}= y\sp{(i)}+z\sp{(i)}
\qquad (i=1,2) .
\label{lnatprf1and2} 
\end{align}
By discrete midpoint convexity (\ref{lnatfmidconv}) of $f$ we have
\begin{equation}  \label{lnatprf3} 
 f(y\sp{(1)}) + f(y\sp{(2)}) \geq
    f \left(\left\lceil 
          \frac{y\sp{(1)}+y\sp{(2)}}{2} 
    \right\rceil\right) 
  + f \left(\left\lfloor 
          \frac{y\sp{(1)}+y\sp{(2)}}{2} 
     \right\rfloor\right) .
\end{equation}
Define
\begin{equation}  \label{lnatprf4} 
 z' = 
\left\lceil 
     \frac{x\sp{(1)}+x\sp{(2)}}{2} 
\right\rceil
- \left\lceil 
     \frac{y\sp{(1)}+y\sp{(2)}}{2} 
\right\rceil ,
\quad
 z'' = 
\left\lfloor
     \frac{x\sp{(1)}+x\sp{(2)}}{2} 
\right\rfloor
- \left\lfloor 
     \frac{y\sp{(1)}+y\sp{(2)}}{2} 
\right\rfloor .
\end{equation}
By the definition of $f \convZ \varphi = g$ we have
\begin{align} 
 \label{lnatprf6u} 
    f \left(\left\lceil 
          \frac{y\sp{(1)}+y\sp{(2)}}{2} 
    \right\rceil\right) 
  + \varphi(z')
 &\geq
    g \left(\left\lceil 
          \frac{x\sp{(1)}+x\sp{(2)}}{2} 
    \right\rceil\right) ,
\\
    f \left(\left\lfloor 
          \frac{y\sp{(1)}+y\sp{(2)}}{2} 
    \right\rfloor\right) 
  + \varphi(z'')
 &\geq
    g \left(\left\lfloor 
          \frac{x\sp{(1)}+x\sp{(2)}}{2} 
    \right\rfloor\right) .
 \label{lnatprf6d} 
\end{align}
For $\varphi(z) = \sum_{i=1}\sp{n} \varphi_{i}(z_{i})$,
on the other hand, we can show (see below) 
\begin{equation}  \label{lnatprf11} 
\varphi_{i}(z\sp{(1)}_{i}) + \varphi_{i}(z\sp{(2)}_{i})
\geq \varphi_{i}(z'_{i})  + \varphi_{i}(z''_{i})
\end{equation}
for $i=1,2,\ldots, n$, from which follows
\begin{equation}  \label{lnatprf5} 
\varphi(z\sp{(1)}) + \varphi(z\sp{(2)})
\geq \varphi(z')  + \varphi(z'') .
\end{equation}
By adding 
(\ref{lnatprf1and2}),
(\ref{lnatprf3}),
(\ref{lnatprf6u}),
(\ref{lnatprf6d}),
and
(\ref{lnatprf5}),
we obtain
\begin{equation}  \label{lnatprf7} 
 g(x\sp{(1)}) + g(x\sp{(2)}) \geq
    g \left(\left\lceil 
          \frac{x\sp{(1)}+x\sp{(2)}}{2} 
    \right\rceil\right) 
  + g \left(\left\lfloor 
          \frac{x\sp{(1)}+x\sp{(2)}}{2} 
     \right\rfloor\right) 
 - 2 \varepsilon.
\end{equation}
This implies discrete midpoint convexity (\ref{lnatfmidconv}) for $g$, 
since $\varepsilon > 0$ is arbitrary.

It remains to prove (\ref{lnatprf11}).
First we note a simple consequence of the convexity of $\varphi_{i}$.
Let $a$ and $b$ be integers with $a \leq b$,
and $p, q \in \ZZ$.
 If 
(i) $a + b = p+q$, 
(ii) $a \leq p \leq b$, and 
(iii) $a \leq q \leq b$, then
$\varphi_{i}(a) + \varphi_{i}(b) \geq \varphi_{i}(p) + \varphi_{i}(q)$.
Therefore the proof of (\ref{lnatprf11}) is reduced to showing the following:
\begin{align}  
&z\sp{(1)}_{i} + z\sp{(2)}_{i} = z'_{i} + z''_{i} ,
\label{lnatprf8} 
\\
&\min (z\sp{(1)}_{i},z\sp{(2)}_{i})
\leq z'_{i} \leq
\max (z\sp{(1)}_{i},z\sp{(2)}_{i}),
\label{lnatprf9} 
\\ 
&\min (z\sp{(1)}_{i},z\sp{(2)}_{i})
\leq z''_{i} \leq
\max (z\sp{(1)}_{i},z\sp{(2)}_{i}) .
\label{lnatprf10} 
\end{align}
The first equation (\ref{lnatprf8})
is a consequence of the identity
$\lceil \xi/2 \rceil + \lfloor  \xi/2 \rfloor = \xi$
valid for any $\xi \in \ZZ$ as follows:
\begin{align*}  
 z_{i}' + z_{i}'' 
&= 
\left(
\left\lceil 
     \frac{x_{i}\sp{(1)}+x_{i}\sp{(2)}}{2} 
\right\rceil
- \left\lceil 
     \frac{y_{i}\sp{(1)}+y_{i}\sp{(2)}}{2} 
\right\rceil 
\right)
+ 
\left(
\left\lfloor
     \frac{x_{i}\sp{(1)}+x_{i}\sp{(2)}}{2} 
\right\rfloor
- \left\lfloor 
     \frac{y_{i}\sp{(1)}+y_{i}\sp{(2)}}{2} 
\right\rfloor
\right)
\\ &=
\left(
\left\lceil 
     \frac{x_{i}\sp{(1)}+x_{i}\sp{(2)}}{2} 
\right\rceil
+\left\lfloor
     \frac{x_{i}\sp{(1)}+x_{i}\sp{(2)}}{2} 
\right\rfloor
\right)
- 
\left(
\left\lceil 
     \frac{y_{i}\sp{(1)}+y_{i}\sp{(2)}}{2} 
\right\rceil 
+ 
\left\lfloor 
     \frac{y_{i}\sp{(1)}+y_{i}\sp{(2)}}{2} 
\right\rfloor
\right)
\\ &=
(x_{i}\sp{(1)}+x_{i}\sp{(2)}) - (y_{i}\sp{(1)}+y_{i}\sp{(2)})
=
(x_{i}\sp{(1)}-y_{i}\sp{(1)}) + (x_{i}\sp{(2)}-y_{i}\sp{(2)})
=
z_{i}\sp{(1)} + z_{i}\sp{(2)} .
\end{align*}
To show (\ref{lnatprf9}) and (\ref{lnatprf10})
we substitute
$z_{i}\sp{(1)}+z_{i}\sp{(2)} = (x_{i}\sp{(1)}+x_{i}\sp{(2)}) - (y_{i}\sp{(1)}+y_{i}\sp{(2)})$
into
\[
 \min( z_{i}\sp{(1)}, z_{i}\sp{(2)})  \leq  \frac{z_{i}\sp{(1)}+z_{i}\sp{(2)}}{2} \leq  
  \max( z_{i}\sp{(1)}, z_{i}\sp{(2)}),
\]
to obtain
\[
 \min( z_{i}\sp{(1)}, z_{i}\sp{(2)}) +  
     \frac{y_{i}\sp{(1)}+y_{i}\sp{(2)}}{2} 
\leq  
     \frac{x_{i}\sp{(1)}+x_{i}\sp{(2)}}{2} 
 \leq  
 \max( z_{i}\sp{(1)}, z_{i}\sp{(2)}) +  
\frac{y_{i}\sp{(1)}+y_{i}\sp{(2)}}{2} .
\]
By applying 
$\left\lceil \ \cdot \ \right\rceil$
and
$\left\lfloor  \ \cdot \ \right\rfloor$,
we obtain
\begin{align*} 
& \min( z_{i}\sp{(1)}, z_{i}\sp{(2)}) +  
\left\lceil 
     \frac{y_{i}\sp{(1)}+y_{i}\sp{(2)}}{2} 
\right\rceil
\leq  
\left\lceil 
     \frac{x_{i}\sp{(1)}+x_{i}\sp{(2)}}{2} 
\right\rceil
 \leq  
 \max( z_{i}\sp{(1)}, z_{i}\sp{(2)}) +  
\left\lceil 
     \frac{y_{i}\sp{(1)}+y_{i}\sp{(2)}}{2} 
\right\rceil ,
\\ &
 \min( z_{i}\sp{(1)}, z_{i}\sp{(2)}) +  
\left\lfloor 
     \frac{y_{i}\sp{(1)}+y_{i}\sp{(2)}}{2} 
\right\rfloor
\leq  
\left\lfloor 
     \frac{x_{i}\sp{(1)}+x_{i}\sp{(2)}}{2} 
\right\rfloor
 \leq  
 \max( z_{i}\sp{(1)}, z_{i}\sp{(2)}) +  
\left\lfloor 
     \frac{y_{i}\sp{(1)}+y_{i}\sp{(2)}}{2} 
\right\rfloor ,
\end{align*} 
which are equivalent
to (\ref{lnatprf9}) and  (\ref{lnatprf10}), respectively.
\end{proof}

\section{Concluding Remarks}
\label{SCconclrem}

Besides projection and convolution, there are a number of fundamental
operations for discrete convex functions.
Here we touch upon conjugation,
restriction, and addition operations
for integrally convex functions.

For a function $f: \ZZ\sp{n} \to \RR \cup \{ +\infty \}$
with $\dom f \not= \emptyset$,
the (integer) conjugate of $f$
is the function 
$ f\sp{\bullet}: \ZZ\sp{n} \to \RR \cup \{ +\infty \}$
defined by 
\begin{equation} \label{conjvex}
 f\sp{\bullet}(p) 
 = \sup\{  \sum_{i=1}^{n} p_{i} x_{i} - f(x)     \mid x \in \ZZ\sp{n} \}
\qquad ( p \in \ZZ\sp{n}),
\end{equation}
which is a discrete version of the Fenchel--Legendre transformation.
The conjugate of an integrally convex 
(resp., globally or locally discrete midpoint convex) function
$f$  is not necessarily
integrally convex
(resp., globally or locally discrete midpoint convex).
This is shown by the following example.

\begin{example}[{\cite[Example 4.15]{MS01rel}}] \rm \label{EXconj-icf}
$ S = \{(1, 1, 0, 0), (0, 1, 1, 0), (1, 0, 1, 0), (0, 0, 0, 1)\} $ 
is obviously an integrally convex set,
as it is a subset of $\{ 0,1 \}\sp{4}$.
Accordingly, its indicator function $\delta_S: \ZZ^4 \to \{0, + \infty\}$ 
is integrally convex.
 The conjugate function $g = \delta_S^\bullet$ is given by
\[
 g(p_1, p_2, p_3, p_4) 
 = \max\{p_1 + p_2, p_2 + p_3, p_1 + p_3, p_4\} \qquad (p \in \ZZ^4).
\]
For $p =(0,0,0,0)$ and $q=(1,1,1,2)$ 
we have 
$\tilde g ((p+q)/2) > (g(p)+ g(q)) /2$,
a violation of the inequality (\ref{intcnvconddist2})
in Theorem~\ref{THfavtarProp33},
where $\tilde g$ denotes the local convex closure of $g$.
Indeed, 
$(p+q)/2 =(1/2,1/2,1/2,1)$ and 
$\tilde g ((p+q)/2) = 3/2$,
whereas $(g(p)+ g(q)) /2 = (0+2)/2 = 1$.
Hence $g$ is not integrally convex.
Moreover,  $S$ is a discrete midpoint convex set,
and the indicator function $\delta_S$ 
is globally (and hence locally) discrete midpoint convex.
Its conjugate function $g$ is not globally or locally discrete midpoint convex,
as it is not integrally convex.
\finbox
\end{example}

As is well known in convex analysis,
the operations that are conjugate to projection and convolution
are restriction and addition, respectively.
For a function
$f: \mathbb{Z}\sp{n+m} \to \mathbb{R} \cup \{ +\infty  \}$,
the restriction of $f$ to $\mathbb{Z}\sp{n}$
is the function
$g: \mathbb{Z}\sp{n} \to \mathbb{R} \cup \{ +\infty  \}$
defined by 
\begin{equation} 
 g(x) = f(x,\veczero_{m})
  \qquad (x \in \ZZ\sp{n}) ,
\label{fsetrestrict}
\end{equation}
where $\veczero_{m}$ means the zero vector in $\ZZ\sp{m}$.
It is easy to see that the restriction of an integrally convex 
(resp., globally or locally discrete midpoint convex) function
$f$  is integrally convex
(resp., globally or locally discrete midpoint convex).

The sum of integrally convex functions
is not necessarily integrally convex,
as the following example shows.
On the other hand, it is known \cite{MMTT16proxICissac,MMTT17midpt}
(and easy to see)
that the sum of globally (resp.  locally) discrete midpoint convex functions
is globally (resp.  locally) discrete midpoint convex.

\begin{example}[{\cite[Example 4.4]{MS01rel}}] \rm \label{ex:cap-ics}
Consider
$D_1  =  \{(0, 0, 0), (0, 1, 1), (1, 1, 0), (1, 2, 1)\}$
and
$D_2  =  \{(0, 0, 0), (0, 1, 0), (1, 1, 1), (1, 2, 1)\}$,
which are both integrally convex sets.
Their intersection $D_1 \cap D_2= \{(0, 0, 0), (1, 2, 1)\}$
is not an integrally convex set.
Therefore, the indicator functions 
$\delta_{D_1}, \delta_{D_2} : \ZZ^3 \to \{0, + \infty\}$ 
are integrally convex, and their sum
$\delta_{D_1} + \delta_{D_2}$
is not integrally convex.
\finbox
\end{example}

The results of this paper for convolution and projection operations 
are summarized in Table~\ref{TBconvolproj}
together with the previously known results for other discrete convex functions.

\begin{table}
\caption{Convolution and projection operations for discrete convex functions ($\varphi$: separable convex)}
\label{TBconvolproj}
\begin{center}
\begin{tabular}{c|ccc}
\hline
function class & convolution &  convol.~with sep. conv.   & projection
\\
$\mathcal{C}$ & $\mathcal{C} \convZ \mathcal{C} \subseteq \mathcal{C}$ &  
$\mathcal{C} \convZ \varphi \subseteq \mathcal{C}$ &   
${\rm proj}(\mathcal{C}) \subseteq \mathcal{C}$ 
\\
\hline
\hline
separable  
 & true & true &  true 
\\	
 convex  & (obvious) & (obvious) & (obvious) 
\\   \hline
${\rm M}^{\natural}$-convex & true & true & true 
\\   &
\cite[Th.~6.15]{Mdcasiam} & \cite[Th.~6.15]{Mdcasiam} & \cite[Th.~6.15]{Mdcasiam}
\\   \hline
${\rm L}^{\natural}$-convex & false & true & true 
\\   &
Ex.\ref{EXlnatsetsum}  & \cite[Th.~7.11]{Mdcasiam} & \cite[Th.~7.11]{Mdcasiam}
\\   \hline
integrally & false & true & true 
\\  
convex &
  Ex.\ref{EXicdim2sumhole}   &  Th.~\ref{THicsetbox}  & Th.~\ref{THicvsetproj} 
\\  \hline
globally discrete & false & false & true 
\\  
midpoint convex &
Ex.\ref{EXicdim2sumhole}  &  Ex.\ref{EXdicdim3set}, Ex.\ref{EXdicdim3fn}
 & Th.~\ref{THicvproj}
\\  \hline 
locally discrete & false & false & true 
\\   
midpoint convex & Ex.\ref{EXicdim2sumhole} & 
 Ex.\ref{EXdicdim3set}, Ex.\ref{EXdicdim3fn}
& Th.~\ref{THsdicvproj} 
\\  
\hline
\end{tabular}  
\end{center}
\end{table}

\paragraph{Acknowledgement}
The authors thank Fabio Tardella for helpful comments, especially 
for suggesting the use of projection operation described in Remark \ref{RMuseproj}.

\newpage

\end{document}